\numberwithin{equation}{section}
\numberwithin{figure}{section}
\newtheorem{theorem}{Theorem}[section]
\newtheorem{corollary}[theorem]{Corollary}
\newtheorem{definition}[theorem]{Definition}
\newtheorem{example}[theorem]{Example}
\newtheorem{lemma}{Lemma}[section]
\newtheorem{proposition}[theorem]{Proposition}
\newtheorem{remark}[theorem]{Remark}
\newcommand{\hilbertX}{{\texorpdfstring{\mathcal X}{X}}}
\newcommand{\hilbertH}{{\texorpdfstring{\mathcal H}{H}}}
\newcommand{\hilbertG}{{\texorpdfstring{\mathcal G}{G}}}
\newcommand{\multifC}{{\mathbb C}}
\newcommand{\opert}{{\mathbb T}}
\newcommand{\setD}{{\texorpdfstring{\mathcal D}{D}}}
\newcommand{\hatD}{{\texorpdfstring{\hat{\mathcal D}}{D}}}
\begin{document}
	
\title[Dynamical system related to splitting projection methods]{Dynamical system related to primal-dual splitting projection methods
}

	\subjclass[2020]{34A12, 37N40, 47J25, 34A45, 49J53}
	
\keywords{autonomous ordinary differential equations, locally Lipschitz vector field, existence and uniqueness of solutions, extendability of solutions, projected dynamical systems}
\author{
	Ewa M. Bednarczuk\texorpdfstring{$^{1,4}$}{firstsec}
}
\author{
	Raj Narayan Dhara\texorpdfstring{$^{2,5}$}{thirdfive}
}
\author{
	Krzysztof E. Rutkowski\texorpdfstring{$^{3,4}$}{fourth} 
}
\thanks{$^1$ Warsaw University of Technology, 00-662 Warszawa, Koszykowa 75,
}

\thanks{$^2$Department of Mathematics and Statistics, Masaryk University, Kotl\'a{}\v{r}sk\'a 2, 611 37 Brno, Czech Republic}

\thanks{$^3$ Cardinal Stefan Wyszy\'nski University, 01-815 Warsaw, Dewajtis 5}
\thanks{$^4$ Systems Research Institute of the Polish Academy of Sciences, Newelska 6 
}
\thanks{$^5$RND acknowledges the support of IBS PAN, Warszawa, Poland and the Czech Science Foundation, project GJ19-14413Y}

\begin{abstract}
We introduce a dynamical system to the problem of 
finding zeros of the sum of two maximally monotone operators. We investigate the existence, uniqueness and  extendability of solutions to this dynamical system in a Hilbert space. 
We prove that the trajectories of the proposed dynamical system converge strongly to a primal-dual solution of the considered problem. 
Under explicit time discretization of the dynamical system we obtain the best approximation algorithm for solving coupled monotone inclusion problem. 
\end{abstract}
\maketitle

\section{Introduction}

Let $\hilbertH$, $\hilbertG$ 
be  Hilbert spaces. We consider the problem of 
finding $p\in \hilbertH$ such that
\begin{align}\tag{P}\label{problem:composition_primal}
& 0 \in Ap+L^*BLp, %\\
%& 0 \in -LA^{-1}(-L^*v^*)+B^{-1}v^*,
\end{align}
where $ A: \hilbertH \rightarrow \hilbertH, 
\ B:\ \hilbertG \rightarrow \hilbertG$ are
maximally monotone operators,
$L: \hilbertH \rightarrow \hilbertG$ is a bounded, linear operator.
Together with problem \eqref{problem:composition_primal} we consider the dual problem formulated as finding $v^*\in \hilbertG$ such that 
\begin{equation}\tag{D}\label{problem:composition_dual}
0 \in -LA^{-1}(-L^*v^*)+B^{-1}v^*.
\end{equation}
To problems \eqref{problem:composition_primal} and \eqref{problem:composition_dual} we associate Kuhn-Tucker set defined as
\begin{equation}
\label{setZ}
\tag{Z}
Z:= \{ (p,v^*) \in \hilbertH \times \hilbertG \mid -L^* v^* \in Ap \quad \text{and}\quad Lp\in B^{-1}v^* \}.
\end{equation}

The set $Z$ is nonempty if and only if there exists a solution of the primal problem \eqref{problem:composition_primal} and to the dual problem \eqref{problem:composition_dual} (see \cite[Corollary 2.12]{pennanen1999dualization}).

Our aim in this paper is to investigate, for a given  $x_{0}, \bar{w}\in\hilbertH\times\hilbertG$,  the following dynamical system,  solution of which  asymptotically approaches  solution of \eqref{problem:composition_primal}-\eqref{problem:composition_dual},
\begin{align} \tag{S}
\begin{aligned}
\label{system:Haugazeau}
&\dot{x}(t)=Q(\bar{w},x(t),\opert x(t))-x(t),\quad t\geq 0,\\
&x(0)=x_0,
\end{aligned}
\end{align}
where $\opert :\ \hilbertH\times \hilbertG\rightarrow \hilbertH\times \hilbertG$, %$\operatorname{Fix}\opert = Z$
fixed point set of the operator $\opert$ is $Z$ ($\operatorname{Fix}\opert = Z$), with $Z$ defined by \eqref{setZ} 
and
%$x_{0}, \bar{w}\in\hilbertH\times\hilbertG$,
$Q:(\hilbertH\times \hilbertG)^{3}\to\hilbertH\times \hilbertG$,
\begin{equation}\label{eq:Haugazeau_projection}
Q(\bar{w},b,c):=P_{H(\bar{w},b)\cap H(b,c)}(\bar{w}),
\end{equation}
%\quad b,c\in \hilbertH\times \hilbertG,
is the projection $P$ of the element $\bar{w}$ onto the set $H(\bar{w},b)\cap H(b,c)$ which is the intersection of two  hyperplanes of the form
\begin{align}\label{Qblock}
\begin{aligned}
%& Q(\bar{w},b,c):=P_{H(\bar{w},b)\cap H(b,c)}(\bar{w}),\quad b,c\in \hilbertH\times \hilbertG,\\
& H(z_1,z_2):=\{ h\in \hilbertH\times \hilbertG \mid \langle h-z_2 \mid z_1-z_2 \rangle \leq 0  \},\quad z_1,z_2 \in \hilbertH\times \hilbertG.
\end{aligned}
\end{align}
In particular,
$
H(\bar{w},b)=\{ h\in \hilbertH\times \hilbertG \mid \langle h-b \mid \bar{w}-b \rangle \leq 0  \}.
$

Under explicit discretization with step size equal to one the system~\eqref{system:Haugazeau} becomes the 
best approximation algorithm  for finding fixed point of $\opert$ introduced in
\cite[Proposition 2.1]{Alotaibi_2015_best} (see also
\cite[Theorem 30.8]{MR3616647}),
\begin{equation}\label{scheme:haugazeau}
x_{n+1}=Q(\bar{w},x_n, x_{n+1/2}),\quad n\in \mathbb{N}.
\end{equation}
with the choice of $x_{n+1/2}:=\opert(x_n)$ and the  starting point $x_0$. The characteristic feature of this algorithm is the strong convergence of the sequence $x_n$ to a fixed point of $\opert$ (see also \cite{weak_to_strong_2001})  
In contrast to this, a dynamical system investigated, e.g. in \cite{Bot2020}, is related to other primal-dual method which  exhibits weak convergence.

%  \cite{Alotaibi_2015_best} for finding $(p,v^*)\in Z$.

In case when $A=\partial f$, $B=\partial g$, $f:\ \hilbertH\rightarrow \mathbb{R}\cup \{+\infty\}$, $g:\ \hilbertH\rightarrow \mathbb{R}\cup \{+\infty\}$ are proper convex, lower semicontinuous (l.s.c.) functions, the problem \eqref{problem:composition_primal} (if solvable) reduces to finding a point $p\in \hilbertH$ 
solving the following minimization problem (see \cite{pennanen_dualization})
\begin{equation}
\text{minimize}_{p\in \hilbertH} f(p)+g(Lp)
\end{equation}
and \eqref{problem:composition_dual} reduces to finding a point $v^{*}\in \hilbertG$ 
solving the following maximization problem
\begin{equation}
\text{maximize}_{v^* \in \hilbertG} -f^*(-L^*v^*)-g^*(v^*).
\end{equation}

First order dynamical systems related to optimization problems have been discussed by many authors (see, e.g., \cite{MR3193795,MR1347800,MR3612970,MR3705347,MR2028993}). In those papers, a natural assumption is that the vector field $F$ is globally Lipschitz and consequently,  the existence and uniqueness of solutions to the dynamical system is guaranteed by classical results (see e.g. \cite[Theorem 7.3]{brezis2011}).
% Abbas Attouch
For instance, Abbas, Attouch and Svaiter considered the following system in~\cite{MR3193795}
\begin{align}\label{system:Abbas}
\begin{aligned}
&\dot{x}(t)+x(t)=\operatorname{prox}_{\mu \Phi} (x(t)-\mu B(x(t))),\\
&x(0)=x_0 ,
\end{aligned}
\end{align}
where $\Phi:\ \hilbertH \rightarrow\mathbb{R}\cup \{+\infty\}$  is a proper, convex and l.s.c. function defined on a Hilbert space $\hilbertH$, $B:\ \hilbertH \rightarrow \hilbertH$ is $\beta$-cocoercive operator and $\operatorname{prox}_{\mu \Phi}:\ \hilbertH \rightarrow \hilbertH$ is a proximal operator defined as
\begin{equation*}
\operatorname{prox}_{\mu \Phi}(x)=\arg\min_{y\in \hilbertH} \{ \Phi(y)+\frac{1}{2\mu}\|x-y\|^2 \}.
\end{equation*}
% Attouch Svaiter
% \begin{align*}
%     & x(t)=J_{\mu T}(z(t))\\
%     & \dot{z}(t)=(\mu(t)-\dot{\mu}(t)) {}^{\mu(t)} T(z(t))=0,
% \end{align*}
% where ...

\noindent
% Radu Cstnek
Furthermore, Bo\c{t} and Csetnek, in~\cite{MR3612970}, studied the dynamical  system 
\begin{align}\label{system:Fixed}
\begin{aligned}
&\dot{x}(t)=\lambda(t)(T(x(t))-x(t)),\quad t\geq 0\\
& x(0)=x_0,
\end{aligned}
\end{align}
where $T:\ \hilbertH \rightarrow \hilbertH$ is a nonexpansive operator, $\lambda: [0,\infty) \rightarrow [0,1]$ is a Lebesgue measurable function. By applying in \eqref{system:Fixed} operator $T$ defined as
%By taking 
$T=J_{\gamma A}( Id- \gamma B )$, where $A:\ \hilbertH \rightarrow \hilbertH$ is a maximally monotone operator, the system~\eqref{system:Fixed},
% $B:\ \hilbertH \rightarrow \hilbertH $ is $\beta$-cocoercive, 
%authors obtain system in a form
%Bo\c{t} and Csetnek obtain in \cite{MR3612970} the system 
% \begin{align}\label{system:Bot}
% \begin{aligned}
% & \dot{x}(t)=\lambda(t)[J_{\gamma A}(x(t)-\gamma B(x(t)))-x(t)],\\
% & x(0)=x_0,
% \end{aligned}
% \end{align}
%This system,
under special discretization (see e.g.~\cite[Remark 8]{MR3612970}), 
leads to the forward-backward algorithm for solving operator inclusion problems in the form
\begin{equation*}
\text{find}\ x\in \hilbertH \quad s.t. \quad  0\in A(x)+B(x).
\end{equation*}

For other discretizations see e.g., \cite[Section 2.3]{MR2731260}.

%Let $\hilbertG$ be Hilbert space. Recall that 
%By introducing the operator $\opert:\hilbertH\times\hilbertG\rightarrow \hilbertH\times\hilbertG$,  , where
%...
%we study the following dynamical system
%...

% such that
% \begin{align*}
% & 0 \in Ax+L^*BLx, \\
% & 0 \in -LA^{-1}(-L^*v^*)+B^{-1}v^*.
% \end{align*}
The most essential difference  between \eqref{system:Haugazeau} and the systems \eqref{system:Abbas}, \eqref{system:Fixed}
%, \eqref{system:Bot} 
is that, in general, one cannot expect that the vector field $Q$ given in \eqref{system:Haugazeau} is globally Lipschitz with  respect to variable $x$ as it is  the case of dynamical systems \eqref{system:Abbas} and \eqref{system:Fixed}.% and  \eqref{system:Bot}.

%%%%%%%%%%%%%%%%%%%%%%%%%%%%%%%%%%%%%%%%%%%%%%%%%%%%%

The contribution of the present investigation is as follows.
%\begin{enumerate} 
We formulate the problem and provide preliminary facts in Sections \ref{section:formulation} and \ref{section:preliminaries}, respectively. 
In Section \ref{section:existenceanduniqueness} we prove the existence and uniqueness of solutions to dynamical system 
\eqref{system:Haugazeau} by studying a more general problem  \eqref{dynamics:DS}. 
%\eqref{dynamics:DS_2} 
% 	with continuous right-hand side vector field $F$  defined on a bounded and closed set $\hatD$, which is locally Lipschitz on $\hatD\setminus\bar{z}$, where $\bar{z}$ is the only stationary point of $F$ on $\hatD$  (Proposition \ref{proposition:existence_finite2} for the Euler method and Proposition  \ref{proposition:voltera_solution_2} for the approach via contraction mapping principle) (section \ref{section:existenceanduniqueness}).
Extendability of solutions to  dynamical system 
\eqref{dynamics:DS} is studied in Section \ref{section:extendability}.
% 	\eqref{dynamics:DS_2} satisfying the above properties, Theorem \ref {proposition_prolongation} (section \ref{section:extendability}).
The behaviour  at $+\infty$ of solutions to \eqref{dynamics:DS} is investigated in Section \ref{section:behaviour}.
In Section \ref{section:PDS} we present applications of the results obtained  for \eqref{dynamics:DS}  
to projected dynamical systems \eqref{system:PDS}. 
%which are included in Section \ref{section:PDS}.
%\end{enumerate}

\section{Formulation of the problem}\label{section:formulation}
Suppose that the set $Z$ given by \eqref{setZ} is nonempty. Then for all $x\in \hilbertH\times \hilbertG$, $Z\subset  H(x,\opert x)$. 
Let $\bar{w}\in \hilbertH\times \hilbertG$ and $\bar{z}=P_{Z}(\bar{w})$. Let us define an open ball in Hilbert space $\hilbertH\times \hilbertG$ centered at $a\in \hilbertH\times \hilbertG$ with some radius $R>0$ as follows:
\begin{equation*}
\mathbb{B}(a,R):=\{ x\in \hilbertH\times \hilbertG \mid \|a-x\|<R\}
\end{equation*}
and its closure by
\begin{equation*}
\bar{\mathbb{B}}(a,R):=\{ x\in \hilbertH\times \hilbertG \mid \|a-x\|\leq R\}.
\end{equation*}

We limit ourselves to a closed subset $\setD\subset \hilbertH\times \hilbertG$ such that for all $x \in \setD$ we have $\bar{z}\in H(\bar{w},x)$. This latter conditions ensures that $\bar{z}$ is an equilibrium point of %$Q(\bar{w},\cdot, \opert \cdot )$ and
%Observe that we have
$$Q(\bar{w},\cdot, \opert(\cdot)): %\bar{\mathbb{B}}\left(\frac{\bar{w}+\bar{z}}{2},\frac{\|\bar{w}-\bar{z}\|}{2}\right)
\setD\to \setD.
%\bar{\mathbb{B}}\left(\frac{\bar{w}+\bar{z}}{2},\frac{\|\bar{w}-\bar{z}\|}{2}\right)
$$

\noindent
The fact that
%\begin{fact}\label{fact:subset_ball}
%We have: 
%$$
\begin{equation}\label{fact:subset_ball}
x \in \bar{\mathbb{B}}\left(\frac{\bar{w}+\bar{z}}{2},\frac{\|\bar{w}-\bar{z}\|}{2}\right) \text{ if and only if   }
%\begin{equation}\label{inequality:setD_fact}
\langle  \bar{z} - x \mid \bar{w}-x \rangle \leq  0,
\end{equation}
%$$
%\end{equation}
%\end{fact}
% \begin{proof}
% 	Let $o=\frac{1}{2}(\bar{w}+\bar{z})$. 
% 	Then
% 	\begin{align*}
% 	&\|x-o\|^2-\frac{1}{4}\|\bar{z}-\bar{w}\|^2= \|\frac{1}{2}(x-\bar{w})+\frac{1}{2}(x-\bar{z})\|^2-\frac{1}{4}\|\bar{z}-x+x-\bar{w}\|^2\\
% 	&=\frac{1}{4}\|x-\bar{w}\|^2+\frac{1}{2}\langle x-\bar{w}\mid x-\bar{z}\rangle + \frac{1}{4}\|x-\bar{z}\|^2\\
% 	&-\frac{1}{4}\|\bar{z}-x\|^2-\frac{1}{2}\langle \bar{z}-x \mid x-\bar{w}\rangle - \frac{1}{4}\|x-\bar{w}\|^2\\
% 	&= \langle \bar{z}-x \mid \bar{w} -x \rangle.
% 	\end{align*}
% 	This shows that $\|x-o\|^2=\frac{1}{4}\|\bar{z}-\bar{w}\|^2+\langle \bar{z}-x \mid \bar{w} -x \rangle$, i.e. $x\in \bar{\mathbb{B}}(\frac{\bar{w}+\bar{z}}{2},\frac{\|\bar{w}-\bar{z}\|}{2})$ if and only if \eqref{inequality:setD_fact} holds.  
% \end{proof}
%Fact \ref{fact:subset_ball}
implies the following
\begin{equation*}
Z\subset H(\bar{w},x) \implies \bar{z}\in H(\bar{w},x) \iff x \in \bar{\mathbb{B}}\left(\frac{\bar{w}+\bar{z}}{2},\frac{\|\bar{w}-\bar{z}\|}{2}\right).
\end{equation*}
Therefore, we will limit our attention to  $Q(\bar{w},\cdot,\opert (\cdot))$ given by \eqref{eq:Haugazeau_projection} to be defined on $\setD\subset  \bar{\mathbb{B}}\left(\frac{\bar{w}+\bar{z}}{2},\frac{\|\bar{w}-\bar{z}\|}{2}\right)$. 

Let us note that for $x=\bar{w}$ we have $H(\bar{w},x)=\hilbertH\times \hilbertG$. This motivates us to restrict our investigations  to set $\hatD:=\setD \setminus B(\bar{w},r)$ for some $r>0$ such that $\hatD$ is nonempty.

System \eqref{system:Haugazeau} is 
%In this paper 
%We investigate 
%the existence, uniqueness and the behaviour, as $t$ goes to $+\infty$, of 
an autonomous  dynamical system  of the form
\begin{align}\tag{DS}\label{dynamics:DS_2}
\begin{aligned}
& \dot{x}(t)=F(x(t)),\quad t\geq   0,\\
& x(0)=x_0\in \hat{\setD}\setminus \{ \bar{z} \}, 
\end{aligned}	
\end{align}
% \begin{align*}
% & \dot{x}(t)=F(x(t)),\quad \text{ }t\geq t_0\geq  0,\\
% & x(t_0)=x_0\in \hatD\setminus\{\bar{z}\}, 
% \end{align*}
where $F:\ \hatD \rightarrow \hilbertX$, $\hilbertX$ - Hilbert space, is a continuous function, locally Lipschitz on $\hatD$ except a single point $\bar{z}\in\hatD$, and $\hatD$ is a closed  and  bounded  set in $\hilbertX$. Indeed, when $F(x):=Q(\bar{w},x,\opert x)-x$, where $\opert: \hilbertH\times \hilbertG\rightarrow \hilbertH\times \hilbertG$ is defined as in  \eqref{block} and $Q: (\hilbertH\times \hilbertG)^3 \rightarrow \hilbertH \times \hilbertG$ is defined in  \eqref{Qblock}, the system \eqref{dynamics:DS_2} reduces  to~\eqref{system:Haugazeau}. For other applications we refer the reader to Section \ref{section:PDS}.

%  where $C$ is a multivalued mapping, $\bar{w}\in\hilbertH\times \hilbertG$ and $P_{\multifC(x)}(\bar{w})$ is the orthogonal projection of $\bar{w}$ on $\multifC(x)$, and $\multifC(x)$ is the intersection two halfspaces
%  \begin{equation*}
%      \multifC(x)=Q(\bar{w},x,\opert x)
%  \end{equation*}
%  
%, the dynamical system \eqref{dynamics:DS_2} reduces  to~\eqref{system:Haugazeau}.
%  some algorithmic scheme for solving optimization problems. 

%given by ..., then the dynamical system is related to some schemes for iterative problems.

A survey of existing results on solvability and uniqueness of solutions 
%close to the investigated problem in the literature (
going beyond the classical Cauchy-Picard theorem
%), e.g. classical Cauchy-Problem is not applicable and may be referred to Example~\ref{ex:nonunique}.
%There are some survey results 
from finite to infinite settings journey can be found in~\cite{hajekvivi10}.

%%%%%%%%%%%%%%%%%%%%%%%%%%%%%%%%%%%%%%%%%%
Main difficulties  in investigating  the existence  to autonomous ODE in infinite-dimensional settings are due to the lack of compactness, see~\cite[Remark 5.1.1]{MR0460832}. For instance,  the continuity  of the right-hand side vector field $F$ is not enough to obtain the counterpart of  Peano's theorem in infinite-dimensional spaces~\cite{dieudonne50}, even in Hilbert spaces~\cite{yorke70}. 

%In general,
In \cite{godunov74} Godunov proved that in every infinite-dimensional Banach space there exists a continuous vector field $F$ such that there is no solution to the related \eqref{dynamics:DS_2} whereas the global Lipschitz condition, due to Cauchy-Lipschitz-Picard-Lindeloff, of the right-hand side field ensures the uniqueness and/or extendability of the solution, see~\cite[Theorem 7.3]{brezis2011}.
Some attempts to weaken the global Lipschitz condition of the right-hand side vector field have been done in the context of the existence of solutions, see, e.g., \cite[Theorem 5.1.1]{MR0460832} and ~\cite{hajek2010,li75,szep71,MR2143512} and the references therein. It is observed that the local Lipschitzness of the vector filed allows to prove the local existence and uniqueness for the related  %initial value
problems. For instance, one can adapt \cite[Theorem 5.1.1]{MR0460832} to the case of autonomous differential system in the following way
\begin{corollary}\label{theorem:existenceonrectangle}
	Define the rectangle $R_0 = \{ x \in \hilbertX \mid \|x-x_0\|\leq \beta \}$ for some $\beta>0$.
	% \begin{equation*}
	%     R_0 = \{ x \in \hilbertX \mid \|x-x_0\|\leq \beta \}.
	% \end{equation*}
	Let $f: R_0 \rightarrow \hilbertX$. Assume that $\|f(x)\| \leq \tilde{M}$ for $x\in R_0$ and $\|f(x_1)-f(x_2)\|\leq K \|x_1-x_2\|$ for $x_1,x_2\in R_0$, where $K$ and $\tilde{M}$ are nonnegative constants. Let $\alpha>0$  such that $\alpha \leq \frac{\beta}{\tilde{M}}$. Then there exists one and only one (strongly) continuously differentiable function $x(t)$ satisfying
	\begin{align*}
	& \dot{x}(t)=f(x(t)),\quad |t-t_0|\leq \alpha;\quad
	x(t_0)=x_0.
	\end{align*}
\end{corollary}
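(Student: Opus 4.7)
The plan is to convert the Cauchy problem into its Picard integral formulation and then apply the Banach fixed-point theorem. By the fundamental theorem of calculus for Bochner integrals in a Hilbert space, any continuously differentiable $x(t)$ solving $\dot{x}(t)=f(x(t))$ with $x(t_0)=x_0$ also solves the integral equation
\begin{equation*}
x(t)=x_{0}+\int_{t_{0}}^{t}f(x(s))\,ds,\qquad |t-t_{0}|\leq\alpha,
\end{equation*}
and, conversely, continuity of $f$ together with a continuous $x(\cdot)$ satisfying the above integral equation forces $x(\cdot)$ to be continuously differentiable with $\dot{x}(t)=f(x(t))$. So it suffices to produce a unique continuous solution of the integral equation taking values in $R_{0}$.

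I would work in the complete metric space $E:=\{x\in C([t_{0}-\alpha,t_{0}+\alpha];\hilbertX):\ x(t)\in R_{0}\ \text{for all }t\}$, equipped with the supremum norm; completeness follows because $R_{0}$ is closed in the Hilbert space $\hilbertX$ and uniform limits of continuous functions are continuous. Define the Picard operator
\begin{equation*}
(Tx)(t):=x_{0}+\int_{t_{0}}^{t}f(x(s))\,ds.
\end{equation*}
The first check is that $T$ maps $E$ into $E$: continuity of $Tx$ is immediate from continuity of the integrand, and the estimate $\|(Tx)(t)-x_{0}\|\leq\tilde{M}|t-t_{0}|\leq\tilde{M}\alpha\leq\beta$ uses precisely the hypothesis $\alpha\leq\beta/\tilde{M}$ so that $(Tx)(t)\in R_{0}$.

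The main obstacle is contractivity: the natural estimate under the sup norm yields a factor $K\alpha$, which need not be less than $1$ because no smallness assumption relating $K$ and $\alpha$ is imposed. I would sidestep this by equipping $E$ with the equivalent Bielecki norm
\begin{equation*}
\|x\|_{B}:=\sup_{|t-t_{0}|\leq\alpha}e^{-2K|t-t_{0}|}\,\|x(t)\|,
\end{equation*}
under which $E$ remains complete. Then, using the Lipschitz bound on $f$ and the explicit computation of $\int_{t_{0}}^{t}e^{2K|s-t_{0}|}\,ds$, one obtains $\|Tx-Ty\|_{B}\leq\tfrac{1}{2}\|x-y\|_{B}$ for all $x,y\in E$. (An equally acceptable alternative is to iterate $T$ and show by induction on $n$ that $T^{n}$ is a contraction on $E$ with constant $(K\alpha)^{n}/n!$, invoking the generalization of the Banach principle to maps whose iterate is a contraction.)

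The Banach fixed-point theorem then produces a unique $x\in E$ with $Tx=x$, which is the unique continuous solution of the integral equation with values in $R_{0}$. By the observation in the first paragraph, $x$ is continuously differentiable on $|t-t_{0}|\leq\alpha$ and satisfies $\dot{x}(t)=f(x(t))$, $x(t_{0})=x_{0}$; uniqueness among such solutions follows because any such solution automatically lies in $E$ (its range is in $R_{0}$ by the same a priori estimate using $\|f\|\leq\tilde{M}$) and solves the integral equation.
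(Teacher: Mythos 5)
Your proof is correct, and it is essentially the standard Picard--Lindel\"of argument (integral reformulation, invariance of the ball via $\tilde{M}\alpha\leq\beta$, and a contraction obtained through a Bielecki norm or iterated Picard operator). The paper does not write out a proof of this corollary at all --- it obtains it by specializing the cited non-autonomous existence theorem \cite[Theorem 5.1.1]{MR0460832} to the autonomous case, and that theorem is itself proved by exactly this successive-approximation scheme, so your argument matches the approach the paper relies on.
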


Let us note that Corollary \ref{theorem:existenceonrectangle} is non-applicable to system \eqref{dynamics:DS_2} in case when $x_0 \notin \operatorname{int} \hatD$ (see also Remark~\ref{remark:fixpointsoutside} below). Moreover, it was shown that local Lipschitzness condition is not enough to guarantee existence of trajectories on $[t_0,+\infty)$ (see e.g.,  \cite{MR2034199} and references therein). Instead of this, in Sections \ref{section:existenceanduniqueness} and \ref{section:extendability} we will be using modified standard techniques to show the existence and uniqueness of solutions to \eqref{dynamics:DS_2}. 

%, see~\cite[Theorem 5.1.1]{MR0460832}. 

In \cite{MR3443764} a smooth vector field is constructed such that the respective autonomous dynamical system has a bounded maximal solution which is not globally defined. 
%under additional assumptions  authors shown existence result on some segment $[0,a]$, $a>0$.

In finite-dimensional settings, under the assumption of local Lipschitzness and some boundedness of the vector field, the existence and uniqueness of the trajectory on $[t_0,+\infty)$ are shown in \cite{XiaWang2000} by Xia and Wang. The authors applied their results to investigations of projected dynamical systems.

%%%%%%%%%%%%%%%%%%%%%%%%%%%%%%%%%%

%for non-autonomous systems in \cite[Theorem 1.1]{Stability_theory_Liapunov_Yoshizawa}
% \cite{on_the_stability_of_globally_projected_dynamical_Xia_Wang} - in finite dimensional setting under locally Lipschitz vector field with additional assumption it was shown existence and uniqueness of considered dynamical system.

% \cite{MR3443764} - shows that in Banach spaces under locally Lipschitz vector field the existence of solutions may not be ensured for $[t_0,+\infty)$, $t_0\in \mathbb{R}$. 

%Theorem 1.1 of \cite{Stability_theory_Liapunov_Yoshizawa} - local existence of solutions to non-autonomous systems in finite dimensions when $F(t,x)$ is continuous and bounded on rectangle?

% Theorem 1 of \cite{MR1762642} shows that there exists a bounded solution of a system with Lipschitz function on bounded sets

% Mentioning results of existence of solutions on some neighbourhood for locally Lipschitz functions can be ensured (see e.g. Remark 5.1.1 of \cite{MR0460832})

% \cite{MR541867} - in Banach spaces with bases, existence for nonautonomous system on segment $[0,a]$

%\cite{godunov74} - Peano's theorem in Hilbert spaces is false

%\cite{MR2143512} - literature in infinite dimensional settings - concepts of uniform continuity and continuity in the weak topology.

%\cite{hajek2010} - an example of ODE with continuous right-hand-side with no solutions

%Directional Differentiability of Metric Projections onto Moving Sets at Boundary Points ALEXANDER SHAPIRO \cite{MR935276}

%%%%%%%%%%%%%%%%%%%%%%%%%%%%%%%%%%%%

\section{Preliminaries}\label{section:preliminaries}

In this section we formulate the system \eqref{system:Haugazeau} (and \eqref{dynamics:DS_2}) in the  general  form.

Let  $\bar{w},\bar{z} \in \hilbertX$
%, $\hilbertX$ - Hilbert space with the inner product $\langle \cdot \mid \cdot \rangle$, $\bar{w}\neq \bar{z}$ and 
and the associated norm in Hilbert space $\hilbertX$ be defined as $\|\cdot \|=\sqrt{\langle \cdot \mid \cdot \rangle}$. Let $\setD\subset \hilbertX$ be a  closed convex subset of $\hilbertX$  such that $\bar{w},\bar{z}\in \setD$ and 
\begin{equation}\label{inequality:setD}
\langle  \bar{z} - x \mid \bar{w}-x \rangle \leq  0\quad  \text{for all}\ x\in \setD.
\end{equation}
Note that the condition \eqref{inequality:setD} immediately implies that $\bar{w}$ and $\bar{z}$ are boundary points of the set $\setD$.

% {\em The definition of the set $\setD$ comes from the application.
% % (see Section \ref{section:projective_dynamical_best_approximation})
%  This is also one of the main motivations that we assume $\hilbertX$ to be a Hilbert space.}\footnote{The italic phrase: we can say later after the formulation!}

Let $r$ be such that $\|\bar{w}-\bar{z}\|^2>r>0$. Throughout this paper, we consider set $\hat{\setD}$ related to $\setD$ (see Figure \ref{figure1}):
\begin{align}\label{set:Dhat} 
\hat{\setD}=\{ x\in \setD \mid \|x-\bar{w}\|^2\geq r  \}. %\\
%\tilde{\setD}=\{ x\in \setD \mid \|x-\bar{z}\|^2\leq \|\bar{w}-\bar{z}\|^2-r  \}.
\end{align}

We consider the following Cauchy problem
\begin{align}\label{dynamics:DS}\tag{DS-0}%\tag{DS$_0$}
\begin{aligned}
& \dot{x}(t)=F(x(t)),\quad \text{ }t\geq t_0 \geq 0,\\
& x(t_0)=x_{00}\in \hat{\setD}\setminus \{ \bar{z} \}, 
\end{aligned}	
\end{align}
where   $F:\ \hat{\setD} \rightarrow \hilbertX$ is a  continuous function on $\hat{\setD}$ and locally Lipschitz on $\hat{\setD}\setminus \{ \bar{z} \}$ and bounded on $\hat{\setD}$ ($\|F(x)\|\leq M$, $M>0$, $x\in \hat{\setD}$). 

Moreover, we assume:
\begin{enumerate}[label=(\Alph*)]
	\item\label{assumption:A1}  $\bar{z}$ is the only zero point of $F$ in $\hat{\setD}$, i.e. $F(x)=0$ iff $x=\bar{z}$. 
	\item\label{assumption:A2} for all $x\in \hat{\setD}$, for all $h\in [0,1]$ we have $x+hF(x)\in \hat{\setD}$
\end{enumerate}
Together with assumptions \ref{assumption:A1}, \ref{assumption:A2} we also consider the following assumption related to the behaviour of projection\footnote{Here, for  $f(x):=F(x)+x$ (so that $F(x)=f(x)-x$) we have that $\bar{z}\in H(\bar{w},f(x))$.}:
\begin{enumerate}[label=(\Alph*)] \setcounter{enumi}{2}
	\item \label{assumption:A3} $\langle F(x) \mid \bar{w} - x \rangle \leq 0$ for all $x\in \hat{\setD}$.
\end{enumerate}

\begin{figure}[H]
	%\hspace{-0.2\linewidth}
	\begin{center}
		\begin{subfigure}{0.3\linewidth}
			\includegraphics[trim={300px 100px 300px 100px},clip,scale=0.3]{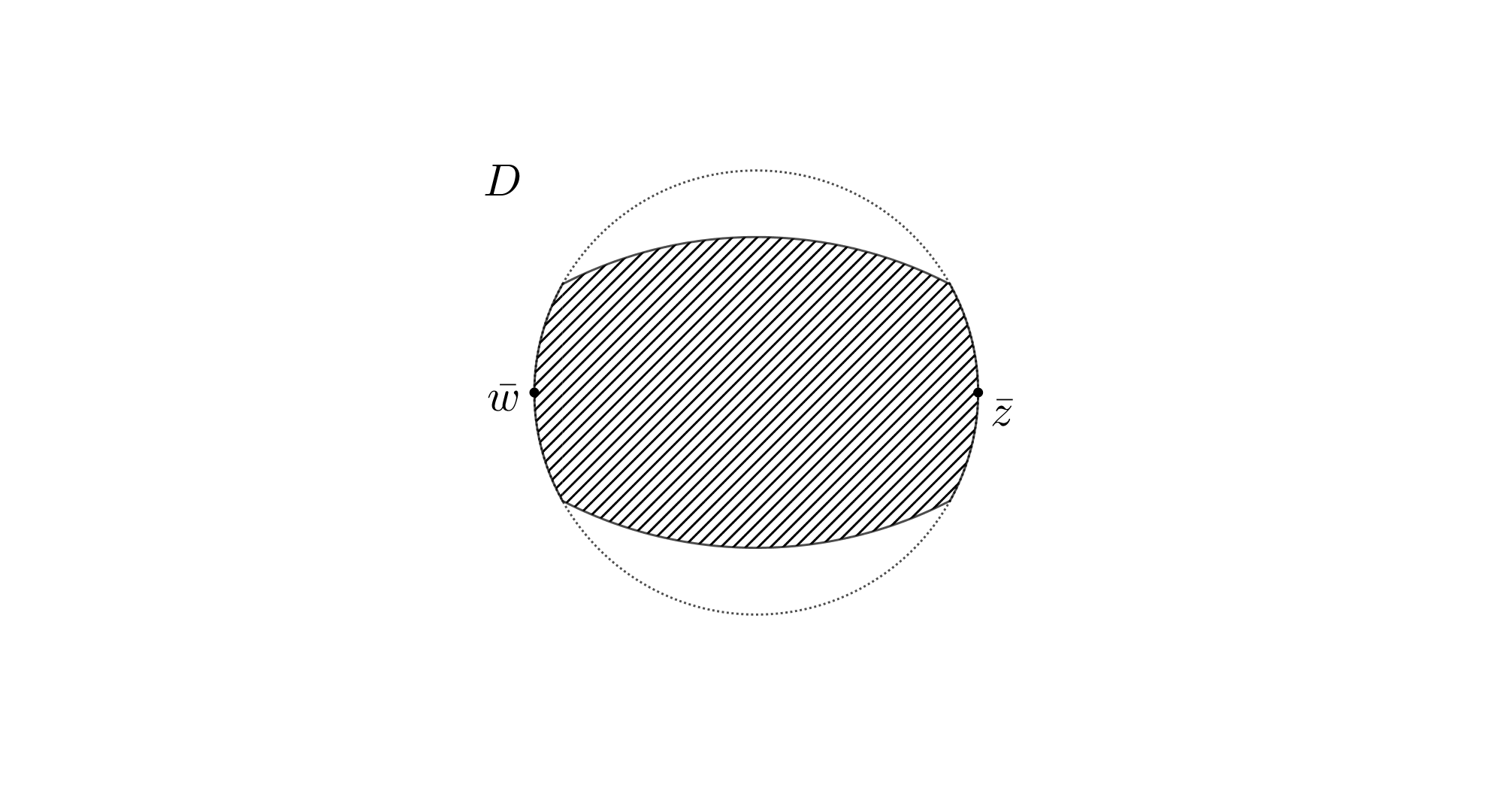}
		\end{subfigure}\hspace{0.05\linewidth}
		\begin{subfigure}{0.3\linewidth}
			\includegraphics[trim={300px 100px 300px 100px},clip,scale=0.3]{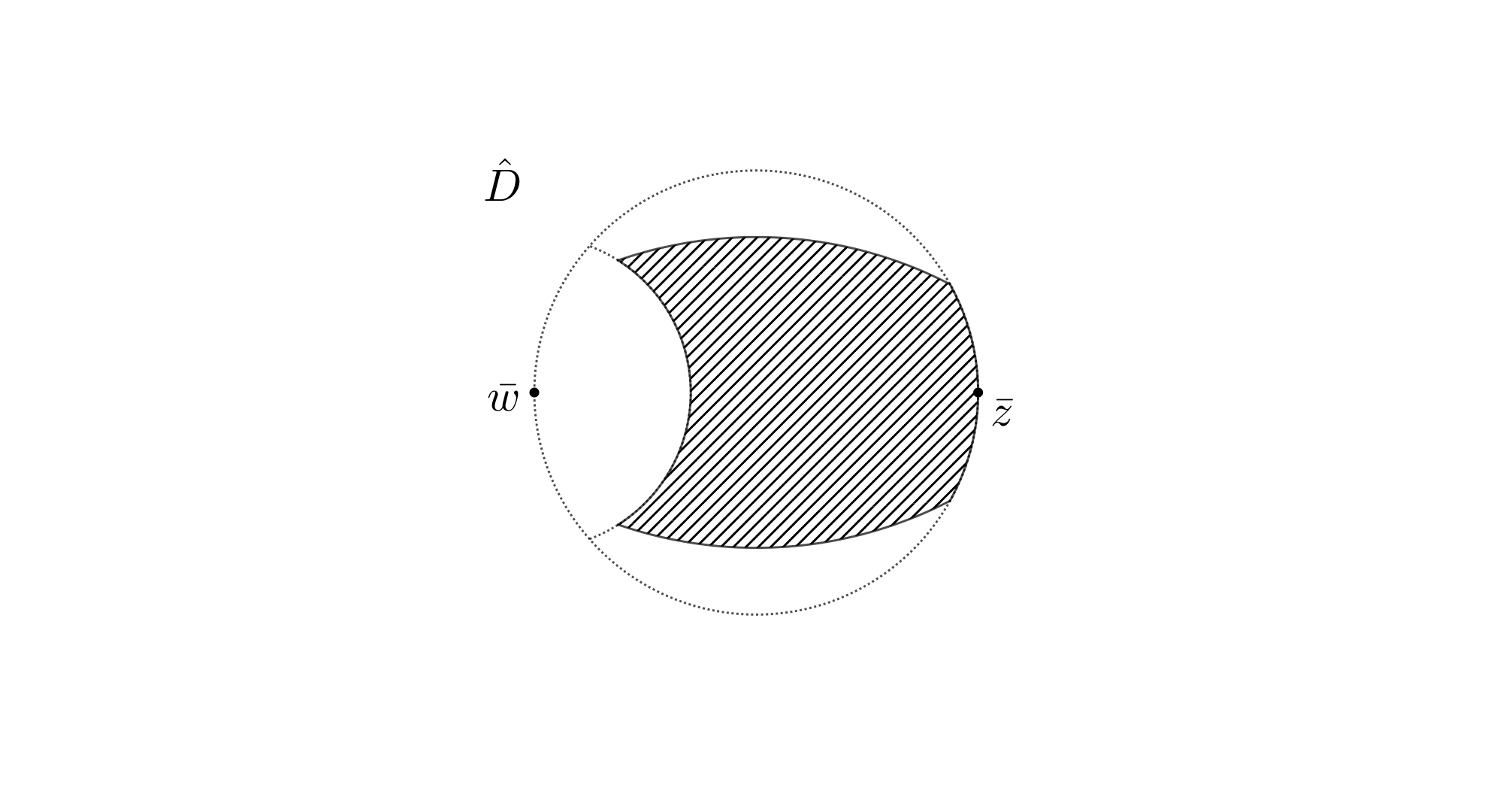}
		\end{subfigure}
		\caption{Illustration of the considered sets.}
		\label{figure1}
	\end{center}
\end{figure}
\begin{remark}
	The motivation for considering a nonconvex set $\hat{\setD}$ comes from the following observation. Consider $F:\ \setD \rightarrow \hilbertX$ defined as \begin{equation}\label{forumla:projection}
	F(x)=P_{\multifC(x)}(\bar{w}),
	\end{equation} where
	$P_{\multifC(x)}(\bar{w})$ is the projection of $\bar{w}$ onto $\multifC(x)$, 
	$\multifC :\ \setD \rightrightarrows \hilbertX$ is a multifunction  given by $\multifC(x)=H(\bar{w},x)\cap H(x,g(x))$ (see formula \eqref{Qblock} for $H(\cdot,\cdot)$)
	% 	\begin{align*}
	% 	& H(a,b)=\{ h\in \hilbertX \mid \langle h - b \mid a-b \rangle \leq 0  \},\quad  a,b\in \hilbertX 
	% 	\end{align*}
	and $g:\ \hilbertX \rightarrow \hilbertX $  satisfies $\bar{z}\in H(x,g(x))$ for all $x\in \hilbertX$. %and $P$.
	Under a suitable assumption on $g$, the function $F$ given by \eqref{forumla:projection} is locally Lipschitz on $\setD\setminus \{\bar{w},\bar{z}\}$ (see e.g. \cite{MR4178884}), continuous on $\setD\setminus \{\bar{w}\}$ and bounded on $\setD$. 
	%	For some multifunctions $C$ (the choice of $g$) it may happen that $x+F(x)\notin \text{conv} (\hat{\setD})$ for some $x \in \text{conv} (\hat{\setD})$.
\end{remark}

Throughout the  paper we use the following concept of solutions for dynamical systems \eqref{dynamics:DS} and  \eqref{dynamics:DS_2} and its extendibility.
\begin{definition} 
	\label{definition1}
	Let 
	$$
	{\mathcal T}=[t_{0};T), \ t_{0}<T\leq+\infty\ \  \text{or } {\mathcal T}=[t_{0};T],\ t_{0}<T<+\infty.
	$$
	\textit{Solution} of \begin{align}\label{dynamics:DS_A}\tag{DS-A}%\tag{DS$_0$}
	\begin{aligned}
	& \dot{x}(t)=F(x(t)),\quad \text{ }t\geq t_0 \geq 0,\\
	& x(t_0)=x_{00}\in A\setminus \{ \bar{z} \}, 
	\end{aligned}	
	\end{align}
	where $F:\ A \rightarrow \hilbertX$, $A\subseteq \hilbertX$, %\eqref{dynamics:DS} 
	on interval ${\mathcal T}$
	is any %abstract 
	function 
	$$
	x(\cdot)\in C^{1}({\mathcal T},A)
	%\mathbb{B})
	$$
	satisfying
	\begin{enumerate} 
		\item initial condition $x(t_{0})=x_0$;
		\item equation $\dot{x}(t)=F(x(t))$ for all $t\in {\mathcal T}$, where the differentiation is understood in the sense of strong derivative on space $\hilbertX$
		%\footnote{Raj: the phrase 'strong derivative in $\hilbertX$' is little confusing that it may be thought as  spatial derivative. I think it simply implies the strength of the derivative from the assumption that $x(\cdot)\in C^{1}({\mathcal T},\hat{\setD})$}
		, where at the boundary point of the interval ${\mathcal T}$, in the case when it belongs to ${\mathcal T}$, the differentiation is understood in the one-sided way.
	\end{enumerate}
\end{definition}

\begin{definition}
	A solution $x(t)$ to %Cauchy 
	problem \eqref{dynamics:DS_2}. on interval $\mathcal{T}_1=[0,T]$ (or $\mathcal{T}_1=[0,T)$) is called non-extendable if there is no solution $x_2(\cdot)\in C^1(\mathcal{T}_2,\hat{\setD})$ on any interval $\mathcal{T}_2$ of this problem satisfying conditions:
	\begin{enumerate}
		\item $\mathcal{T}_2\supsetneq \mathcal{T}_1$;
		\item $\forall t\in \mathcal{T}_1,\quad x_2(t)=x(t)$.
	\end{enumerate}
\end{definition}
\begin{remark}\label{remark:restriction}
	If $x(t)$ is a solution of Cauchy problem \eqref{dynamics:DS} on interval $\mathcal{T}=[0,T]$ (or $\mathcal{T}=[0,T)$), then restriction of $x(t)$ on any interval $\mathcal{T}_1=[t_0,t_1]\subset\mathcal{T}$ (or $\mathcal{T}_1=[t_0,t_1)\subset \mathcal{T}$) is a solution of Cauchy problem \eqref{dynamics:DS} on ${\mathcal T}_1$ with initial condition $x_0=x(t_0)$.
\end{remark}
The main results on the existence, uniqueness and extendibility of solutions to \eqref{dynamics:DS_2} read as follows.

\begin{theorem}[Existence and uniqueness]
	\label{proposition_prolongation}
	%Only one of the following holds:
	%	\begin{enumerate}
	%	\item
	Suppose that assumptions \ref{assumption:A1}, \ref{assumption:A2} and \ref{assumption:A3} hold. There exists a unique solution of \eqref{dynamics:DS}   on $[t_0,+\infty)$. 
	%	\item there exists $T_0>0$  such that the solution exists on $[0,T_0)$   and %$\lim_{t_\rightarrow T_0^-} \|x(t)\| = +\infty$.
	%	\end{enumerate}
\end{theorem}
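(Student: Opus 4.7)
The plan has three ingredients, mirroring the classical local-existence / extension / uniqueness trichotomy but modified to accommodate the fact that $F$ is defined on the closed set $\hatD$ and is only locally Lipschitz on $\hatD\setminus\{\bar{z}\}$.

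For local existence and uniqueness, I would fix $x_{00}\in\hatD\setminus\{\bar{z}\}$ and pick a closed neighborhood $\bar{\mathbb{B}}(x_{00},\beta)$ on which $F$ is $K$-Lipschitz, bounded by $M$, and disjoint from $\bar{z}$. Standard Picard-Lindel\"of would yield a unique $C^{1}$-solution on $[t_{0},t_{0}+\alpha]$ with $\alpha\leq\beta/M$, but since $\hatD$ is closed and $x_{00}$ may lie on its boundary, the iterates have to remain feasible. This is precisely what assumption (A2) supplies: the condition $x+hF(x)\in\hatD$ for every $h\in[0,1]$ is a strong Nagumo-type viability condition, and together with the Lipschitz bound it lets me pass to the limit of an Euler polygonal scheme inside $\hatD$ to produce a unique local $C^{1}$-solution (in the spirit of Corollary~\ref{theorem:existenceonrectangle}, but with the rectangle replaced by a viable neighborhood).

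To reach $[t_{0},+\infty)$, I consider the maximal extension $[t_{0},T_{\max})$ of the local solution. The uniform bound $\|\dot{x}(t)\|=\|F(x(t))\|\leq M$ makes $x(\cdot)$ $M$-Lipschitz on $[t_{0},T_{\max})$, so if $T_{\max}<+\infty$ then the limit $x^{\ast}:=\lim_{t\to T_{\max}^{-}}x(t)$ exists in $\hatD$ by closedness. If $x^{\ast}\neq\bar{z}$, restarting the local step at $x^{\ast}$ contradicts maximality; if instead $x^{\ast}=\bar{z}$, I extend by $x(t):=\bar{z}$ for $t\geq T_{\max}$, and continuity of $F$ with $F(\bar{z})=0$ (guaranteed by (A1)) yields $\dot{x}(T_{\max}^{-})=0=\dot{x}(T_{\max}^{+})$, so the extension is $C^{1}$ and still solves the ODE. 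Either way, a solution exists on $[t_{0},+\infty)$.

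Uniqueness follows by a supremum-of-agreement argument. Given two solutions $x,y$, set $T^{\ast}:=\sup\{t\geq t_{0}\mid x\equiv y\text{ on }[t_{0},t]\}$ and $p:=x(T^{\ast})=y(T^{\ast})$. If $p\neq\bar{z}$, local Lipschitz uniqueness at $p$ forces equality past $T^{\ast}$, giving $T^{\ast}=+\infty$. The delicate case $p=\bar{z}$ uses (A3) combined with the containment $\hatD\subset\bar{\mathbb{B}}((\bar{w}+\bar{z})/2,\|\bar{w}-\bar{z}\|/2)$: from (A3), $\frac{d}{dt}\|x(t)-\bar{w}\|^{2}=2\langle F(x(t))\mid x(t)-\bar{w}\rangle\geq 0$, so $\|x(t)-\bar{w}\|$ is non-decreasing and at most $\|\bar{w}-\bar{z}\|$; once it attains this upper bound at $t=T^{\ast}$, the equality case of Cauchy-Schwarz in the ball-intersection identity pins $x(t)\equiv\bar{z}$ for $t\geq T^{\ast}$, and likewise for $y$. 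I expect the main obstacle to be precisely the local existence step: the closedness of $\hatD$ together with the possibility that $x_{00}\in\partial\hatD$ rules out classical open-set Cauchy-Lipschitz, so (A2) must be exploited in an essential geometric way to keep the Picard/Euler iterates feasible; a secondary subtlety is forward uniqueness at the singular point $\bar{z}$, where $F$ is merely continuous and the rigidity argument above is indispensable.
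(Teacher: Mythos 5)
Your proposal is correct and follows essentially the same route as the paper: local existence via an Euler polygonal scheme kept feasible by the viability condition \ref{assumption:A2}, global extension via the uniform bound $\|F\|\leq M$ and the boundedness/closedness of $\hat{\setD}$ to obtain the limit at a putative finite blow-up time and restart, and uniqueness via a supremum-of-agreement argument whose singular case at $\bar{z}$ is resolved by \ref{assumption:A3} together with the containment $\hat{\setD}\subset\bar{\mathbb{B}}\bigl(\tfrac{\bar{w}+\bar{z}}{2},\tfrac{\|\bar{w}-\bar{z}\|}{2}\bigr)$. This matches the paper's Propositions \ref{proposition:increasing}, \ref{proposition:uniqueness}, \ref{proposition:existence_finite2}, Lemma \ref{lemma2} and the $\sup\mathbb{T}$ argument in Section \ref{section:extendability}.
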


\begin{theorem}[Behavior at $+\infty$]\label{proposition:conditions_convergence}
	% 	Suppose that assumptions \ref{assumption:A1}, \ref{assumption:A2} and \ref{assumption:A3} hold.   
	Let $x(t)$ be a solution of~\eqref{dynamics:DS} on $[t_0,+\infty)$. Assume that %either
	%\begin{enumerate}[label=(\alph*)]
	%\item $\hilbertX$ is finite-dimensional and $\lim_{t\rightarrow+\infty} x(t)$ exists, or
	%\item $\hilbertX$ is infinite-dimensional and
	for every increasing sequence $\{t_n\}_{n\in\mathbb{N}}$, $t_n\rightarrow +\infty$
	\begin{equation}\label{assumption:weak2}
	x(t_{n})\rightharpoonup \tilde{x} \implies \tilde{x}=\bar{z},
	\end{equation}
	where $x(t)$ is a unique solution of \eqref{dynamics:DS}.
	%	\end{enumerate}
	Then the trajectory $x(t)$ satisfies the condition $\lim_{t\rightarrow +\infty} x(t)=\bar{z}$, where convergence is understood in the sense of the norm of $\hilbertX$.
\end{theorem}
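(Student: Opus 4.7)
The plan is to combine a Fej\'er-type monotonicity of $t\mapsto \|x(t)-\bar{w}\|^{2}$ with the geometric inclusion $\hat{\setD}\subset \bar{\mathbb{B}}\bigl((\bar{w}+\bar{z})/2,\,\|\bar{w}-\bar{z}\|/2\bigr)$ in order to upgrade the weak-limit hypothesis \eqref{assumption:weak2} to strong convergence. First, differentiating along the trajectory and invoking assumption \ref{assumption:A3} gives
\begin{equation*}
\frac{d}{dt}\|x(t)-\bar{w}\|^{2}=2\langle F(x(t))\mid x(t)-\bar{w}\rangle\geq 0,
\end{equation*}
so $t\mapsto \|x(t)-\bar{w}\|^{2}$ is nondecreasing. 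Since $\hat{\setD}$ lies in the ball above and is therefore bounded, this quantity converges to some $\ell\in\mathbb{R}$ as $t\to+\infty$.

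The second, crucial, step is a purely geometric observation coupling the two distances. By \eqref{fact:subset_ball}, every $x\in\hat{\setD}$ satisfies $\langle \bar{z}-x\mid \bar{w}-x\rangle\leq 0$, and expanding this inner product together with the parallelogram-type identity $\|x-\bar{w}\|^{2}+\|x-\bar{z}\|^{2}=2\|x\|^{2}-2\langle x\mid \bar{w}+\bar{z}\rangle+\|\bar{w}\|^{2}+\|\bar{z}\|^{2}$ yields the pairing inequality
\begin{equation*}
\|x(t)-\bar{w}\|^{2}+\|x(t)-\bar{z}\|^{2}\leq \|\bar{w}-\bar{z}\|^{2}\qquad \text{for all }t\geq t_{0}.
\end{equation*}
In particular $\ell\leq \|\bar{w}-\bar{z}\|^{2}$.

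For the upgrade I would fix an arbitrary sequence $t_{n}\to +\infty$ and, using weak sequential compactness of bounded subsets of the Hilbert space $\hilbertX$, extract a subsequence with $x(t_{n_{k}})\rightharpoonup \tilde{x}$. Hypothesis \eqref{assumption:weak2} forces $\tilde{x}=\bar{z}$, and weak lower semicontinuity of the norm gives
\begin{equation*}
\|\bar{w}-\bar{z}\|^{2}\leq \liminf_{k}\|x(t_{n_{k}})-\bar{w}\|^{2}=\ell \leq \|\bar{w}-\bar{z}\|^{2},
\end{equation*}
so $\ell=\|\bar{w}-\bar{z}\|^{2}$. Substituting back into the pairing inequality,
\begin{equation*}
\|x(t_{n_{k}})-\bar{z}\|^{2}\leq \|\bar{w}-\bar{z}\|^{2}-\|x(t_{n_{k}})-\bar{w}\|^{2}\to 0,
\end{equation*}
so the subsequence converges strongly to $\bar{z}$. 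A standard double-subsequence argument then promotes this to $x(t_{n})\to \bar{z}$ for every sequence $t_{n}\to +\infty$, i.e.\ $\lim_{t\to+\infty}x(t)=\bar{z}$ in norm.

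The only genuinely nontrivial ingredient is recognising the pairing inequality; once it is in hand, the weak-to-strong upgrade is automatic, and assumption \ref{assumption:A3} plays only the role of providing Fej\'er monotonicity with respect to $\bar{w}$. No quantitative control on $F$ beyond boundedness of $\hat{\setD}$ is needed for this asymptotic statement.
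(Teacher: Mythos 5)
Your proof is correct and follows essentially the same route as the paper's: the pairing inequality $\|x-\bar{w}\|^{2}+\|x-\bar{z}\|^{2}\le\|\bar{w}-\bar{z}\|^{2}$ (the paper's Lemma~\ref{inequaltiy:triangle_ball}), weak lower semicontinuity of the norm combined with hypothesis \eqref{assumption:weak2}, and a final double-subsequence argument. The only cosmetic difference is that you invoke the Fej\'er monotonicity of $t\mapsto\|x(t)-\bar{w}\|^{2}$ to turn the relevant $\liminf$ into a genuine limit $\ell$, whereas the paper manipulates $\liminf$'s directly; both versions are sound.
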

\begin{remark}
	Condition \eqref{assumption:weak2} can be seen as a continuous 
	analogue of condition (iv) of Proposition 2.1 of \cite{Alotaibi_2015_best}. Namely, to obtain the strong convergence of the sequence generated by \eqref{scheme:haugazeau} it is assumed in Proposition 2.1 of \cite{Alotaibi_2015_best} that for any strictly increasing sequence $\{k_n\}\subset \mathbb{N}$ the following implication holds:
	\begin{equation*}
	x_{k_n}\rightharpoonup \tilde{x} \implies \tilde{x}=\bar{z}.
	\end{equation*}
\end{remark}

%Here we list up some apriori facts according to our present settings of the problem.

\section{%Existence and uniqueness of 
	Solutions to \texorpdfstring{\eqref{dynamics:DS}}{dynamicsDS2} on closed intervals}\label{section:existenceanduniqueness}

In this section we consider the existence and uniqueness of solutions to \eqref{dynamics:DS} defined on closed intervals, namely, $[t_0,T]$, where $T>t_0$ is finite. In deriving  existence and uniqueness results, we modify  two standard approaches (with the help of assumptions~\ref{assumption:A1}-\ref{assumption:A3}):  Euler method (Section \ref{euler}) and  contraction mapping principle (Section \ref{contraction}). To this aim we will use the following proposition.

\begin{proposition}\label{proposition:increasing}
	Assume that \ref{assumption:A3} holds. Then any %unique\footnote{Raj: Is it required to be unique?} 
	solution $x(t)$ of \eqref{dynamics:DS} satisfies the condition
	\begin{equation*}
	\|x(t)-\bar{w}\|\quad  \text{is nondecreasing with respect to } t\geq t_0.
	\end{equation*}
\end{proposition}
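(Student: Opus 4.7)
The plan is to differentiate the squared norm $t \mapsto \|x(t) - \bar{w}\|^2$ and show that its derivative is nonnegative, from which monotonicity of $\|x(t) - \bar{w}\|$ follows immediately since the square root is monotone on $[0,\infty)$.

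First I would note that, because $x(\cdot) \in C^1(\mathcal{T}, \hat{\setD})$ in the sense of Definition~\ref{definition1}, the scalar function $\varphi(t) := \|x(t) - \bar{w}\|^2 = \langle x(t) - \bar{w} \mid x(t) - \bar{w} \rangle$ is differentiable on $\mathcal{T}$ with
\begin{equation*}
\dot{\varphi}(t) = 2\langle \dot{x}(t) \mid x(t) - \bar{w}\rangle = 2\langle F(x(t)) \mid x(t) - \bar{w}\rangle,
\end{equation*}
using the equation $\dot{x}(t) = F(x(t))$ of \eqref{dynamics:DS}.

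Next I would invoke assumption~\ref{assumption:A3}, which asserts $\langle F(x) \mid \bar{w} - x\rangle \leq 0$ for every $x \in \hat{\setD}$. Since $x(t) \in \hat{\setD}$ for all $t \geq t_0$ by the definition of a solution, evaluating at $x = x(t)$ and flipping the sign gives $\langle F(x(t)) \mid x(t) - \bar{w}\rangle \geq 0$, hence $\dot{\varphi}(t) \geq 0$ on $\mathcal{T}$. Consequently $\varphi$ is nondecreasing, and so is $\sqrt{\varphi} = \|x(\cdot) - \bar{w}\|$.

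There is no genuine obstacle here; the only minor point to be careful about is the treatment of the derivative at the left endpoint $t_0$, which is handled by the one-sided differentiation convention built into Definition~\ref{definition1}, so the inequality $\dot{\varphi}(t_0) \geq 0$ is understood as a right derivative and still implies monotonicity on $[t_0, T)$ (or $[t_0,T]$).
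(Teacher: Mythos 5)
Your proof is correct and follows essentially the same route as the paper: differentiate $\|x(t)-\bar{w}\|^2$, substitute $\dot{x}(t)=F(x(t))$, and apply assumption~\ref{assumption:A3} to conclude the derivative is nonnegative. The remark about one-sided differentiation at $t_0$ is a harmless extra precaution not spelled out in the paper's version.
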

\begin{proof}
	Let us note that 
	%$x(t)$ is differentiable almost everywhere on $[0,+\infty)$,
	$x(t)$ is continuously differentiable  on   $ [t_0,+\infty) $, therefore by \ref{assumption:A3} we have
	\begin{align*}
	\frac{1}{2} \frac{d}{dt} \|x(t)-\bar{w}\|^2&= \langle \dot{x}(t) \mid x(t)-\bar{w}\rangle\\
	&= \langle F(x(t)) \mid x(t)-\bar{w}\rangle \geq 0.
	\end{align*}   %By continuity of $x(t)$ we obtain the thesis.
\end{proof}

Now we show the uniqueness of trajectories.
\begin{proposition}\label{proposition:uniqueness}
	Let $t_0\geq 0$ and let $x_0\in \hat{\setD}\setminus \{ \bar{z} \}$. Assume that assumptions \ref{assumption:A1} and \ref{assumption:A3} holds. If  \eqref{dynamics:DS} is solvable in a given interval $[t_0,T]$, then the solution is unique on this interval.
\end{proposition}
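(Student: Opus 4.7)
The plan is to run a standard ``maximal interval of agreement'' argument, using a case split at the singular point $\bar{z}$, where $F$ may fail to be locally Lipschitz. Given two solutions $x_1,x_2\colon [t_0,T]\to \hat{\setD}$ of \eqref{dynamics:DS} with $x_1(t_0)=x_2(t_0)=x_{00}$, I would set
\begin{equation*}
T^{*}:=\sup\bigl\{t\in [t_0,T]\,:\, x_1(s)=x_2(s)\text{ for all }s\in [t_0,t]\bigr\}
\end{equation*}
and aim to prove $T^{*}=T$. By continuity, $x_1(T^{*})=x_2(T^{*})=:x^{*}$; I would then assume $T^{*}<T$ and derive a contradiction by considering two cases for $x^{*}$.

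First, the case $x^{*}\neq \bar{z}$ is routine. In a suitably small closed ball around $x^{*}$ disjoint from $\bar{z}$, $F$ is Lipschitz with some constant $L>0$, so on a short interval $[T^{*},T^{*}+\delta]$ on which both trajectories remain in that ball, the integral form $x_i(t)-x^{*}=\int_{T^{*}}^{t}F(x_i(s))\,ds$ combined with Gronwall's inequality yields $\|x_1(t)-x_2(t)\|\equiv 0$ there, contradicting the supremum definition of $T^{*}$.

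The genuinely delicate case is $x^{*}=\bar{z}$. Here I would apply Proposition~\ref{proposition:increasing} separately to each restriction $x_i\big|_{[T^{*},T]}$, which by Remark~\ref{remark:restriction} is itself a solution of \eqref{dynamics:DS} starting from $\bar{z}$. This forces $t\mapsto \|x_i(t)-\bar{w}\|$ to be nondecreasing on $[T^{*},T]$, hence bounded below by $\|\bar{z}-\bar{w}\|$ for every $t\in [T^{*},T]$. On the other hand, using \eqref{inequality:setD} together with the characterisation \eqref{fact:subset_ball}, one has the inclusion
\begin{equation*}
\hat{\setD}\subset \bar{\mathbb{B}}\left(\tfrac{\bar{w}+\bar{z}}{2},\tfrac{\|\bar{w}-\bar{z}\|}{2}\right),
\end{equation*}
and $\bar{z}$ is the unique point of this ball at distance $\|\bar{w}-\bar{z}\|$ from $\bar{w}$. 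These two bounds together pin down $x_i(t)=\bar{z}$ for every $t\in [T^{*},T]$, so $x_1\equiv x_2\equiv \bar{z}$ on $[T^{*},T]$, again contradicting $T^{*}<T$.

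The main obstacle is precisely this second case: at the equilibrium $\bar{z}$, the vector field $F$ is assumed only continuous, not Lipschitz, so the Picard-Lindel\"of mechanism used in the first case is unavailable. The role of assumption \ref{assumption:A3}, via Proposition~\ref{proposition:increasing}, is to provide a geometric ``trapping'' of any trajectory that reaches $\bar{z}$, and the ball inclusion above is what converts this monotone distance information into the pointwise identity $x_i(t)=\bar{z}$ needed to close the argument.
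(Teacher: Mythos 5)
Your proof is correct and follows essentially the same route as the paper's: the same dichotomy at the meeting point (local Lipschitzness plus Gronwall away from $\bar{z}$, and Proposition~\ref{proposition:increasing} combined with the ball inclusion $\hat{\setD}\subset \bar{\mathbb{B}}\left(\frac{\bar{w}+\bar{z}}{2},\frac{\|\bar{w}-\bar{z}\|}{2}\right)$ to trap any trajectory that reaches $\bar{z}$). The only cosmetic difference is your definition of $T^{*}$ as the supremum of times up to which the solutions agree on the whole initial segment, versus the paper's supremum of the set of agreement times, and both serve the argument equally well.
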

\begin{proof}
	Now we show the uniqueness of solutions of \eqref{dynamics:DS} on $[t_0,T]$. 
	Suppose that $x_1(\cdot)$ and $x_2(\cdot)$ solve \eqref{dynamics:DS} on interval $[t_0,T]$.
	Let $\bar{t}\in [t_0,T]$ be such that 
	\begin{equation}\label{assumtion:infimum}
	\bar{t}:=\sup \{t\in [t_0,T] \mid \|x_1(t)- x_2(t)\|= 0\}.
	\end{equation}
	Let us note that $x_1(t_0)=x_{00}=x_2(t_0)$. Consider two cases:
	\begin{enumerate}
		\item[Case 1]: $x_1(\bar{t})=x_2(\bar{t})=\bar{z}$. Then, by Proposition \ref{proposition:increasing}, 
		%  by assumption \ref{assumption:A3}, for $i=1,2$ and for all $t>\bar{t}$
		%  \begin{align*}
		%    & \frac{1}{2} \frac{d}{dt} \|x_i(t)-\bar{w}\|^2= \langle \dot{x_i}(t) \mid x_i(t)-\bar{w} \rangle \\
		%     & = \langle F(x_i(t)) \mid  x_i(t)-\bar{w} \rangle \geq 0
		%  \end{align*}
		%  Hence 
		$\|x_1(t)-\bar{w}\|\geq \|\bar{z}-\bar{w}\|$ and  $\|x_2(t)-\bar{w}\|\geq \|\bar{z}-\bar{w}\|$ for $t\geq \bar{t}$. However, since $\bar{z}\in \hatD$ and, by \eqref{fact:subset_ball}, $\hatD\subset \setD \subset \bar{\mathbb{B}}(\frac{\bar{w}+\bar{z}}{2},\frac{\|\bar{w}-\bar{z}\|}{2})$, 
		\begin{equation*}
		\{ x\in \hilbertX \mid  \|x-\bar{w}\| \geq \|\bar{w}-\bar{z}\| \} \cap \hat{\setD}= \{\bar{z}\}.
		\end{equation*}
		Therefore, by assumption (A), $x_1(t)=x_2(t)=\bar{z}$ for all $t\in [\bar{t},T]$.
		\item[Case 2]: $x_1(\bar{t})=x_2(\bar{t})\neq \bar{z}$ and $\bar{t}< T$. Then, by the local Lipschitzness of $F(\cdot)$ on $\hat{\setD}\setminus \{\bar{z}\}$ there exists a neighbourhood of $x_1(\bar{t})$, namely $U(x_1(\bar{t}))$ such that $F$ is locally Lipschitz in $U(x_1(\bar{t}))$ with some constant $L_{x_1(\bar{t})}$, i.e.,
		\begin{equation*}
		\forall x^1,x^2 \in U(x_1(\bar{t})) \quad \|F(x^1)-F(x^2)\|\leq L_{x_1(\bar{t})} \|x^1-x^2\|.
		\end{equation*}
		Since $x_1$ and $x_2$ are Lipschitz functions with constant $M$ there exists a neighbourhood $V(\bar{t})\cap [t_0,T]$ such that
		\begin{equation*}
		\forall t\in V(\bar{t})\cap [t_0,T] \quad x_1(t)\in U(x_1(\bar{t})) \quad \wedge \quad x_2(t)\in U(x_1(\bar{t})).
		\end{equation*}
		Then for $t\in V(\bar{t})\cap [t_0,T]$
		\begin{align*}
		&\frac{d}{dt}\left(\frac{1}{2}\|x_1(t)-x_2(t)\|^2\right)=\langle \dot{x}_1(t)-\dot{x}_2(t) \mid x_1(t)-x_2(t)\rangle \\
		&= \langle F(x_1(t))-F(x_2(t)) \mid x_1(t)-x_2(t)\rangle\leq L_{x_1(\bar{t})}\|x_1(t)-x_2(t)\|^2.
		\end{align*}
		By using Gronwall's inequality for the function $t\rightarrow\|x_1(t)-x_2(t)\|^2$ we obtain that $\|x_1(t)-x_2(t)\|^2\leq 0$, i.e., $x_1(t)=x_2(t)$ for $t\in V(\bar{t})\cap [t_0,T]$. This contradicts \ref{assumtion:infimum} with $\bar{t}\neq T$.
	\end{enumerate}  
	
\end{proof}

\begin{proposition}\label{proposition:voltera_solution}
	$x(t)$ is a solution of \eqref{dynamics:DS_A} on $\mathcal{I}=[t_0,T]$ ($T>t_0$ is arbitrary) if and only if it satisfies the condition
	\begin{equation}\label{solutions:integral_condition}
	x(t)=x_0+\int_{t_0}^{t} F(x(s))\, ds,\quad \forall t\in \mathcal{I},
	\end{equation}	
	where the integral is understood in the sense of Riemann and $x(t)\in \hat{\setD}$, $t\in \mathcal{I}$.
\end{proposition}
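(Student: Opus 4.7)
The plan is to establish the two implications of the stated equivalence by invoking the fundamental theorem of calculus for strongly continuous, Hilbert-space-valued mappings on a compact interval. The argument is a classical Cauchy-to-Volterra reduction, so I only need to check that the regularity hypotheses needed in each direction are supplied by Definition \ref{definition1} and by the standing assumptions on $F$ (continuity on $\hat{\setD}$ and boundedness by $M$).

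For the forward direction, I would start from a solution $x(\cdot)\in C^{1}(\mathcal{I},\hat{\setD})$ of \eqref{dynamics:DS_A}. By Definition \ref{definition1}, $\dot{x}$ is strongly continuous on $\mathcal{I}$, and because $x(s)\in\hat{\setD}$ with $F$ continuous on $\hat{\setD}$, the map $s\mapsto F(x(s))=\dot{x}(s)$ is a strongly continuous $\hilbertX$-valued function on $\mathcal{I}$, hence Riemann integrable there. The fundamental theorem of calculus (applied componentwise via duality with $\hilbertX^{*}$, or directly to the strong derivative) then yields
\begin{equation*}
x(t)-x(t_{0})=\int_{t_{0}}^{t}\dot{x}(s)\,ds=\int_{t_{0}}^{t}F(x(s))\,ds,\qquad t\in\mathcal{I},
\end{equation*}
and substituting the initial condition $x(t_{0})=x_{00}$ gives \eqref{solutions:integral_condition}. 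The one-sided differentiability stipulated at the endpoints of $\mathcal{I}$ in Definition \ref{definition1} causes no difficulty for this step.

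For the converse, I would assume that $x:\mathcal{I}\to\hat{\setD}$ satisfies \eqref{solutions:integral_condition}. Evaluating at $t=t_{0}$ recovers the initial condition $x(t_{0})=x_{00}$. Because $F$ is bounded on $\hat{\setD}$ by $M$ and $x(s)\in\hat{\setD}$ for every $s\in\mathcal{I}$, the integrand $F\circ x$ is bounded, so that
\begin{equation*}
\|x(t)-x(t')\|\le\int_{t'}^{t}\|F(x(s))\|\,ds\le M|t-t'|,\qquad t,t'\in\mathcal{I},
\end{equation*}
i.e.\ $x$ is $M$-Lipschitz, and in particular continuous. Continuity of $x$ together with continuity of $F$ on $\hat{\setD}$ then makes $F\circ x$ strongly continuous on $\mathcal{I}$, so the fundamental theorem of calculus again applies to the right-hand side of \eqref{solutions:integral_condition}: $x$ is strongly (one-sidedly at the endpoints) continuously differentiable with $\dot{x}(t)=F(x(t))$ throughout $\mathcal{I}$. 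Combined with the assumption $x(\mathcal{I})\subset\hat{\setD}$, this shows $x\in C^{1}(\mathcal{I},\hat{\setD})$ is a solution of \eqref{dynamics:DS_A} in the sense of Definition \ref{definition1}.

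The only conceptual point that needs care—and the closest thing to an obstacle—is checking that the fundamental theorem of calculus is invoked legitimately for strong Riemann integrals of Hilbert-space-valued continuous functions, and that the one-sided differentiation convention at $t_{0}$ and $T$ (from Definition \ref{definition1}) is respected in both directions. Both are standard and require no additional hypothesis beyond continuity of $F\circ x$, which is verified above in each implication.
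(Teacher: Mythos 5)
Your argument is correct: both directions are the standard Cauchy--Volterra equivalence via the fundamental theorem of calculus for strongly continuous Hilbert-space-valued functions, and you rightly verify the continuity of $F\circ x$ (and the Lipschitz bound by $M$ in the converse) before invoking it. The paper actually states Proposition \ref{proposition:voltera_solution} without proof, treating it as a classical fact, so your write-up simply supplies the standard argument the authors leave implicit; there is nothing to compare against and no gap to report.
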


Let us define
\begin{equation*}
\mathbb{B}_{t_0,T}:= C([t_0,t_0+T], \hat{D}  )
\end{equation*}
and
\begin{equation*}
\mathbb{B}_{t_0,x_0,T}^R:= \{ x\in \mathbb{B}_{t_0,T} \mid \sup_{t\in [t_0,t_0+T]} \|x(t)-x_0\|\leq R     \}.
\end{equation*}

Let us note that $\mathbb{B}_{t_0,T}$ is a complete metric space due to the fact that $\hat{\setD}$ is a closed subset of a Hilbert space $\hilbertX$.
Moreover, in the sequel we consider on $\hat{\setD}$   the topology induced by the topology of the space.

\subsection{Euler method}
\label{euler}
We start with the following construction of Euler trajectories. 

For any $\lambda\in (0,1]$ define $c_n^{\lambda}$, $n=0,1,\dots$% ,m_\lambda$, $m_\lambda:=\lceil(T-t_0)\lambda^{-1}\rceil$
as follows
\begin{equation}\label{sequence:nodes2}
c_0^{\lambda}:=x_0,\ \  c_{n+1}^{\lambda}:=c_{n}^{\lambda}+\lambda F(c_{n}^{\lambda}), \ n=0,1,\cdots.%,m_\lambda-1.
\end{equation}
Then, for any $\lambda \in (0,1]$ define a continuous trajectory on $[t_0,T]$ as follows
\begin{align}\label{formula:fun_seq_2_2}
\begin{aligned}
c_\lambda(t)=c_n^\lambda+(t-t_0-n\lambda)F(c_n^\lambda),\quad & t\in [t_0+n\lambda,t_0+(n+1)\lambda]%\min\{t_0+(n+1)\lambda,T\}],
\\ & n=0,1,\cdots.% m_\lambda.
\end{aligned}
\end{align}

\begin{proposition}\label{proposition:existence_finite2}
	Let $t_0>0$ and let $x_0\in \hat{\setD}\setminus \{ \bar{z} \}$. Assume that %assumptions \ref{assumption:A1} and 
	\ref{assumption:A2} %and \ref{assumption:A3}
	hold.
	\begin{enumerate}
		\item If $\hilbertX$ is finite-dimensional, then for all $T>t_0$ there exists
		a %unique 
		solution of $x(t)$ of \eqref{dynamics:DS} on $[t_0,T]$ in the class $\mathbb{B}_{t_0,T}$,
		\item If $\hilbertX$ is infinite-dimensional, then there exists $R>0$ and  $T>t_0$ such that there exists
		a 
		%unique 
		solution of $x(t)$ of \eqref{dynamics:DS} on $[t_0,T]$ in the class $\mathbb{B}_{t_0,x_0,T}^R$,
	\end{enumerate}
	%, for which $\|x(t)-x_0\|\leq R$, 			$t\in[0,\frac{R}{M}]$.
\end{proposition}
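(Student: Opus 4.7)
My plan is to exploit the explicit Euler construction $c_\lambda(t)$ from \eqref{formula:fun_seq_2_2}, first checking that every member of the family lives in $\hat{\setD}$, then harvesting a limit point as $\lambda \downarrow 0$ and recognizing it via the Volterra reformulation of Proposition \ref{proposition:voltera_solution}. I would start with an invariance statement: by induction on $n$, each node $c_n^{\lambda}$ lies in $\hat{\setD}$, because Assumption \ref{assumption:A2} applied with $h = \lambda \in (0,1]$ gives $c_{n+1}^{\lambda} = c_n^{\lambda} + \lambda F(c_n^{\lambda}) \in \hat{\setD}$; the linear interpolant then lies in $\hat{\setD}$ by applying \ref{assumption:A2} again with $h = t - t_0 - n\lambda \in [0,\lambda] \subset [0,1]$. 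On each sub-interval one has $\dot c_\lambda(t) = F(c_n^\lambda)$, so $\|\dot c_\lambda\|_\infty \leq M$ and the family $\{c_\lambda\}_{\lambda\in(0,1]}$ is uniformly $M$-Lipschitz on $[t_0, T]$; in particular $\|c_\lambda(t) - x_0\| \leq M(T-t_0)$.

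For the finite-dimensional case I would invoke Arzela-Ascoli. Since $\hat{\setD}$ is closed and bounded in finite dimensions it is compact, so the uniformly bounded and equicontinuous family $\{c_\lambda\}$ admits a subsequence $c_{\lambda_k}$ converging uniformly on $[t_0, T]$ to some $x \in C([t_0,T],\hat{\setD})$. Writing $c_{\lambda_k}(t) = x_0 + \int_{t_0}^{t} F(\tilde c_{\lambda_k}(s))\,ds$ with $\tilde c_{\lambda_k}$ the right-continuous step function equal to $c_n^{\lambda_k}$ on $[t_0 + n\lambda_k, t_0 + (n+1)\lambda_k)$, I exploit the uniform estimate $\|\tilde c_{\lambda_k}(s) - c_{\lambda_k}(s)\| \leq M\lambda_k$, the continuity of $F$, and uniform convergence $c_{\lambda_k} \to x$ to pass to the limit, obtaining $x(t) = x_0 + \int_{t_0}^{t} F(x(s))\,ds$. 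By Proposition \ref{proposition:voltera_solution}, $x$ solves \eqref{dynamics:DS} on $[t_0,T]$.

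For the infinite-dimensional case, Arzela-Ascoli is unavailable, so I would localize. Because $x_0 \neq \bar{z}$ and $F$ is locally Lipschitz on $\hat{\setD}\setminus \{\bar z\}$, I pick $R > 0$ small enough that $\bar{\mathbb{B}}(x_0, R) \cap \hat{\setD}$ avoids $\bar z$ and $F$ is Lipschitz with some constant $L$ on that set, then choose $T > t_0$ with $M(T-t_0) \leq R$, which forces $c_\lambda([t_0,T]) \subset \mathbb{B}_{t_0,x_0,T}^{R}$ by the preceding Lipschitz bound. On this region I would show that $\{c_\lambda\}$ is Cauchy in the sup norm by a standard one-sided Gronwall argument: for $t \in [t_0+n\lambda,t_0+(n+1)\lambda]$ one has $\|F(c_\lambda(t)) - F(\tilde c_\lambda(t))\| \leq LM\lambda$, so comparing two Euler polygons $c_\lambda$ and $c_{\lambda'}$ yields
\[
\|c_\lambda(t) - c_{\lambda'}(t)\| \leq LM(\lambda+\lambda')(t-t_0) + L \int_{t_0}^{t}\|c_\lambda(s) - c_{\lambda'}(s)\|\,ds,
\]
and Gronwall's lemma gives $\|c_\lambda - c_{\lambda'}\|_\infty \leq C(\lambda + \lambda')$ for some constant $C = C(L,M,T)$. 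Completeness of $\mathbb{B}_{t_0,x_0,T}^{R}$ delivers a limit $x$, which satisfies the integral equation of Proposition \ref{proposition:voltera_solution} by the same limiting argument as in the finite-dimensional case.

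The main obstacle I anticipate is the infinite-dimensional Cauchy estimate: one must carefully decouple the discretization error (controlled by $\lambda + \lambda'$ via $\|F(c_\lambda(t)) - F(\tilde c_\lambda(t))\|$) from the propagated error (absorbed through Gronwall), while simultaneously ensuring that the trajectories never leave the small Lipschitz ball around $x_0$ throughout the construction. Finite-dimensionality bypasses this through compactness but at the price of losing control on $T$; in infinite dimensions the price is a shortened existence interval, which is reflected in the conclusion of part (2) of the proposition.
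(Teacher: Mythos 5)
Your construction of the Euler polygons, the invariance argument via \ref{assumption:A2}, the uniform Lipschitz bound $\|\dot c_\lambda\|\leq M$, and the passage to the Volterra identity of Proposition \ref{proposition:voltera_solution} coincide with the paper's proof, and your finite-dimensional case is essentially identical to the paper's (compactness of $\hat{\setD}$, uniform continuity of $F$, Arzela--Ascoli, limit in the integral equation). Where you genuinely diverge is the infinite-dimensional case: the paper estimates the defect $\Delta_\lambda=\dot c_\lambda-F(c_\lambda)$ uniformly, shows $\|\Delta_\lambda\|_\infty\to 0$, and then again invokes Arzela--Ascoli to extract a uniformly convergent subsequence of $\{c_{\lambda_k}\}$; you instead prove that the whole family $\{c_\lambda\}$ is Cauchy in $C([t_0,T];\hilbertX)$ by a Gronwall comparison of two polygons, using the Lipschitz constant of $F$ on the small ball $\bar{\mathbb{B}}(x_0,R)\cap\hat{\setD}$ that avoids $\bar z$. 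Your route buys something real here: in an infinite-dimensional Hilbert space, uniform boundedness plus equicontinuity does not by itself yield a uniformly convergent subsequence (Arzela--Ascoli additionally requires pointwise relative compactness of the values, which closedness and boundedness of $\hat{\setD}$ do not provide), so the completeness-plus-Gronwall mechanism is the more robust way to obtain the limit trajectory; the price, as you note, is that the existence interval is shortened to $M(T-t_0)\leq R$, which is exactly the localization already present in the statement of part (2). The paper, for its part, gets the finite-dimensional case for \emph{every} $T>t_0$ with a single unified argument and without using the Lipschitz constant quantitatively. One small point to make explicit in your write-up: the uniform Lipschitz constant $L$ on $\bar{\mathbb{B}}(x_0,R)\cap\hat{\setD}$ should be justified by choosing $R$ so small that this ball is contained in a single neighbourhood of $x_0$ on which the locally Lipschitz $F$ is Lipschitz (the paper makes the same choice of $R$), since local Lipschitzness alone does not furnish a uniform constant on an arbitrary bounded set in infinite dimensions.
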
	
\begin{proof}Let us start with the initial settings.
	\begin{enumerate}
		\item\label{case:first} In case $\hilbertX$ is finite-dimensional we take any $T>t_0$. Let us note that in this case $\hat{\setD}$ is closed and  bounded, hence compact. Since $F$ is continuous on $\hat{\setD}$, $F$ is uniformly continuous, i.e. 
		\begin{equation*}
		\forall \varepsilon>0\  \exists\delta>0\ \forall x_1,x_2\in \hat{\setD} \quad  \|x_1-x_2\|< \delta \implies \|F(x_1)-F(x_2)\|< \varepsilon.
		\end{equation*}
		\item\label{case:second} In case  $\hilbertX$ is infinite-dimensional let $T=\frac{R}{M}+t_0$, where $R$ is  such that $F(\cdot)$ is Lipschitz on $B(x_0,R)$. Let $m_\lambda:=\lceil(T-t_0)\lambda^{-1}\rceil$. 	Let us note that, by the fact that $x_0\in \hat{\setD}\setminus \{\bar{z}\}$ and, by assumption \ref{assumption:A2}, for any $\lambda\in (0,1]$ and  all $t\in [t_0,T]$, $c_\lambda(t)\in \hat{\setD}$.
		For any $\lambda \in (0,1]$ function $c_\lambda(\cdot)$ given by \eqref{formula:fun_seq_2_2} is differentiable on $[t_0,T]\setminus \{t_0,t_0+\lambda,\dots, t_0+m_\lambda \lambda \}$ as a piecewise affine function.
		
		%If $\hilbertX$ is infinite dimensional, then f
		For all $\lambda\in (0,1]$ and any  $t\in [t_0,T]$ ($t=t_0+a\lambda+\tilde{t}$, $a\in \mathbb{N}$, $0\leq \tilde{t}<\lambda$) we have
		\begin{align*}
		&\|c_\lambda(t)-x_0\|= \| x_0 + \lambda \sum_{n=0}^{a-1} F(c_{\lambda}(t_0+n\lambda))+\tilde{t}F(c_{\lambda}(t_0+a\lambda)) - x_0\|\\
		&\leq \lambda \sum_{i=0}^{a-1} M + \tilde{t} M= M(a\lambda+\tilde{t})\leq M (T-t_0) =\frac{R}{M} M = R.
		\end{align*}
		Let us note that in this case $F$ is uniformly continuous on $B(x_0,R)\cap \hat{\setD}$, i.e. 
		\begin{equation*}
		\forall \varepsilon>0\  \exists\delta>0\ \forall x_1,x_2\in B(x_0,R)\cap \hat{\setD} \quad  \|x_1-x_2\|< \delta \implies \|F(x_1)-F(x_2)\|< \varepsilon.
		\end{equation*}
	\end{enumerate}
	Now let us continue the proof in both cases \ref{case:first}. and \ref{case:second}. together.	For any $\lambda \in (0,1]$ define
	\begin{equation}
	\Delta_\lambda(t):=\left\{ \begin{array}{ll}
	\dot{c}_\lambda(t) - F(c_\lambda(t)),\quad \mbox{} & t_0+n\lambda<t<\min\{t_0+(n+1)\lambda,T\},\\
	& n=0,1\dots,m_\lambda, \\
	0  & t= t_0,t_0+ \lambda,\dots,t_0+m_\lambda\lambda.
	\end{array} \right.
	\end{equation}
	Note that for all $t\in [t_0,T]$,
	\begin{equation*}
	c_\lambda(t)=x_0+\int_{t_0}^{t} \dot{c}_\lambda(s)\, ds = x_0+\int_{t_0}^{t} F(c_\lambda(s)) + \Delta_\lambda(s)  \, ds,
	\end{equation*}
	We have
	\begin{align*}
	\|\Delta_\lambda(t)\|=\| F(c_n^\lambda)-F(c_\lambda(t))\|,\quad & t\in [t_0+n\lambda ,\min\{t_0+(n+1)\lambda,T\}],\\
	&n=0,1,\dots,m_\lambda. 
	\end{align*}
	Let us note that $c_\lambda(\cdot)$ is Lipschitz continuous on $[t_0,T]$ because it is differentiable almost everywhere and the norm of its derivative is bounded by $M$. Therefore 
	\begin{align*}
	&\forall n=0,1\dots,m_\lambda\\%\forall t\in [t_0+n\lambda,\min\{ t_0+(n+1)\lambda,T\}]\\
	&\sup_{t\in [t_0+n\lambda,\min\{ t_0+(n+1)\lambda,T\}]}  \|c_\lambda(t_0+
	n\lambda )-c_k(t)\|\leq M |n\lambda-t|\leq M\lambda
	\end{align*}

	% Take any $\varepsilon>0$. Then
	% \begin{enumerate}
	%     \item if $\hilbertX$ is finite dimensional, then there exists $\delta>0$ such that for all  $x_1,x_2 \in \hat{\setD}$ we have
	%     \item if $\hilbertX$ is infinite dimensional, then  there exists $\delta>0$ such that for all  $x_1,x_2 \in B(x_0,R)$ we have
	% \end{enumerate}
	% \begin{equation*}
	%     \|x_1-x_2\|< \delta \implies \|F(x_1)-F(x_2)\|< \varepsilon.
	% \end{equation*}
	Fix any $\varepsilon>0$ and take $\lambda \in (0,1]$ such that $ M \lambda <\delta$. Then for all  $n=0,\dots,m_\lambda$ we have 
	\begin{align*}
	&\sup_{t\in [t_0+n\lambda,\min\{ t_0+(n+1)\lambda,T\}]} \|c_\lambda(t_0+n\lambda)-c_\lambda(t)\|\\
	&=\sup_{t\in [t_0+n\lambda,\min\{ t_0+(n+1)\lambda,T\}]} \| (t-(t_0+n\lambda))F(c_n^\lambda) \|\leq M\lambda< \delta,
	\end{align*} and consequently
	\begin{align*}
	&\forall n=0,\dots,m_\lambda\\ %\forall t\in [\frac{n}{k},\frac{n+1}{k}] \\
	& \sup_{t\in [t_0+n\lambda,\min\{ t_0+(n+1)\lambda,T\}]} \| F(c_n^\lambda)-F(c_\lambda(t)\|<\varepsilon.
	\end{align*}
	%     	By the uniform continuity of $F$ on %$\text{range} ( c_\lambda(t),\ t\in[t_0,T])$ (compact set) 
	%     	$\hat{\setD}$ we have
	% 	\begin{align*}
	% 	    &\forall \varepsilon >0\ \exists \lambda \in \mathbb{N}\ \forall \lambda\in (0,\lambda_0]\ \forall n=0,\dots,m_\lambda\\ %\forall t\in [\frac{n}{k},\frac{n+1}{k}] \\
	% 	    & \sup_{t\in [t_0+n\lambda,\min\{ t_0+(n+1)\lambda,T\}]} \| F(c_\lambda(t_0+n\lambda))-F(c_\lambda(t)\|<\varepsilon.
	% 	\end{align*}
	Hence, for all $\lambda < \frac{\delta}{M}$, we have
	\begin{align*}
	%\forall \varepsilon>0\ \exists \lambda_0\in \mathbb{N}\ & \forall \lambda\in (0,\lambda_0],\ 
	\forall n=0,\dots,m_\lambda \ \forall t\in  [t_0+n\lambda,\min\{ t_0+(n+1)\lambda,T\}]\quad  \|\Delta_\lambda(t)\|<\varepsilon.
	\end{align*}
	Thus,
	\begin{equation*}
	\|\Delta_\lambda(\cdot)\|_{+\infty}\rightarrow 0 \text{ as } \lambda\rightarrow 0 \text{ on } [t_0,T].
	\end{equation*}

	Let $\{\lambda_k\}_{k\in\mathbb{N}}$ be a sequence in $(0,1]$ such that $\lambda_k\rightarrow 0$ as $k\rightarrow 0$. 
	By the Ascoli-Arzela Theorem, there exists a uniformly convergent subsequence of $\{ c_{\lambda_k}(t)  \}_{k\in \mathbb{N}}$, namely $\{ c_{\lambda_{k_i}}(t)  \}_{i\in \mathbb{N}}$, which converges to $x(t)=\lim_{i\rightarrow +\infty}  c_{\lambda_{k_i}}(t)$ for $t\in [t_0,T]$, i.e.
	\begin{align}\label{convergence:trajectories_to_solution}
	\begin{aligned}
	&\exists \{\lambda_{k_i}\}_{i\in \mathbb{N}}\ \forall \varepsilon>0\ \exists i_0\in \mathbb{N}\ \forall i\geq i_0 \ \forall t\in [t_0,T]    \\
	&\|x(t)-c_{\lambda_{k_i}}(t)\|<\varepsilon.
	\end{aligned}
	\end{align}
	Therefore, for all $t\in [t_0,T]$,
	\begin{align*}
	c_{\lambda_{k_i}}(t)=x_0+\int_{t_0}^{t} F(c_{\lambda_{k_i}}(s)) + \Delta_{\lambda_{k_i}}(s)\, ds,\ \ \ 
	x(t)=x_0+\int_{t_0}^{t} F(x(s))\, ds.
	\end{align*}
	
	By Proposition \ref{proposition:voltera_solution},  $x(t)$ is a solution of \eqref{dynamics:DS} on $[t_0,T]$. Since $c_{\lambda_{k_i}}(t)\in \hat{D}$, $i\in \mathbb{N}$, $t\in [t_0,T]$, by the closedness of $\hat{\setD}$, we obtain that $x(t)\in \hat{D}$, $[t_0,T]$.   
\end{proof}

\begin{corollary}\label{lemma1}
	Let $t_0\geq 0$, $x_0\in \hat{\setD}\setminus \{ \bar{z}\}$ be arbitrary fixed. Assume that assumptions \ref{assumption:A1}, \ref{assumption:A2}, \ref{assumption:A3} are satisfied. Then there exist  $R>0$, $T^\prime>0$ such that for all $T\in [t_0,T^\prime)$ there exists solution to \eqref{dynamics:DS} on $[t_0,t_0+T]$ and it is unique in the class $\mathbb{B}_{t_0,x_0,T}^R$.
\end{corollary}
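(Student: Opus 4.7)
The plan is to derive the corollary by a direct combination of Proposition~\ref{proposition:existence_finite2} (existence) with Proposition~\ref{proposition:uniqueness} (uniqueness); the argument is essentially bookkeeping.

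First I would apply Proposition~\ref{proposition:existence_finite2}, whose sole assumption \ref{assumption:A2} is among the standing hypotheses. This supplies the required constants $R>0$ and $T^{\prime}>0$ together with a solution $x(\cdot)$ of \eqref{dynamics:DS} on the closed interval $[t_0,t_0+T^{\prime}]$ that stays within the ball $\bar{\mathbb{B}}(x_0,R)$. In the infinite-dimensional case this is exactly part~(2) of the proposition. In the finite-dimensional case, part~(1) provides existence for any $T^{\prime}>0$, and since $\hat{\setD}$ is closed and bounded one may enlarge $R$ so that $\hat{\setD}\subset \bar{\mathbb{B}}(x_0,R)$; then every solution lies in $\mathbb{B}_{t_0,x_0,T^{\prime}}^R$ automatically, uniformly in $T^{\prime}$.

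For any $T\in[t_0,T^{\prime})$ I would invoke Remark~\ref{remark:restriction}: the restriction $x|_{[t_0,t_0+T]}$ is itself a solution of \eqref{dynamics:DS} on the subinterval $[t_0,t_0+T]$, and the trivial estimate
\[
\sup_{t\in [t_0,t_0+T]}\|x(t)-x_0\| \;\leq\; \sup_{t\in [t_0,t_0+T^{\prime}]}\|x(t)-x_0\| \;\leq\; R
\]
places this restriction in the class $\mathbb{B}_{t_0,x_0,T}^R$, furnishing existence in the announced class.

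Uniqueness is then immediate from Proposition~\ref{proposition:uniqueness}, whose hypotheses \ref{assumption:A1} and \ref{assumption:A3} are part of the standing assumptions of the corollary; that proposition actually delivers unconditional uniqueness on $[t_0,t_0+T]$, so uniqueness within the narrower class $\mathbb{B}_{t_0,x_0,T}^R$ is automatic. I do not anticipate any serious obstacle here; the only mild subtlety is uniformizing the two regimes of Proposition~\ref{proposition:existence_finite2} when pinning down the radius $R$ in the finite-dimensional setting, for which the boundedness of $\hat{\setD}$ suffices.
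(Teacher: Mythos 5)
Your proposal is correct and follows essentially the same route as the paper, which simply cites Proposition~\ref{proposition:existence_finite2} for existence and Proposition~\ref{proposition:uniqueness} for uniqueness; your version merely spells out the bookkeeping (restriction to subintervals and the choice of $R$) that the paper leaves implicit.
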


\begin{proof}
	The proof follows from Proposition \ref{proposition:uniqueness} and Proposition \ref{proposition:existence_finite2}.  
\end{proof}

\subsection{Contraction mapping principle for an extended vector field $F$.}\label{contraction}
We consider the following Cauchy problem
\begin{align}\tag{DS-1}%\tag{DS$_{1}$}
\label{dynamics:DS_3}
\begin{aligned}
& \dot{x}(t)=\tilde{F}(x(t)),\quad \text{ }t\geq 0,\\
& x(0)=x_{00}\in \hat{\setD}\setminus \{ \bar{z} \}, 
\end{aligned}	
\end{align}
where $\tilde{F}:\ \hilbertX\rightarrow \hilbertX$ is such that $\tilde{F}(x)=F(x)$ for all $x\in \hat{D}$ and $\tilde{F}$ is continuous on $\hilbertX$.

\begin{lemma}(\cite[Lemma 1.2]{ODE_in_Banach_Deimling})\label{lem:ext}
	Let $X,Y$ be Banach spaces, $\Omega\subset X$ closed and $f:\Omega\rightarrow Y$ continuous. Then there is a continuous extension $\tilde{f}:X\to Y$ of $f$ such that $\tilde{f}(X)\subset {\rm conv}f(\Omega)$ (:=convex hull of $f(\Omega)$).
\end{lemma}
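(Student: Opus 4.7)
The plan is to realize $\tilde{f}$ via the classical Dugundji construction, expressing it off $\Omega$ as a locally finite convex combination of values of $f$ on $\Omega$. Since $X\setminus\Omega$ is open in the metric space $X$, it is paracompact, so I can cover it by a locally finite family of open balls $\{V_\alpha\}_{\alpha\in A}$ satisfying a size control of the form
\[
\operatorname{diam} V_\alpha \le \tfrac{1}{2}\operatorname{dist}(V_\alpha,\Omega).
\]
This is obtained by starting from the cover $\{B(x,d(x)/4):x\in X\setminus\Omega\}$ with $d(x):=\operatorname{dist}(x,\Omega)$, and extracting a locally finite open refinement using Stone's theorem on paracompactness of metric spaces. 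For each $\alpha$ I then pick a basepoint $x_\alpha\in\Omega$ satisfying
\[
\|y-x_\alpha\|\le 2\operatorname{dist}(y,\Omega)\quad\text{for every }y\in V_\alpha,
\]
which exists by definition of distance together with the diameter bound on $V_\alpha$. Let $\{\phi_\alpha\}_{\alpha\in A}$ be a continuous partition of unity subordinate to $\{V_\alpha\}$.

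With this in hand, I define
\[
\tilde{f}(x):=\begin{cases} f(x),& x\in\Omega,\\[2pt] \sum_{\alpha\in A}\phi_\alpha(x)\,f(x_\alpha),& x\in X\setminus\Omega,\end{cases}
\]
where the sum is locally finite and hence well defined. The range statement is immediate: on $X\setminus\Omega$ each value is a convex combination of points of $f(\Omega)$, hence lies in $\operatorname{conv} f(\Omega)$; on $\Omega$ trivially $\tilde f(x)=f(x)\in f(\Omega)\subset\operatorname{conv}f(\Omega)$. Continuity on the open set $X\setminus\Omega$ is routine since the sum is locally finite and each $\phi_\alpha$ is continuous.

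The main obstacle, and the only delicate point, is continuity at points $x_0\in\Omega$, in particular at boundary points. Fix $\varepsilon>0$ and, by continuity of $f$ on $\Omega$, choose $\delta>0$ such that $\|f(y)-f(x_0)\|<\varepsilon$ for all $y\in\Omega\cap B(x_0,\delta)$. The key observation, coming from the metric conditions built into the cover, is that if $x\in X\setminus\Omega$ is close to $x_0$, then any $\alpha$ with $\phi_\alpha(x)>0$ forces $x\in V_\alpha$, and for such $\alpha$
\[
\|x_\alpha-x_0\|\le \|x_\alpha-x\|+\|x-x_0\|\le 2\operatorname{dist}(x,\Omega)+\|x-x_0\|\le 3\|x-x_0\|.
\]
Thus once $\|x-x_0\|<\delta/3$ every active basepoint satisfies $x_\alpha\in\Omega\cap B(x_0,\delta)$, so $\|\tilde f(x)-f(x_0)\|\le\sum_\alpha\phi_\alpha(x)\|f(x_\alpha)-f(x_0)\|<\varepsilon$. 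The delicate part of the whole proof is precisely this quantitative coupling between $\operatorname{diam}V_\alpha$ and $\operatorname{dist}(V_\alpha,\Omega)$; once the locally finite refinement with the right metric properties is secured via paracompactness, the boundary continuity estimate and the convex-hull inclusion follow by the bookkeeping above.
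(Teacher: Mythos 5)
The paper offers no proof of this lemma at all---it is quoted verbatim as Lemma 1.2 of Deimling's book, i.e.\ Dugundji's extension theorem---and your argument is a correct rendition of precisely the standard Dugundji construction used there: a locally finite open cover of $X\setminus\Omega$ whose members have diameter controlled by their distance to $\Omega$, basepoints $x_\alpha\in\Omega$, a subordinate partition of unity, and a locally finite convex combination of the values $f(x_\alpha)$, with the boundary-continuity estimate carried out exactly as you describe. The only quibble is quantitative and harmless: refining the cover $\{B(x,d(x)/4)\}$ gives $\operatorname{diam} V_\alpha\le \tfrac{2}{3}\operatorname{dist}(V_\alpha,\Omega)$ rather than $\tfrac12$, so the constants $2$ and $3$ in your subsequent estimates must be enlarged slightly (or the balls shrunk, e.g.\ to radius $d(x)/6$), which changes nothing in the conclusion.
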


% Solutions to \eqref{dynamics:DS_3} are defined in Definition \ref{definition1}
% with replacing $\setD$ by any arbitrary subset $A\subseteq\hilbertX$.
% with function  $\tilde{F}:\ \hilbertX\rightarrow \hilbertX$. Let us note that here we are interested in finding functions $x(\cdot):\ \hilbertX\rightarrow \hilbertX $ such that $x(\cdot)\in C^{1}({\mathcal T},\hilbertX)$.

% \begin{definition} 
% 	\label{definition1_2}
% 	Let 
% 	$$
% 	{\mathcal T}=[t_{0};T), \ t_{0}<T\leq+\infty\ \  \text{or } {\mathcal T}=[t_{0};T],\ t_{0}<T<+\infty.
% 	$$
% 	\textit{Solution} of \eqref{dynamics:DS_3} on interval ${\mathcal T}$
% 	is any %abstract 
% 	function 
% 	$$
% 	x(\cdot)\in C^{1}({\mathcal T},\hilbertX)
% 	%\mathbb{B})
% 	$$
% 	satisfying
% 	\begin{enumerate} 
% 		\item initial condition $x(t_{0})=x_0$,
% 		\item equation $\dot{x}(t)=\tilde{F}(x(t))$ for all $t\in {\mathcal T}$, where the differentiation is understood in the sense of strong derivative on space $\hilbertX$
% 		, where at the boundary point of the interval ${\mathcal T}$, in the case when it belongs to ${\mathcal T}$, the differentiation is understood in the one-sided way.
% 	\end{enumerate}
% \end{definition}
% \begin{proposition}\label{proposition:voltera_solution_2}
% 	$x(\cdot)$ is a solution of \eqref{dynamics:DS_3} on $\mathcal{I}=[t_0,T]$ ($T>t_0$ is arbitrary) if and only if satisfies the condition
% 	\begin{equation}\label{solutions:integral_condition_2}
% 	x(t)=x_0+\int_{t_0}^{t} \tilde{F}(x(s))\, ds,\quad \forall t\in \mathcal{I},
% 	\end{equation}	
% 	where the integral is understood in the sense of Riemann.
% \end{proposition}

\begin{proposition}
	\label{contraction2}
	Let $t_0\ge 0$, $x_0\in \hat{\setD}\setminus \{ \bar{z} \}$. 
	Then there exists $T>0$ such that there exists a solution of \eqref{dynamics:DS_3} on interval $[t_0,t_0+T]$.
\end{proposition}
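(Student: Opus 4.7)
The strategy is to apply the Banach contraction principle to the Picard operator associated with the integral reformulation of \eqref{dynamics:DS_3}. Since $x_0\in\hat{\setD}\setminus\{\bar{z}\}$ and $F$ is locally Lipschitz on $\hat{\setD}\setminus\{\bar{z}\}$, I would first choose $r>0$ and $L>0$ such that $F$ is $L$-Lipschitz on $\hat{\setD}\cap\bar{\mathbb{B}}(x_0,r)$. Using Kirszbraun's extension theorem (which is available because $\hilbertX$ is a Hilbert space) followed by Lemma~\ref{lem:ext}, the restriction $F|_{\hat{\setD}\cap\bar{\mathbb{B}}(x_0,r)}$ admits a continuous extension $\tilde F:\hilbertX\to\hilbertX$, bounded (say $\|\tilde F\|\le \tilde M$ on $\hilbertX$), which is moreover $L$-Lipschitz on $\bar{\mathbb{B}}(x_0,r)$.

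Next, fix $T>0$ so small that $T\tilde M\le r$ and $TL<1$, and consider the complete metric space
\begin{equation*}
X_T:=\bigl\{x\in C([t_0,t_0+T],\hilbertX)\;\bigm|\; x(t)\in\bar{\mathbb{B}}(x_0,r)\text{ for all }t\in[t_0,t_0+T]\bigr\}
\end{equation*}
endowed with the supremum norm. Define the Picard operator
\begin{equation*}
\Phi(x)(t):=x_0+\int_{t_0}^{t}\tilde F(x(s))\,ds,\qquad t\in[t_0,t_0+T].
\end{equation*}
The uniform bound $\|\tilde F\|\le\tilde M$ together with $T\tilde M\le r$ gives $\Phi(X_T)\subset X_T$, while the $L$-Lipschitz property of $\tilde F$ on $\bar{\mathbb{B}}(x_0,r)$ together with $TL<1$ makes $\Phi$ a strict contraction on $X_T$. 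Banach's fixed-point theorem then produces $x^*\in X_T$ with $\Phi(x^*)=x^*$, and by the $\tilde F$-analogue of Proposition~\ref{proposition:voltera_solution} this $x^*$ is a continuously differentiable solution of \eqref{dynamics:DS_3} on $[t_0,t_0+T]$.

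The main obstacle is the very first step. Lemma~\ref{lem:ext} by itself yields only a continuous extension, which is far from sufficient for a contraction argument; and one cannot hope to invoke a Peano-type existence result here since these fail in infinite-dimensional Hilbert spaces. The key that unlocks the proof is the hypothesis $x_0\neq\bar{z}$: together with local Lipschitzness of $F$ on $\hat{\setD}\setminus\{\bar{z}\}$ it produces a genuine Lipschitz constant for $F$ on a whole neighborhood of $x_0$ inside $\hat{\setD}$, and the Hilbert-space structure allows this Lipschitz property to be transported to a full ball in $\hilbertX$ before the global continuous extension of Lemma~\ref{lem:ext} is invoked. Once this localized Lipschitz extension is in hand, the rest of the argument is the standard Picard--Lindel\"of calculation.
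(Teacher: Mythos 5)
Your argument is correct in substance and follows the same skeleton as the paper's proof --- both recast \eqref{dynamics:DS_3} as a fixed-point problem for the Picard operator $x\mapsto x_0+\int_{t_0}^{t}\tilde F(x(\tau))\,d\tau$ and iterate on a small ball of continuous functions around the constant function $x_0$ --- but you diverge at the decisive step of where the contraction constant comes from, and your choice is the sounder one. The paper extends $F$ using only Lemma~\ref{lem:ext}, obtaining a merely continuous, bounded $\tilde F$, and then in its Step~4 asserts $|S[x_n]-S[x_{n-1}]|_{C_0}\le cT\,|x_n-x_{n-1}|_{C_0}$ with $c$ taken to be the sup-norm bound of $\tilde F$; that inequality is a Lipschitz estimate that continuity alone does not supply (as the paper's own introduction observes, continuity of the field cannot yield existence in infinite dimensions). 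You identify precisely this gap and repair it by localizing at $x_0$ --- legitimate because $x_0\neq\bar z$ and $F$ is locally Lipschitz on $\hat{\setD}\setminus\{\bar z\}$ --- and invoking Kirszbraun to carry the Lipschitz constant onto a full ball $\bar{\mathbb{B}}(x_0,r)$ before running the standard Picard--Lindel\"of contraction. Two details deserve tidying. First, ``Kirszbraun followed by Lemma~\ref{lem:ext}'' is redundant as phrased, since Kirszbraun already produces a global extension; what you presumably intend (restrict the Kirszbraun extension to $\bar{\mathbb{B}}(x_0,r)$, then extend continuously and boundedly off that ball) is fine, because only the values on $\bar{\mathbb{B}}(x_0,r)$ enter your fixed-point argument. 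Second, your $\tilde F$ agrees with $F$ only on $\hat{\setD}\cap\bar{\mathbb{B}}(x_0,r)$, whereas \eqref{dynamics:DS_3} stipulates $\tilde F=F$ on all of $\hat{\setD}$; you should either glue your local Lipschitz extension to the Lemma~\ref{lem:ext} extension away from $x_0$, or remark that the values of $\tilde F$ outside $\bar{\mathbb{B}}(x_0,r)$ do not affect the trajectory you construct. Neither point undermines the argument, and on balance your proof is more complete than the one in the paper.
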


\begin{proof}%[Proof of \ref{contraction2}]
	For a given $x\in C([t_0,t_0+T];\hilbertX)$, 
	define $S[x]$ to be the function on $[t_0,t_0+T]$, given by
	\begin{align}\label{eq:def:sol_2}
	S[x](t):= x_0+\int_{t_0}^{t}\tilde{F}(x(\tau))\,d\tau,\quad t\in [t_0,t_0+T],
	\end{align}
	where $\tilde{F}$ is an extension of $F$ given by Lemma~\ref{lem:ext}. In the following, the boundedness of $F$ or $\tilde{F}$ will be used as per their restrictive sense.
	\begin{itemize}
		\item[Step 1.] If $x\in C([t_0,t_0+T];\hat{D})$, 
		then $S(x)$  makes sense, since the right hand side is well defined.
		\item[Step 2.] Let us prove that $S[x](\cdot)\in C([t_0,t_0+T];\hilbertX)$ for any $T>0$ and for $x\in C([t_0,t_0+T];\hilbertX)$.
		Assume $t_1,t_2\in [t_0,t_0+T]$ with $t_1<t_2$. It is evident that
		\begin{align}\label{eq:def:sol:cont_2}
		S[x](t_2)= S[x](t_1)+\int_{t_1}^{t_2}\tilde{F}(x(\tau))\,d\tau.
		\end{align}
		Then the continuity of $S[x]$ gives us as $t_2\to t_1$,
		\begin{align*}
		\|S[x](t_2)-S[x](t_1)\|_{\hilbertX}=\left\|\int_{t_1}^{t_2}\tilde{F}(x(\tau))\,d\tau\right\|_{\hilbertX}\le \max_{\tau\in[t_0,t_0+T]}\|\tilde{F}(x(\tau))\|_{\hilbertX}\cdot|t_2-t_1|.
		\end{align*}
		Thus $S:C([t_0,t_0+T];\hilbertX)\longrightarrow C([t_0,t_0+T];\hilbertX)$. 
		\item[Step 3.] 
		
		Denote $C_0:=C([t_0,t_0+T];\hilbertX)$. Consider the following form of a ball in $C_0$, where we intend to look for a fixed point.
		\begin{align*}
		C_{0D}:=\left\{x(t)\in  C_0:\quad|x-x_0|_{C_{0}}\equiv \max_{t\in[t_0,t_0+T]}\|x(t)-x_0 \|_\hilbertX\le 1/2,\ x_0\in \hat{D} \right\}.
		\end{align*}
		Clearly, $C_{0D}(\subseteq C_0)$ is a complete metric space with the metric induced by the norm of $C_0$.
		Let us show that for choosing $T$ small enough the operator $S$ maps $C_{0D}$ into itself and has a fixed point.

		We have, by Step 2, $S[x](\cdot)\in \ C_0$, whenever $x(\cdot)\in C_{0D}$. We now show that $S[x](\cdot)\in C_{0D}$. It follows from~\eqref{eq:def:sol:cont_2} that
		\begin{align*}
		|S[x]-x_0|_{C_{0}}=\max_{t\in[t_0,t_0+T]}\|S[x](t)-x_0\|_{\hilbertX}=\max_{t\in[t_0,t_0+T]}\left\|\int_{0}^{t}\left(\tilde{F}(x(\tau))\right)\,d\tau\right\|_\hilbertX\\
		\le \max_{\tau\in[t_0,t_0+T]}\|\tilde{F}(x(\tau))\|_{\hilbertX} T=: cT,
		\end{align*}
		
		Therefore, for a choice of $T\le 1/2c$,
		\[ |S[x]-x_0|_{C_{0}}\le 1/2.\]
		Hence, $S[x](\cdot)\in C_{0D}$ implies $S:C_{0D}\rightarrow C_{0D}$ for every $T\le 1/2c$.
		\item[Step 4] We shall show now that a sequence $\{ x_n(\cdot)\}_{n\ge 1}\subseteq C_{0D}$ is a Cauchy sequence. Lets start with the initial point $\{x_0\}\in \hat{D}$ be given and define $x_0(\cdot):=x_0$.  Denote $x_1(\cdot):=S[x_0](\cdot)$, and that $x_{n+1}(\cdot):=S[x_n](\cdot),\ n=1,2,\dots$. 
		
		Moreover, the followings hold successively.
		\begin{align*}
		&|x_{n+1}-x_{n}|_{C_{0}}=|S[x_n]-S[x_{n-1}]|_{C_{0}}\le cT|x_n - x_{n-1}|_{C_{0}}\\
		&\le \cdots\le (cT)^{n}|x_1-x_0|_{C_{0}}=(cT)^{n}\max_{t\in[0,T]}\|x_1(t)-x_0\|_{\hilbertX}\le c^n T^{n+1}\|F(x_{0})\|_{\hilbertX}
		\end{align*}
		Let $m, n\in\mathbb{N}$ such that $m > n$ and $cT=\delta\in[0,1]$
		%\footnote{We can always find a $\delta\ge 0$ to choose the diameter of $D$ such %that $r\ge \delta\ge 0$.}, 
		then
		\begin{align*}
		&|x_m - x_n|_{C_{0}}\le |x_m - x_{m-1}|_{C_{0}}+|x_{m-1} - x_{m-2}|_{C_{0}}+\cdots+|x_{n+1}-x_n|_{C_{0}}\\
		&\le  (\delta^{m}+\delta^{m-1}+\cdots+\delta^{n+1})\frac{\|F(x_{0})\|_{\hilbertX}}{c}
		= \delta^{n+1}\frac{\|F(x_{0})\|_{\hilbertX}}{c}\sum_{k=0}^{m-n}\delta^k\\
		&\le \delta^{n+1}\frac{\|F(x_{0})\|_{\hilbertX}}{c}\sum_{k=0}^{\infty}\delta^k
		\stackrel{\delta<1}{=} \frac{\delta^{n+1}\|F(x_{0})\|_{\hilbertX}}{c(1-\delta)}.
		\end{align*}
		Let $\varepsilon>0$. Moreover, since $\delta\in [0, 1)$, we can find a large number $N \in\mathbb{N}$ so that
		\[
		\delta^{N+1}< \varepsilon c(1-\delta)/\|F(x_{0})\|_{\hilbertX}.
		\]
		Therefore, for $m,n>N\in\mathbb{N}$, 
		\[
		|x_m - x_n|_{C_{0}}\le \varepsilon.
		\]
		Hence, we have that the sequence $\{x_n(\cdot)\}_{n\ge 1}\subseteq C_{0D}$ is Cauchy. Therefore, $\{x_n(\cdot)\}_{n\ge 1}$ converges to some $\bar{x}(\cdot)\subseteq C_{0D}$, where  $\bar{x}(\cdot)$ satisfies
		\begin{align}%\label{eq:def:sol}
		\bar{x}(t)= x_0+\int_{0}^{t}\tilde{F}(\bar{x}(\tau))\,d\tau,\quad \forall t\in [t_0,t_0+T].
		\end{align}
		By Proposition \ref{proposition:voltera_solution}, $\bar{x}(\cdot)$ is a solution of \eqref{dynamics:DS_3} for $t\in [t_0,t_0+T]$.
		
	\end{itemize}  
\end{proof}

\begin{remark}\label{remark:fixpointsoutside}
	The proof of the above proposition will not work in the formulation of $\tilde{F}$ defined only on set $\hat{\setD}$. This comes from the fact that the operator
	\begin{equation*}
	S[x](\cdot):\ \hat{\setD} \rightarrow C([t_0,t_0+T],\hilbertX)
	\end{equation*}
	may map a function $x(\cdot)$ outside of $\hat{\setD}$ for which we cannot apply  {\rm Step 4.} in the proof. However, in the case when $x_0\in\text{int}\, \hat{\setD}$,  the following corollary holds.
\end{remark}
\begin{corollary}
	We have the following relationships between \eqref{dynamics:DS_2} and \eqref{dynamics:DS_3}:
	\begin{enumerate}
		\item if $x_0\in \text{int}\, \hat{\setD}$, then there exists a function $x(\cdot)\in C^1([t_0,t_0+T],\hat{\setD})$, which is a unique solution of \eqref{dynamics:DS_2} and \eqref{dynamics:DS_3} on $[t_0,t_0+T]$ for some $T>0$;
		\item if $x_0\in \partial \hat{\setD}$ and assumption \ref{assumption:A2} holds, then the solution of \eqref{dynamics:DS_2} is unique on $[t_0,t_0+T_1]$ for some $T_1>0$ and the solution of \eqref{dynamics:DS_3} exists on $[t_0,t_0+T_2]$ for some $T_2>0$.
		% \item if $x_0\in \partial \hat{\setD}$ and assumption \ref{assumption:A2} does not hold then  the solution of \eqref{dynamics:DS_3} exists on $[t_0,t_0+T_2]$ for some $T_2>0$,
	\end{enumerate}
	% Suppose that $x_0\in \text{int}\, \hat{\setD}$. Then there exists a unique global solution of \eqref{dynamics:DS_3} (and in fact, under assumption of \ref{assumption:A2} it is equivalent to the solution of \eqref{dynamics:DS_2} ) in the class $\mathbb{B}_{t_0,x_0,T}^R$ for some $T>0$ depending on $R:={\rm dist(x_0,\partial\hat{\setD})}>0$.
\end{corollary}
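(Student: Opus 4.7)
The plan is to exploit the fact that the extension $\tilde F$ supplied by Lemma \ref{lem:ext} agrees with $F$ on $\hat\setD$ and, because $\tilde F(\hilbertX)\subset\operatorname{conv} F(\hat\setD)$, inherits the uniform bound $\|\tilde F\|\le M$ on all of $\hilbertX$. With this in hand, any short-time solution of \eqref{dynamics:DS_3} starting well inside $\hat\setD$ cannot escape $\hat\setD$ in a controlled time window, so the two Cauchy problems agree locally.

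For part (1), I first choose $\rho>0$ small enough that $\bar{\mathbb B}(x_0,\rho)\subset\hat\setD\setminus\{\bar z\}$; this is possible because $x_0\in\operatorname{int}\hat\setD$ and $x_0\neq\bar z$ (here we use that $\bar z$ is a single point). Proposition \ref{contraction2} then produces a solution $x(\cdot)$ of \eqref{dynamics:DS_3} on some $[t_0,t_0+T']$. The Volterra representation from Proposition \ref{proposition:voltera_solution}, combined with $\|\tilde F\|\le M$, yields $\|x(t)-x_0\|\le M(t-t_0)$. Shrinking $T\le\min\{T',\rho/M\}$ forces $x(t)\in\bar{\mathbb B}(x_0,\rho)\subset\hat\setD$, where $\tilde F(x(t))=F(x(t))$; hence $x$ is simultaneously a solution of \eqref{dynamics:DS_2} on $[t_0,t_0+T]$. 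Uniqueness for \eqref{dynamics:DS_2} is furnished by Proposition \ref{proposition:uniqueness}, and the same containment argument shows that every solution of \eqref{dynamics:DS_3} through $x_0$ remains in $\hat\setD$ on $[t_0,t_0+T]$, hence is also a solution of \eqref{dynamics:DS_2}; uniqueness therefore transfers.

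For part (2), when $x_0\in\partial\hat\setD$ the interior trick is unavailable for \eqref{dynamics:DS_3}, so the two systems are handled separately. Corollary \ref{lemma1} directly supplies existence and uniqueness of a solution to \eqref{dynamics:DS_2} on some $[t_0,t_0+T_1]$; this is precisely where the forward-invariance hypothesis \ref{assumption:A2} is essential, so that the Euler iterates used in Proposition \ref{proposition:existence_finite2} remain in $\hat\setD$. For \eqref{dynamics:DS_3}, Proposition \ref{contraction2} provides existence on some $[t_0,t_0+T_2]$ without any interior assumption on $x_0$.

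The main obstacle I anticipate is the \eqref{dynamics:DS_3} uniqueness in part (1): outside of $\hat\setD$ the extension $\tilde F$ is merely continuous and offers no Lipschitz control, so a naive Gronwall argument cannot handle arbitrary trajectories. The resolution is exactly the containment in $\hat\setD$ forced by the interior condition together with the uniform bound on $\tilde F$, which confines every candidate trajectory to the region where $\tilde F=F$ is locally Lipschitz. This mechanism breaks down at boundary points, and that breakdown is why part (2) does not claim \eqref{dynamics:DS_3} uniqueness, in line with the caveat recorded in Remark \ref{remark:fixpointsoutside}.
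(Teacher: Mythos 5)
Your proposal is correct, but it takes a genuinely different route from the paper. The paper proves part (1) by re-running the Picard-type iteration of Proposition \ref{contraction2} directly for $F$ on the ball $C_{0D}$ of radius $\rho/2$ about $x_0$ contained in $\hat{\setD}$ (with $\rho=\dist(x_0,\partial\hat{\setD})>0$), then adds two extra steps verifying that this ball of curves is closed and that the limit of the iterates is a fixed point of $S$; for the boundary case it simply falls back on the proof of Proposition \ref{contraction2}. You instead take the solution of \eqref{dynamics:DS_3} already furnished by Proposition \ref{contraction2}, use the uniform bound $\|\tilde F\|\le M$ inherited from $\tilde F(\hilbertX)\subset\operatorname{conv}F(\hat{\setD})$ to get the confinement estimate $\|x(t)-x_0\|\le M(t-t_0)$, and conclude that on $[t_0,t_0+\min\{T',\rho/M\}]$ the trajectory never leaves $\bar{\mathbb B}(x_0,\rho)\subset\hat{\setD}$, where $\tilde F=F$; uniqueness is then imported from Proposition \ref{proposition:uniqueness} and transferred back to \eqref{dynamics:DS_3} because \emph{every} candidate trajectory of \eqref{dynamics:DS_3} is confined in the same way. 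This buys you something the paper's fixed-point argument does not deliver cleanly: uniqueness among all solutions of \eqref{dynamics:DS_3}, not merely within the ball $C_{0D}$ where the iteration lives, and it avoids repeating the convergence analysis. Two small points worth making explicit: Proposition \ref{proposition:uniqueness} carries assumptions \ref{assumption:A1} and \ref{assumption:A3}, which part (1) of the corollary does not state, so either flag these as standing hypotheses or note that, since you chose $\rho$ so that $\bar{\mathbb B}(x_0,\rho)\subset\hat{\setD}\setminus\{\bar z\}$, the local Lipschitzness of $F$ there plus Gronwall gives uniqueness directly without \ref{assumption:A1} or \ref{assumption:A3}; and in part (2) your appeal to Corollary \ref{lemma1} matches the paper's intent while being more precise about where \ref{assumption:A2} enters (forward invariance of the Euler polygons).
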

\begin{proof}
	The proof will follow the lines of the proof of Proposition~\ref{contraction2} up to Step 3 by replacing $\tilde{F}$ with $F$ and then we proceed as follows.
	
	We consider the following two cases.
	%Since $D$ is a closed, convex and bounded subset of $H$, we can assume that there exists $0<R<\infty$ such that
	%\[
	%R:={\rm diam}(D)=\|\bar{w}-\bar{z}\|_H=\sup\{\|x-y\|_H :\ x,y \in D\}.
	%\]
	%%%%%%%%%%%%%%%%%%%%%%%%%%%
	\begin{itemize}
		\item[Case 1.] Suppose $x_0\in \hat{D}$ such that $\rho:=\inf\limits_{y\in \partial\hat{D}}\|x_0-y\|_{H}=:{\rm dist}(x_0,\partial\hat{D})>0$.
		\item[Case 2.] Suppose $x_0\in \hat{D}$ such that $\rho=0$. Then one can follow the proof of Proposition~\ref{contraction2}.
	\end{itemize}
	We look for a solution to~\eqref{dynamics:DS_2} for Case 1.
	Let us consider
	\begin{align*}
	C_{0D}:=\left\{x(t)\in  C_0:\quad|x-x_0|_{C_{0}}\equiv \max_{t\in[t_0,t_0+T]}\|x(t)-x_0 \|_\hilbertX\le \rho/2,\ x_0\in \hat{D} \right\}.
	\end{align*}
	\bigskip
	Given the fact that $x_0\in \hat{D}:=\{ x\in D \mid \|x-\bar{w}\|^2\geq r >0 \}$, it implies $\rho:=||x_0 -\bar{w}||>0$.
	Let us consider the following two possible cases for fixed $r>0$,
	\begin{itemize}
		\item[(i)] if $\rho>2r$, then consider the ball $B_{\frac{\rho}{2}}(x_0)\subset \hat{D}$;
		\item[(ii)] if $\rho<2r$, then consider the ball $B_{r-\frac{\rho}{2}}(x_0)\subset \hat{D}$.
	\end{itemize}
	Thereafter, as in Step 4 of Proposition~\ref{contraction2} we  show the existence of Cauchy sequence in $C_{0D}$.
	\begin{itemize}
		\item[Step 5.] Moreover, $C_{0D}$ is a closed subset of $C_0$. Indeed, it is an implication of the facts of continuity of $S$ and 
		\begin{align*}
		x_n\in D\Rightarrow \lim\limits_{n\to\infty}x_n=:\hat{x}\in D,\ \text{since}\ D\ \text{is closed in}\ H.
		\end{align*}
		% and $\hat{x}+\delta F(\hat{x}) \stackrel{F\ \text{is  locally Lipschitz}}{=} \lim\limits_{n\to\infty}(x_n+\delta F(x_n))$. Hence, $\hat{x}\in E$. 
		
		\item[Step 6.] Finally, $D\ni\hat{x}$ must be a fixed point of $S:C_{0D}\to C_{0D}$. Indeed,
		\begin{align*}
		\hat{x}=\lim\limits_{n\to\infty}x_n=\lim\limits_{n\to\infty}S[x_{n-1}]\stackrel{\text{continuity of}\ S}{=}S\left[\lim\limits_{n\to\infty}x_{n-1}\right]=S[\hat{x}].
		\end{align*}
	\end{itemize}
	Hence, we reach at the solution to~\eqref{dynamics:DS_2}.  
\end{proof}

In the following example we show that the existence of solutions of \eqref{dynamics:DS_2} is not guaranteed without assumption \ref{assumption:A2}, however there are still solutions of \eqref{dynamics:DS_3} due to Proposition \ref{contraction2}.

\begin{example} %(without assumption \ref{assumption:A2})
	Let $\hilbertX=\mathbb{R}^2$, $\bar{w}=(-1,0)$, $\bar{z}=(1,0)$, $\hat{\setD}=\bar{B}((0,0),1)\setminus B((-1,0),1)$ and let $F:\ \hat{\setD}\rightarrow \hilbertX$ be defined as
	\begin{equation*}
	F((x_1,x_2))=(1-x_1,0),\quad (x_1,x_2)\in \hat{\setD}
	\end{equation*}
	Then assumption \ref{assumption:A1} and \ref{assumption:A3} is satisfied.
	Consider $x_0=x(0)=(0,-1)$. Then there is no solution of \eqref{dynamics:DS_2}. By extending $F(x)$ in a continuous way:
	\begin{equation*}
	\tilde{F}((x_1,x_2))=(1-x_1,0),\quad (x_1,x_2)\in \hilbertX,
	\end{equation*}
	we obtain that one solution of \eqref{dynamics:DS_3} is $x(t)=(1-e^{-t},-1)$.
\end{example}

The following example shows that by considering \eqref{dynamics:DS_3} under assumption \ref{assumption:A2} we may loose the uniqueness of  solutions in the sense of Definition \ref{definition1}.

\begin{example}\label{ex:nonunique}%(with assumption B) 
	Let $\hilbertX=\mathbb{R}^2$, $\bar{w}=(0,-1)$, $\bar{z}=(1,0)$, $\hat{\setD}=[0,1]\times[-1,0]\setminus B((0,-1),1)$ and let $F:\ \hat{\setD}\rightarrow \hilbertX$ be defined as
	\begin{equation*}
	F((x_1,x_2))=(1-x_1,0-x_2),\quad (x_1,x_2)\in \hat{\setD}
	\end{equation*}
	Then assumptions \ref{assumption:A1}, \ref{assumption:A2} and \ref{assumption:A3} are satisfied.
	Consider $x_0=x(0)=(0,0)$.  By extending $F(x)$ in the continuous way:
	\begin{equation*}
	\tilde{F}((x_1,x_2))=\left\{\begin{array}{ll}
	(1-x_1,0-x_2)  & (x_1,x_2)\in \hat{\setD} , \\
	(1-x_1,x_1) & (x_1,x_2)\in \Gamma:=\{ (1-e^{-s},e^{-s}+s-1), s\in (0,1]\},\\
	\text{continuous} & \text{otherwise on } \hilbertX.
	\end{array}\right.
	\end{equation*}
	We obtain that there are more solutions than one of the system \eqref{dynamics:DS_3}. For example:
	\begin{align*}
	&(x_1(t),x_2(t))=(1-e^{-t},e^{-t}+t-1),\quad t\in[0,1],\\
	&(x_1(t),x_2(t))=(1-e^{-t},0),\quad t\in[0,1].
	\end{align*}
\end{example}

\section{Extendability of solutions to \texorpdfstring{\eqref{dynamics:DS}}{dynamicsDS2}}
\label{section:extendability}
In this section we prove Theorem \ref{proposition_prolongation}. 
%we provide conditions for the 
%we provide the proof of theorem of existence and uniqueness of the solution of the problem \eqref{dynamics:DS_2} on $(t_0,+\infty)$ for any $t_0\geq 0$, see Theorem \ref{proposition_prolongation}. 
The proof is based on two lemmas, Lemma \ref{lemma2}  and Lemma \ref{Lemma:limit_Cauchy} (see Appendix).   
The proposed approach follows the lines of Lecture 3 of the lecture notes  \cite{lectures}. The crucial assumptions are \ref{assumption:A1}, \ref{assumption:A2} and \ref{assumption:A3} (see Lemma \ref{lemma2} below). For more general results and examples on the extendability of solutions, see e.g., \cite{MR2034199} and the references therein.

Let
$$
{\mathcal T}=[t_{0};T), \ t_{0}<T\leq+\infty\ \  \text{or } {\mathcal T}=[t_{0};T],\ t_{0}<T<+\infty.
$$
%we denote by ${\mathcal T}$ a closed interval $[t_0,T]$ or a half-closed interval $[t_0,T)$.

As a consequence of the results of Proposition  \ref{proposition:existence_finite2} and Corollary \ref{lemma1} we have the following 'non-branching' result.

\begin{lemma}\label{lemma2}
	Suppose that assumptions \ref{assumption:A1}, \ref{assumption:A2} and \ref{assumption:A3} are satisfied. Let $x_1(t)$, $x_2(t)$ be solutions to problem \eqref{dynamics:DS_2}  in the sense of Definition \ref{definition1}
	on ${\mathcal T}_1$,  ${\mathcal T}_2$, respectively. Then one of these solutions is a prolongation of the other (in particular, they coincide if $\mathcal{T}_1=\mathcal{T}_2)$.
\end{lemma}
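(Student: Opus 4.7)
The plan is to extract the statement as an almost immediate corollary of Proposition \ref{proposition:uniqueness}, which already furnishes uniqueness of a solution of the Cauchy problem on every closed interval where one exists.

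First I would record the structural facts that make this reduction possible. By Definition \ref{definition1}, both $x_1$ and $x_2$ satisfy the same initial condition $x_1(0)=x_2(0)=x_0\in \hat{\setD}\setminus\{\bar{z}\}$. Since $\mathcal{T}_1$ and $\mathcal{T}_2$ share the left endpoint $0$ and each is of the form $[0,T)$ or $[0,T]$, one must contain the other; without loss of generality assume $\mathcal{T}_1\subseteq \mathcal{T}_2$. It then suffices to show that $x_1$ and $x_2$ agree on $\mathcal{T}_1$: this makes $x_2$ a (proper, if $\mathcal{T}_1\subsetneq\mathcal{T}_2$) prolongation of $x_1$, and yields coincidence when $\mathcal{T}_1=\mathcal{T}_2$.

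The core step is then as follows. Fix any closed subinterval $[0,t]\subseteq \mathcal{T}_1$. By Remark \ref{remark:restriction}, the restrictions $x_1|_{[0,t]}$ and $x_2|_{[0,t]}$ are both solutions of \eqref{dynamics:DS_2} on $[0,t]$ with the common initial value $x_0$. Under assumptions \ref{assumption:A1} and \ref{assumption:A3}, Proposition \ref{proposition:uniqueness} applies and yields $x_1(s)=x_2(s)$ for every $s\in[0,t]$. Letting $t$ range over $\mathcal{T}_1$ — taking $t=\sup\mathcal{T}_1$ directly when this supremum belongs to $\mathcal{T}_1$, and otherwise taking a sequence $t_n\nearrow \sup\mathcal{T}_1$ and using continuity of both trajectories at every interior point — we conclude $x_1\equiv x_2$ on $\mathcal{T}_1$, as required.

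The main obstacle, if one can call it such, is purely bookkeeping: one must verify that each compact subinterval considered lies in both $\mathcal{T}_1$ and $\mathcal{T}_2$ and treat the open-versus-closed right endpoint of $\mathcal{T}_1$ separately. The genuine work — in particular the delicate splitting into the cases where the two trajectories meet at $\bar{z}$ versus at an ordinary point of local Lipschitzness — is already carried out inside Proposition \ref{proposition:uniqueness}, so nothing new needs to be proved here.
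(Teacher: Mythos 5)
Your argument is correct, but it is not the route the paper takes, so a comparison is in order. You reduce the lemma to Proposition \ref{proposition:uniqueness} by observing that any two intervals of the admitted form with common left endpoint are nested, restricting both trajectories to an arbitrary closed subinterval $[0,t]\subseteq\mathcal{T}_1$ (legitimate by Remark \ref{remark:restriction}), invoking the global uniqueness on closed intervals, and exhausting a half-open $\mathcal{T}_1$ by such subintervals. The paper instead argues by contradiction: it introduces the set $\mathcal{T}^{\neq}$ where the trajectories differ, shows it is open in $\mathcal{T}_1\cap\mathcal{T}_2$ and that $T^{*}=\inf\mathcal{T}^{\neq}$ is not in it, and then applies the \emph{local} existence--uniqueness result of Corollary \ref{lemma1} to the Cauchy problem restarted at $T^{*}$, which requires verifying the radius bound \eqref{formula:13} via the boundedness of $\hat{\setD}$ because that corollary gives uniqueness only within the class $\mathbb{B}^{R}_{t_0,x_0,T}$. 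Your reduction is shorter, avoids the bookkeeping with the restricted class entirely, and, as a side benefit, does not need to treat separately the possibility that the trajectories first branch at the point $\bar{z}$ (where Corollary \ref{lemma1} is not directly applicable since it assumes the initial value lies in $\hat{\setD}\setminus\{\bar{z}\}$), because that case is already absorbed into Case 1 of the proof of Proposition \ref{proposition:uniqueness}. What the paper's longer argument buys is essentially independence from the full strength of Proposition \ref{proposition:uniqueness}: it only consumes the local statement near the putative branching time. Both proofs are valid; yours is the more economical given that Proposition \ref{proposition:uniqueness} is stated for arbitrary closed intervals.
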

\begin{proof}
	On the contrary, suppose that
	\begin{equation*}
	x_1(t) \not\equiv x_2(t) \quad \text{on} \ \mathcal{T}_1\cap \mathcal{T}_2.
	\end{equation*}
	Consider the set
	\begin{equation*}
	\mathcal{T}^{\neq } := \{ t\in  \mathcal{T}_1\cap \mathcal{T}_2 \mid x_1(t)\neq x_2(t) \}.
	\end{equation*}
	Let us note that $t_0\notin \mathcal{T}^{\neq}$ (by initial condition of \eqref{dynamics:DS_2}). Furthermore, the set $\mathcal{T}^{\neq}$ is open in the set $\mathcal{T}_1\cap \mathcal{T}_2$, because it is an inverse image of $(t_0,+\infty)$ under continuous mapping $t\rightarrow \|x_1(t)-x_2(t)\|$ defined on $\mathcal{T}_1\cap \mathcal{T}_2$.
	
	Put
	\begin{equation*}
	T^*=\inf \mathcal{T}^{\neq}.
	\end{equation*}
	Let us note that $T^*\notin \mathcal{T}^{\neq}$ (hence $x_1(T^*)=x_2(T^*)$). Indeed, if $T^*=t_0$ then, $t_0\notin T^{\neq}$ because $x_1(t_0)=x_2(t_0)$.
	%	it was shown above\footnote{specific location or the initial fact}. 
	If $T^*>t_0$, then $T^*$ is a boundary point of $\mathcal{T}^{\neq}$, so $T^*\notin \mathcal{T}^{\neq}$  since $\mathcal{T}^{\neq}$ is open in $\mathcal{T}_1\cap \mathcal{T}_2$. This means that in any right-hand side half-neighbourhood\footnote{By the right-hand side half-neighbourhood of a given $t\in \mathbb{R}$ we mean an interval in a form $[t,\alpha)$ for any $\alpha>t$.} of the point $T^*$ there exists $t_1>T^*$ such that $t_1\in \mathcal{T}^{\neq}\subsetneq \mathcal{T}_1\cap \mathcal{T}_2$, and the intersection of this right-hand side half-neighbourhood with $\mathcal{T}^{\neq}$ is nonempty. 
	
	Take any $\alpha>T^{*}$ and   $t_1$,  $t_1\in \mathcal{T}^{\neq}\cap [T^{*},\alpha)$.
	By Remark \ref{remark:restriction}, functions $x_1(t)$, $x_2(t)$ are solutions to Cauchy problem
	\begin{equation}\label{forumla:11}
	\left\{\begin{array}{ll}
	\dot{x}(t)=F(x(t)), & t>T^*\\
	x(T^*)=x_1(T^*)
	\end{array}\right.
	\end{equation}
	on interval $[T^*,t_1]$. %By the continuity of $x_1(t)$, $x_2(t)$ we have
	Since  $x_1(t), x_2(t)\in\hat{\setD}$ for all $t\in [T^*,t_1]$ and the set $\hat\setD$ is bounded, we have
	\begin{equation}\label{formula:12}
	R_{12}=\max_{i=1,2} \sup_{t\in [T^*,t_1]} \|x_i(t)-x_1(T^*)\|< +\infty.
	\end{equation}
	By Corollary \ref{lemma1}%\footnote{Then we may need to mention Assumptions (A), (B) and (C)}
	, there exists $T^\prime>t_0$, such that for any $T\in (t_0,T^\prime]$, solution of the Cauchy problem \eqref{forumla:11} on interval $[T^*,T^*+T]$ satisfying
	\begin{equation}\label{formula:13}
	\|x(t)-x_1(T^*)\|\leq R_{12}
	\end{equation}	
	is unique. Taking $T=\min\{ T^\prime, t_1-T^*  \}$  
	we come to a contradiction with Corollary \ref{lemma1},  because, by \eqref{formula:12} the condition \eqref{formula:13} holds both for $x_1(t)$ and $x_2(t)$, but the functions  $x_1(t)$ and $x_2(t)$ are different in any right-hand side half-neighbourhood of  $T^*$.  
	%get a contradiction because, by \eqref{formula:12} the condition \eqref{formula:13} holds, but $y_1\neq y_2$ in any right-hand side half-neighbourhood of $T^*$.
\end{proof}

%Before proving our main result, we prove the following elementary lemma.

Now we are ready to prove  Theorem~\ref{proposition_prolongation}.

\begin{proof}[of Theorem~\ref{proposition_prolongation}]
	%{\color{red}By Proposition \ref{contraction}, or }
	By Corollary \ref{lemma1}, there exists solution of  problem \eqref{dynamics:DS}  on some interval $[t_0, T] $ ($T>t_0$) in the class $\mathbb{B}_{t_0,x_{0},T}^R$ for some $R>0$. By Lemma \ref{lemma2}, for any two solutions of our problem  \eqref{dynamics:DS}  on different intervals, one is the prolongation of the other. 
	
	Consider now, for any $T>t_0$, all functions from $C^1([t_0,T],\hat{\setD})$. Among these functions there exist solutions of  problem \eqref{dynamics:DS}  or not. Put
	\begin{align}
	\begin{aligned}
	&\mathbb{T}=\{ T>t_0\mid \exists \text{ solution to } \eqref{dynamics:DS} \text{ from }  C^1([t_0,T],\hat{\setD}) \},\\
	&T_0=\sup \mathbb{T}.
	\end{aligned}
	\end{align} 
	If $T_0=+\infty$, there exists  solution $\tilde{x}(t)\in  C^1([t_0,+\infty),\hat{\setD})$ to problem \eqref{dynamics:DS}. Indeed, by taking a monotone increasing sequence $T_n\to+\infty$ and the corresponding sequence of solutions $\{x_n(t)\}$, by Lemma \ref{lemma2} we get, for all $n\in \mathbb{N}$ solution $x_{n+1}$ is the prolongation of $x_n$. Hence, the function
	\begin{equation*}
	\tilde{x}(t)=\left\{ \begin{array}{lll}
	x_n(t), & t\in [T_{n-1},T_n), & n\geq 2\\
	x_1(t), & t\in [t_0,T_1)
	\end{array}\right.
	\end{equation*}
	is a solution defined on $[t_0,+\infty)$. Other solutions (which do not coincide with the restrictions of $\tilde{x}(t)$ on smaller intervals) do not exist by Lemma \ref{lemma2}. In the rest of the proof, we show that this is the only possible case.
	
	Consider now $T_0<+\infty$. Then two cases are possible:
	\begin{enumerate}[label=(\alph*)]
		\item \label{case:a} $T_0\in  \mathbb{T}$,
		\item \label{case:b} $T_0\notin  \mathbb{T}$.
	\end{enumerate}
	In case \ref{case:a} there exists a solution $x(\cdot)\in C^1([t_0,T_0],\hat{\setD})$ to problem \eqref{dynamics:DS}. But then, by Corollary \ref{lemma1}, applied to our problem  \eqref{dynamics:DS} with $t_0=T_0$ solution can be extended beyond $T_0$ 
	and both one-sided derivatives $\dot{x}_{-} (T_0)$ and $\dot{x}_{+} (T_0)$ exist and both equal $F(x(T_0))$: left - by the definition of solutions on $[t_0,T_0]$, right - by the definition of solution to our problem with the beginning of the interval from $T_0$. As a consequence, we get a solution on a larger interval and arrive to a contradiction with the definition of $T_0$. This excludes case \eqref{case:a}.
	
	In case \ref{case:b}, by the arguments analogous to the case $T_0=+\infty$, we get the existence and uniqueness of solutions $x(t)$ of \eqref{dynamics:DS}  on the semi-interval $[t_0,T_0)$. Case \ref{case:b} splits in two subcases:
	\begin{enumerate}
		\item \label{case:b1} $\limsup _{t\rightarrow T_0^{-}} \|x(t)\|=+\infty$ (i.e. solution is unbounded in any left-sided interval of $T_0$),
		\item \label{case:b2}  $\limsup _{t\rightarrow T_0^{-}} \|x(t)\|<+\infty$.
	\end{enumerate} 
	
	The subcase \ref{case:b1} is impossible in view of the boundedness of the set  $\hat \setD$. Now we show that the subcase \ref{case:b2} is also impossible.	
	%We show that subcase \ref{case:b2} is impossible. 
	Indeed, let the function $x(t)$ be bounded on the whole half-interval $[0,T_0)$:
	\begin{equation*}
	\exists C\geq 0 \ \forall t\in [t_0, T_0)\ \|x(t)\|\leq C.
	\end{equation*}
	We have
	\begin{equation*}
	\forall t\in  [t_0, T_0)\ \|F(x(t))\|\leq M.
	\end{equation*}
	However, from the equation \eqref{dynamics:DS}, it follows that %the derivative $\dot{x}(t)$ is bounded in norm by $M$ for $t\in  [0, T_0)$: $\|\dot{x}(t)\|\leq M$. 
	the function $x(t)$ is Lipschitz continuous with a constant $M$ on $(t_0,T_0)$, since $\|\dot x(t)\|\le M$ for all $t\in (t_0,T_0)$.
	%	Consequently, the function $x(t)$ is Lipschitz on $ [0, T_0)$   since for all $t_1,t_2\in  [0, T_0)$ there exists $c\in (\min\{t_1,t_2\},\max\{t_1,t_2\})$ such that
	%		\begin{equation*}
	%		\|x(t_1)-x(t_2)\|\leq \|\dot{x}(c)\|\|t_1-t_2\|\leq M\|t_1-t_2\|.
	%		\end{equation*}	
	Hence, by Lemma \ref{Lemma:limit_Cauchy} (see Appendix), there exists the limit
	\begin{equation*}
	Y_0=\lim_{t_\rightarrow T_0^-} x(t).
	\end{equation*}
	%	Indeed, on the contrary there exists two sequences $t_1^n, t_2^n\in (0,T_0)$, $t_1^n\rightarrow T_0^-$, $t_2^n\rightarrow T_0^-$ such that 
	%	\begin{equation*}
	%	x(t_1^n)\rightarrow Y^1, \quad  	x(t_2^n)\rightarrow Y^2, \quad Y^1\neq Y^2.
	%	\end{equation*}
	%	Then for all $\varepsilon>0$, there exists $N\in \mathbb{N}$ such that for all $n>N$
	%	\begin{equation*}
	%	\|x(t_1^n)-x(t_2^n)\|\leq M\|t_1^n-t_2^n\| \leq \varepsilon.
	%	\end{equation*}
	%	This contradicts the fact that $Y^1\neq Y^2$.
	
	Let us put $Y_0$ to be the value of $x(t)$ at $T_0$. The obtained function $Y(t)$ will be continuous from the left at $T_0$. Then, by Lemma \ref{lemma:continuous} (see Appendix),  the function $F(Y(T_0))$ is also continuous  from the left at $T_0$ and hence
	\begin{equation*}
	\lim_{t_\rightarrow T_0^-} F(x(t))=\lim_{t_\rightarrow T_0^-} F(Y(t))= F(Y_0).
	\end{equation*}
	Since for $t<T_0$ we have $\dot{x}(t)=F(x(t))$, from the last formula we get
	\begin{equation*}
	\lim_{t_\rightarrow T_0^-} \dot{x}(t)=F(Y_0).
	\end{equation*}
	However, by Lemma about extendability at point (Lemma \ref{lemma:prolongation}, see Appendix),  it follows that the function $x(t)$ can be extended from $[t_0,T_0)$ onto $[t_0,T_0]$ with preservation of continuous differentiability (let us denote the obtained function by $Y(t)$) and $\dot{Y}(t_0)=F(Y_0)$ and $Y(t)$ is a solution on $[t_0,T_0]$. We arrive to a contradiction in the subcase \ref{case:b2} of case \ref{case:b}  (solutions on $[t_0, T_0]$ do not exist).   %(solutions on $[0,T_0]$ does not exists).
\end{proof}

\section{Behaviour of trajectories at \texorpdfstring{$+\infty$}{infty}}\label{section:behaviour} %$t\rightarrow +\infty$}
In this section we prove Theorem \ref{proposition:conditions_convergence} and provide other results concerning the convergence of trajectories.

%In this section we investigate the convergence properties of  $x(\cdot)$ when $t\rightarrow+\infty$. 
%We start with the following Lemma.

\begin{proposition}\label{proposition:sequence_to_convergence}
	%Suppose that assumptions \ref{assumption:A1}, \ref{assumption:A2} and \ref{assumption:A3} hold.
	Let $x(t)$, $t\in [t_0,+\infty)$ be a solution of \eqref{dynamics:DS}. 
	Suppose that there exists an increasing sequence  $\{t_n\}_{n\in \mathbb{N}}$, $t_n\rightarrow +\infty$, such that $x(t_n)\rightarrow \bar{z}$. Then $x(t)\rightarrow \bar{z}$ as $t\rightarrow +\infty$.
\end{proposition}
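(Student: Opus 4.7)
The plan is to combine the monotonicity of $t\mapsto \|x(t)-\bar w\|$ (Proposition \ref{proposition:increasing}) with the geometric containment $\hat{\setD}\subset\setD\subset \bar{\mathbb{B}}\bigl(\tfrac{\bar w+\bar z}{2},\tfrac{\|\bar w-\bar z\|}{2}\bigr)$ that follows from \eqref{inequality:setD} and the equivalence \eqref{fact:subset_ball}. The core idea is that in this ball, $\bar w$ and $\bar z$ are antipodal boundary points, so a point in the ball is forced close to $\bar z$ whenever its distance to $\bar w$ is close to $\|\bar z-\bar w\|$. Thus, if I can upgrade the subsequential convergence $x(t_n)\to\bar z$ to the convergence of the scalar quantity $\|x(t)-\bar w\|$ along the whole trajectory, strong convergence of $x(t)$ to $\bar z$ will follow.

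Concretely, I would proceed in four short steps. First, for any $x\in \bar{\mathbb{B}}\bigl(\tfrac{\bar w+\bar z}{2},\tfrac{\|\bar w-\bar z\|}{2}\bigr)$, setting $u=x-\bar w$ and $v=\bar z-\bar w$ and expanding $\|u-v/2\|^{2}\le \|v\|^{2}/4$ yields $\|u\|^{2}\le \langle u\mid v\rangle$, and then expanding $\|x-\bar z\|^{2}=\|u-v\|^{2}$ gives the key pointwise estimate
\begin{equation*}
\|x(t)-\bar z\|^{2}\ \le\ \|\bar z-\bar w\|^{2}-\|x(t)-\bar w\|^{2}, \qquad t\ge t_{0},
\end{equation*}
since $x(t)\in\hat{\setD}$ for all $t$ by Theorem \ref{proposition_prolongation}. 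Second, by Proposition \ref{proposition:increasing} the map $t\mapsto\|x(t)-\bar w\|$ is nondecreasing, and this estimate shows it is bounded above by $\|\bar z-\bar w\|$; therefore the limit $L:=\lim_{t\to +\infty}\|x(t)-\bar w\|$ exists and $L\le \|\bar z-\bar w\|$. Third, along the given sequence $t_{n}\to +\infty$, strong convergence $x(t_n)\to\bar z$ implies $\|x(t_n)-\bar w\|\to\|\bar z-\bar w\|$, and combined with the monotone limit this forces $L=\|\bar z-\bar w\|$. Fourth, plugging $L=\|\bar z-\bar w\|$ back into the pointwise estimate gives $\|x(t)-\bar z\|^{2}\to 0$, which is the desired strong convergence.

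There is no real obstacle here: the monotonicity and uniqueness results from Section \ref{section:existenceanduniqueness} do the heavy lifting, and the rest is elementary Hilbert space geometry. The only point that requires a moment of care is the derivation of the inequality $\|x-\bar z\|^{2}\le \|\bar z-\bar w\|^{2}-\|x-\bar w\|^{2}$, but it is an immediate consequence of the ball containment and the characterization \eqref{fact:subset_ball}. Note that the argument is purely quantitative and so avoids any compactness issue, which is important because $\hilbertX$ may be infinite-dimensional and closed bounded sets need not be compact.
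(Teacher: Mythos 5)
Your proof is correct and follows essentially the same route as the paper's: both rest on the monotonicity of $t\mapsto\|x(t)-\bar w\|$ from Proposition \ref{proposition:increasing} and on the inequality $\|x-\bar z\|^{2}\le\|\bar w-\bar z\|^{2}-\|\bar w-x\|^{2}$ (which the paper isolates as Lemma \ref{inequaltiy:triangle_ball} and you re-derive from the ball containment). The only cosmetic difference is that you package the argument via the existence of the monotone limit $L=\lim_{t\to+\infty}\|x(t)-\bar w\|$ and its identification with $\|\bar z-\bar w\|$, whereas the paper runs a direct $\varepsilon$--$n_0$ estimate along an arbitrary sequence $s_n\to+\infty$.
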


\begin{proof}
	Let $\{t_n\}_{n\in \mathbb{N}}$, $t_n\rightarrow +\infty$ be such that $x(t_n)\rightarrow \bar{z}$. We will show that for all $\varepsilon>0$, for every  increasing sequence $\{s_n\}_{n\in \mathbb{N}}$, $s_n\rightarrow +\infty$ there exists $n_0\in \mathbb{N}$ such that for all $n\geq n_0$, $\|x(s_n)-\bar{z}\|\leq \varepsilon$. Take any $\varepsilon>0$ and an increasing sequence $\{s_n\}_{n\in \mathbb{N}}$, $s_n\rightarrow +\infty$.
	
	We have
	\begin{equation*}
	\frac{d}{dt}\|x(t)-\bar{w}\|^2= 2\langle F(x(t)) \mid x(t)-\bar{w}\rangle\geq 0, 
	\end{equation*}
	hence the function $\|x(\cdot)-\bar{w}\|^2$ is nondecreasing.
	Moreover, by \eqref{inequality:setD} (see also Lemma~\ref{inequaltiy:triangle_ball}) and convergence of $x(t_n)$, for all $\varepsilon^\prime>0$ there exists $n_0^\prime\in \mathbb{N}$ such that for all $n>n_0^\prime$
	\begin{equation*}
	\|x(t_n)-\bar{w}\|^2\geq \|\bar{w}-\bar{z}\|^2 -\varepsilon^\prime.
	\end{equation*}
	Take $\varepsilon^\prime=\varepsilon$ and $n_0$ such that $s_{n_0}\geq t_{n_0^\prime}$.
	
	Then, by \eqref{inequality:setD}
	%Lemma \ref{inequaltiy:triangle_ball} 
	and the fact that $\|x(\cdot)-\bar{w}\|^2$ is nondecreasing we obtain: for all $n\geq n_0$
	\begin{align*}
	\|x(s_n)-\bar{z}\|^2\leq \|\bar{w}-\bar{z}\|^2-\|x(s_n)-\bar{w}\|^2\leq \|\bar{w}-\bar{z}\|^2-\|x(t_{n_0}^\prime)-\bar{w}\|^2\leq \varepsilon.\  
	\end{align*}
\end{proof}

Now we give now the proof of Theorem~\ref{proposition:conditions_convergence}.
\begin{proof}[of Theorem~\ref{proposition:conditions_convergence}].
	By \eqref{assumption:weak2}, we have $\tilde x=\bar z$, i.e., $ x(t_{n_k})$ converges weakly to $\bar z$. By  
	\eqref{inequality:setD}
	%Lemma \ref{inequaltiy:triangle_ball}
	, the following inequality holds for this subsequence
	$$\|x(t_{n_k})-\bar w\|^2+ \|x(t_{n_k})-\bar z\|^2\leq \|\bar w-\bar z\|^2,\quad k=1,2,\ldots $$
	and hence 
	$$\liminf_{k\to\infty} \|x(t_{n_k})-\bar w\|^2 +\liminf_{k\to\infty} \|x(t_{n_k})-\bar z\|^2  \le \|\bar w-\bar z\|^2. \eqno{(*)}$$ 
	Since the norm is weakly  lower semicontinuous,  we also have 
	$$\|\bar z-\bar w\|^2\le\liminf_{k\to\infty} \|x(t_{n_k})-\bar w\|^2.   
	$$
	This and $(*)$  implies
	$$ \liminf_{k\to\infty}\|x(t_{n_k})-\bar z\|^2= 0.$$
	Consequently, there is a subsequence $t_{n_{k_m}}$ such that 
	$$\lim_{m\to\infty} \|x(t_{n_{k_m}})-\bar z\|=0.$$
	Thus we have shown that for any sequence $\{t_n\}_{n\in \mathbb{N}}$, $t_n\to\infty$, there exists a subsequence  $\{t_{n_{k_m}}\}_{m\in \mathbb{N}}$ such that the above condition holds. 
	% 	\end{enumerate}
	This means that  $\|x(t)-\bar z\|\to 0$ as $t\to +\infty$.  
\end{proof}
% \begin{proposition}
%         Suppose that assumptions \ref{assumption:A1}, \ref{assumption:A2} and \ref{assumption:A3} hold. Assume that $\hilbertX$ is finite dimensional and that for all $x\in \hat{\setD}\setminus \{\bar{z}\}$ we have $\langle F(x) \mid \bar{w}-x \rangle <0$. Then $\lim\limits_{t\rightarrow +\infty} x(t) =\bar{z}$.
% \end{proposition}

In the next two propositions we propose variants of Theorem \ref{proposition:conditions_convergence}  in which we replace assumption \eqref{assumption:weak2} by other assumptions.

In the finite-dimensional case, the assertion of Theorem \ref{proposition:conditions_convergence}  can be obtained without assuming \eqref{assumption:weak2}. Instead we need to assume a strengthened form \eqref{cond:Cprime} of the assumption \ref{assumption:A3} on vector field $F$.

Recall that the assumption \ref{assumption:A3} says that
$\langle F(x) \mid \bar{w} - x \rangle \leq 0$ for all $x\in \hat{\setD}$.
\begin{proposition}
	\label{proposition 55}
	%Suppose that assumptions \ref{assumption:A1}, \ref{assumption:A2} and \ref{assumption:A3} hold. 
	Let $\hilbertX$ be a  finite-dimensional space, let $x(t)$, $t\in [t_0,+\infty)$ be a solution of \eqref{dynamics:DS} and assume that 
	\begin{equation}\tag{C*}\label{cond:Cprime}
	\langle F(x(t)) \mid \bar{w}-x(t) \rangle <0 \quad \forall\ t\in [t_0,+\infty) \  x(t)\neq\bar{z} . 
	\end{equation}
	Then $\lim\limits_{t\rightarrow +\infty} x(t) =\bar{z}$.
\end{proposition}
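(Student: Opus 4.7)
The plan is to exploit finite-dimensionality in order to upgrade the pointwise strict inequality \eqref{cond:Cprime} into a uniform positive lower bound on $\tfrac{d}{dt}\|x(t)-\bar w\|^2$, and then to derive a contradiction with the boundedness of this quantity.

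Set $V(t):=\|x(t)-\bar w\|^2$. By Proposition \ref{proposition:increasing}, $V$ is nondecreasing along the trajectory, and since $x(t)\in\hat{\setD}$ while $\hat{\setD}\subseteq \setD\subseteq\bar{\mathbb{B}}\!\left(\tfrac{\bar w+\bar z}{2},\tfrac{\|\bar w-\bar z\|}{2}\right)$ is bounded, $V$ is also bounded above. Hence $V(t)\to V^{\star}<+\infty$ as $t\to+\infty$.

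Next I would argue by contradiction, combined with Proposition \ref{proposition:sequence_to_convergence}. If $x(t)\not\to\bar z$, then no sequence $x(t_n)$ can converge to $\bar z$ (otherwise Proposition \ref{proposition:sequence_to_convergence} would give $x(t)\to\bar z$), so $\liminf_{t\to+\infty}\|x(t)-\bar z\|\geq\varepsilon_0$ for some $\varepsilon_0>0$, i.e.\ there is $T_0\geq t_0$ with $\|x(t)-\bar z\|\geq\varepsilon_0$ for every $t\geq T_0$. The finite-dimensionality of $\hilbertX$ now enters: the set
\[
K_{\varepsilon_0}:=\hat{\setD}\cap\{x\in\hilbertX:\|x-\bar z\|\geq\varepsilon_0\}
\]
is closed and bounded, hence compact. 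Reading \eqref{cond:Cprime} as the natural pointwise strengthening of assumption \ref{assumption:A3} on $\hat{\setD}\setminus\{\bar z\}$, the continuous function $\phi(x):=-\langle F(x)\mid \bar w-x\rangle$ is strictly positive on $K_{\varepsilon_0}$, so it attains a minimum $\delta>0$ there. For every $t\geq T_0$ we then have $x(t)\in K_{\varepsilon_0}$ and
\[
\dot V(t)=2\langle F(x(t))\mid x(t)-\bar w\rangle=2\phi(x(t))\geq 2\delta,
\]
which integrated gives $V(t)\geq V(T_0)+2\delta(t-T_0)\to+\infty$, contradicting the boundedness of $V$. Therefore $x(t)\to\bar z$.

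The main obstacle is the correct reading of \eqref{cond:Cprime}: one needs the strict inequality at every point of $\hat{\setD}\setminus\{\bar z\}$, not merely along the trajectory, so that compactness of $K_{\varepsilon_0}$ delivers the uniform positive lower bound $\delta$. Without such uniformity, the convergence of $V$ only yields $\int_{t_0}^{+\infty}\phi(x(t))\,dt<+\infty$, which in general does not exclude $\phi(x(t))\to 0$ while $\|x(t)-\bar z\|$ stays bounded away from zero; the role of finite-dimensionality is precisely to rule this out via compactness.
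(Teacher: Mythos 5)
Your proof is correct, and it reaches the conclusion by a route that is recognizably different from the paper's, even though both rest on the same three ingredients: monotonicity and boundedness of $V(t)=\|x(t)-\bar{w}\|^2$, compactness supplied by finite-dimensionality, and Proposition \ref{proposition:sequence_to_convergence}. The paper argues in the forward direction: from the boundedness of $V$ it extracts a sequence $t_k\to+\infty$ along which $\dot V(t_k)\to 0$, applies compactness to the trajectory points $x(t_k)$ to get a convergent subsequence $x(t_{k_n})\to\tilde{x}$, passes to the limit to obtain $\langle F(\tilde{x})\mid \tilde{x}-\bar{w}\rangle=0$, concludes $\tilde{x}=\bar{z}$ from \eqref{cond:Cprime}, and only then invokes Proposition \ref{proposition:sequence_to_convergence} to upgrade sequential to full convergence. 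You instead argue by contradiction and apply compactness to the set $K_{\varepsilon_0}$ rather than to the trajectory: the extreme value theorem converts the strict pointwise inequality into a uniform bound $\dot V\ge 2\delta>0$, which is incompatible with the boundedness of $V$. Your version is somewhat more economical (no subsequence extraction; Proposition \ref{proposition:sequence_to_convergence} is used once, in contrapositive form, to get the trajectory eventually bounded away from $\bar{z}$), it makes the role of finite-dimensionality transparent, and it is essentially the finite-dimensional specialization of the argument the paper uses in the infinite-dimensional variants that follow Proposition \ref{proposition 55}, where a uniform bound of this kind is assumed outright. One caveat applies equally to both proofs, and you flag it correctly: \eqref{cond:Cprime} as literally written is a condition along the trajectory only, whereas both arguments need the strict inequality at points that are merely limits of trajectory points (the paper at $\tilde{x}$, you on all of $K_{\varepsilon_0}$); the intended reading is evidently the pointwise strengthening of \ref{assumption:A3} on $\hat{\setD}\setminus\{\bar{z}\}$, under which your argument is complete.
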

\begin{proof}
	%{\color{red}Wystarczy zaczac od Let..
	%Since $x(\cdot)$ is continuous and $x(\cdot) \in \hat{\setD}$, which is compact, there %exists a sequence $\{t_n\}_{n\in \mathbb{N}}$ such that $x(t_n)$ converges. }
	%{\color{red} By Fact \ref{fact:subset_ball},
	%\begin{equation*}
	%   \frac{d}{dt} \|x(t)-\bar{w}\|^2 = 2\langle F(x(t)) \mid x(t) - \bar{w} \rangle \geq 0
	%\end{equation*}
	%}
	Let $g(t):=\frac{d}{dt}\|x(t)-\bar{w}\|^{2}$, $t\ge t_{0}.$
	We start by showing that there exists a sequence $\{t_k\}$, $t_k\rightarrow +\infty$ such that $\lim\limits_{k\rightarrow +\infty} g(t_k)= 0$. 
	
	On the contrary, suppose that there exist $\varepsilon>0$ and  $t^\prime \geq t_0$ such that  $g(t)>\varepsilon$ for all $t>t^\prime$. Hence, for all $t> t^\prime$
	%By contrary suppose, that it is not true. Then there exists $t^\prime \geq t_0$ such that %for all $t>t^\prime$ we have $g(t)>\varepsilon>0$ and for all $t> t^\prime$
	\begin{align*}
	&\|x(t)-\bar{w}\|^2 - \|x(t_0)-\bar{w}\|^2=\int_{t_0}^{t} g(s) \, ds \\
	&= \int_{t^0}^{t^\prime} g(s) \, ds + \int_{t^\prime}^{t} g(s) \, ds \geq \int_{t^0}^{t^\prime} g(s) \, ds  + \int_{t^\prime}^{t} \varepsilon \, ds \\
	&=\int_{t^0}^{t^\prime} g(s) \, ds + (t-t^\prime) \varepsilon.
	\end{align*}
	By taking 
	\begin{equation*}
	t>\frac{1}{\varepsilon}\left(\|\bar{z}-\bar{w}\|^2-\|x(t_0)-\bar{w}\|^2- \int_{t^0}^{t^\prime} g(s) \, ds\right)+t^\prime
	\end{equation*}
	we arrive to $\|x(t)-\bar{w}\|^2>\|\bar{z}-\bar{w}\|$, i.e. $x(t)\notin \hat{\setD}$ - a contradiction. In this way, we proved that there exists a sequence  $\{t_k\}_{k\in \mathbb{N}}$ such that $t_k\rightarrow +\infty$ and $\lim\limits_{k\rightarrow +\infty} g(t_k)= 0$. 
	
	Since $\hilbertX$ is finite-dimensional and $\hat{\setD}$ is closed, bounded, hence compact. 
	There exists 
	%Take now 
	a subsequence of $\{t_k\}_{k\in \mathbb{N}}$, namely $\{ t_{k_n} \}_{n\in \mathbb{N}}$ such that $x(t_{k_n})$ converges 
	%(it exists since $\hat{\setD}$ is compact). Let 
	and $\lim\limits_{n\rightarrow +\infty } x(t_{k_n})=\tilde{x}\in\hat{\setD}$. Without loss of generality, we may assume that the sequence $\{t_{k_n} \}_{n\in \mathbb{N}}$ is  increasing. 
	
	By  \ref{fact:subset_ball},
	\begin{equation*}
	\frac{d}{dt} \|x(t)-\bar{w}\|^2 = 2\langle F(x(t)) \mid x(t) - \bar{w} \rangle \geq 0\quad \text{for all } t\geq t_0.
	\end{equation*}
	We have 
	\begin{equation}\label{convergence:remark_weak2strong}
	0=\lim_{n\rightarrow +\infty } g(t_{k_n}) = \lim_{n\rightarrow +\infty } 2 \langle F(x(t_{k_n})) \mid x(t_{k_n})-\bar{w} \rangle = 2 \langle F(\tilde{x}) \mid \tilde{x}-\bar{w} \rangle,
	\end{equation}
	hence, by assumption, $\tilde{x}=\bar{z}$. Now the assertion  follows from Proposition \ref{proposition:sequence_to_convergence}.   
	
	% Then there exists a sequence $\{t_k\}$ such that $t_k\to +\infty$ such that $g(t_k) >\varepsilon_{0}>0$ for all $k\in\mathbb{N}$. 
	% we have  $\|x(t)-\bar{w}\|^2 \rightarrow \alpha \leq \|\bar{z}-\bar{w}\|^2$. 
	% If it were $\alpha < \|\bar{z}-\bar{w}\|^2$, then  it would be $\|x(t_n)-\bar{w}\|^2\rightarrow \alpha$, however in this case
	% \begin{equation*}
	%   \langle F(x(t_n)) \mid \bar{w}-x(t_n) \rangle \not\longrightarrow 0, \quad \text{as}\ n\rightarrow +\infty,
	% \end{equation*}
	% since $\lim_{n\rightarrow +\infty} x(t_n) \neq \bar{z}$.
\end{proof}

\begin{remark} 
	By examining the above proof, we see that  the assertion of  Proposition  \ref{proposition 55},  remains true in {\em infinite-dimensional} Hilbert space $\hilbertX$ 
	%the assertion of Proposition  \ref{proposition 55} remains true 
	under additional assumption (to \eqref{cond:Cprime}) on $F$:
	\begin{align}\tag{W-S}
	\begin{aligned}
	&F \text{ can be extended to  } \operatorname{conv}\, \hat{\setD}  \text{ in such way that }\\
	&F:\ \operatorname{conv}\, \hat{\setD}\rightarrow \hilbertX \text{ is a weak-to-strong continuous on } \operatorname{conv}\, \hat{\setD},
	\end{aligned}
	\end{align}
	i.e., for any weakly convergent sequence  $\hat{\setD}\ni x_{n}\rightharpoonup\bar{x}$ we have $\lim\limits_{n\rightarrow+\infty} F(x_n)=F(\bar{x})$, where the limit is strong.
	
	The need of using this additional assumption  follows from the fact that if $v_{n}\rightarrow v$ and  $u_{n}\rightharpoonup u$ %and $\{u_n\}_{n\in \mathbb{N}}$ is bounded
	, then $\langle v_{n}\mid u_{n}\rangle\rightarrow \langle v\mid u\rangle$. Indeed, 
	$$
	|\langle v_{n}\mid u_{n}\rangle-\langle v\mid u\rangle|\begin{array}[t]{l}
	=|\langle v_{n}-v\mid u_{n}\rangle+\langle u_{n}\mid v\rangle-\langle v\mid u\rangle|\\
	\le\|u_{n}\|\|v_{n}-v\|+|\langle u_{n}\mid v\rangle-\langle v\mid u\rangle|.
	\end{array}
	$$
	This fact allows to show \eqref{convergence:remark_weak2strong}.
\end{remark}

% \begin{proposition}
%         Suppose that assumptions \ref{assumption:A1}, \ref{assumption:A2} and \ref{assumption:A3} hold.
%         Assume that for all $x\in \hat{\setD}\setminus \{\bar{z}\}$ we have $\langle F(x) \mid \bar{w}-x \rangle <\alpha(x)$, where $\alpha:\hat{\setD} \rightarrow \mathbb{R}_{-}$ is an integrable function on $\hat{\setD}$ such that $\alpha(\bar{z})=0$ and $\sup\limits_{x\in \setD \setminus B(\bar{z},\varepsilon)} \alpha(x)<0$ for all $\varepsilon>0$. Then $\lim\limits_{t\rightarrow +\infty} x(t) =\bar{z}$.
% \end{proposition}

The following proposition is a variant of Proposition \ref{proposition 55} valid in infinite-dimensional Hilbert space under a more restrictive form \eqref {cond:Cprimeprime} below of condition \eqref{cond:Cprime}.

\begin{proposition}
	%Suppose that assumptions \ref{assumption:A1}, \ref{assumption:A2} and \ref{assumption:A3} hold.
	Let $\hilbertX$ be an infinite-dimensional space and let $x(t)$, $t\in [t_0,+\infty)$ be a solution of \eqref{dynamics:DS}. 
	Assume that for all $t\in [t_0,+\infty)$ such that $x(t)\neq\bar{z}$, we have 
	\begin{equation}\tag{C**}\label{cond:Cprimeprime}
	\langle F(x(t)) \mid \bar{w}-x(t) \rangle  <\alpha(t),
	\end{equation} where $\alpha:[t_{0},+\infty) \rightarrow \mathbb{R}_{-}$ is an integrable function on any interval $[t_{0},T]$, $T>t_0$ 
	and  there exist $T'>t_0$, $\sqrt{T^\prime}-\frac{t_0}{\sqrt{T^\prime}}> \frac{1}{2}(\|\bar{w}-\bar{z}\|^2-\|x(t_0)-\bar{w}\|^2)$ 
	and $\varepsilon\leq \frac{-1}{\sqrt{T^\prime}}$
	%and $\varepsilon(T')< -\frac{1}{\sqrt{T'}}<\frac{\|\bar{w}-\bar{z}\|^2-\|x(t_0)-\bar{w}\|^2}{-2(T'-t_0)} <0$
	such that  $\sup\limits_{[t_{0},T'] } \alpha(s)<\varepsilon
	$.
	Then $\lim\limits_{t\rightarrow +\infty} x(t) =\bar{z}$.
\end{proposition}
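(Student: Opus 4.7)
The plan is to show that the trajectory must actually reach $\bar{z}$ at some finite time $t^{*}\in[t_{0},T^{\prime}]$; since $\bar{z}$ is an equilibrium of $F$, an argument from the proof of Proposition~\ref{proposition:uniqueness} then forces $x(t)\equiv\bar{z}$ thereafter, giving the claim. This is sharper than Proposition~\ref{proposition 55} and avoids any compactness or weak--to--strong continuity assumption, because hypothesis \eqref{cond:Cprimeprime} delivers a quantitative strictly positive lower bound on $g(t):=\frac{d}{dt}\|x(t)-\bar{w}\|^{2}=2\langle F(x(t))\mid x(t)-\bar{w}\rangle$.

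The core step is a finite-time contradiction. On any subinterval of $[t_{0},T^{\prime}]$ on which $x(t)\neq\bar{z}$, \eqref{cond:Cprimeprime} gives $g(t)>-2\alpha(t)\geq 0$; at any instant where $x(t)=\bar{z}$, Assumption \ref{assumption:A1} forces $F(\bar{z})=0$, hence $g(t)=0$. If $x(t)\neq\bar{z}$ were to hold throughout $[t_{0},T^{\prime}]$, then integrating this inequality over $[t_{0},T^{\prime}]$ and using $\alpha(s)<\varepsilon\le -1/\sqrt{T^{\prime}}$ would give
\begin{align*}
\|x(T^{\prime})-\bar{w}\|^{2}-\|x(t_{0})-\bar{w}\|^{2} &> -2\int_{t_{0}}^{T^{\prime}}\alpha(s)\,ds \;\ge\; -2\varepsilon (T^{\prime}-t_{0})\;\ge\; \frac{2(T^{\prime}-t_{0})}{\sqrt{T^{\prime}}}\\
&= 2\Bigl(\sqrt{T^{\prime}}-\tfrac{t_{0}}{\sqrt{T^{\prime}}}\Bigr)\;>\; \|\bar{w}-\bar{z}\|^{2}-\|x(t_{0})-\bar{w}\|^{2},
\end{align*}
where the last inequality is precisely the hypothesis linking $T^{\prime}$, $t_{0}$, $\bar{w}$ and $\bar{z}$. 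This yields $\|x(T^{\prime})-\bar{w}\|>\|\bar{w}-\bar{z}\|$, which by \eqref{fact:subset_ball} contradicts $x(T^{\prime})\in\hat{\setD}\subset\bar{\mathbb{B}}((\bar{w}+\bar{z})/2,\|\bar{w}-\bar{z}\|/2)$. Hence there must exist $t^{*}\in[t_{0},T^{\prime}]$ with $x(t^{*})=\bar{z}$.

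To finish, I would recycle Case~1 of the proof of Proposition~\ref{proposition:uniqueness}: by Proposition~\ref{proposition:increasing} the map $t\mapsto\|x(t)-\bar{w}\|$ is nondecreasing, so $\|x(t)-\bar{w}\|\ge\|\bar{z}-\bar{w}\|$ for every $t\ge t^{*}$, and the geometric identity $\{x\in\hilbertX:\|x-\bar{w}\|\ge\|\bar{w}-\bar{z}\|\}\cap\hat{\setD}=\{\bar{z}\}$ forces $x(t)=\bar{z}$ for every such $t$. In particular $\lim_{t\to+\infty}x(t)=\bar{z}$ in the norm of $\hilbertX$. The only genuinely delicate point is unpacking the numerical constraint so that the integrated bound on $-2\int_{t_{0}}^{T^{\prime}}\alpha(s)\,ds$ really does exceed $\|\bar{w}-\bar{z}\|^{2}-\|x(t_{0})-\bar{w}\|^{2}$; once that chain of inequalities is in place, the rest of the argument is purely geometric and reuses results already established in Sections \ref{section:existenceanduniqueness}--\ref{section:behaviour}.
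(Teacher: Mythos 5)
Your argument is correct and follows essentially the same route as the paper's: both hinge on integrating $\frac{d}{dt}\|x(t)-\bar w\|^2=2\langle F(x(t))\mid x(t)-\bar w\rangle> -2\alpha(t)$ over $[t_0,T']$ and playing the resulting lower bound $2\bigl(\sqrt{T'}-t_0/\sqrt{T'}\bigr)$ off against the constraint $\|x(T')-\bar w\|\le\|\bar w-\bar z\|$ coming from $\hat{\setD}\subset\bar{\mathbb{B}}\bigl(\tfrac{\bar w+\bar z}{2},\tfrac{\|\bar w-\bar z\|}{2}\bigr)$, together with the observation that once the trajectory reaches $\bar z$ it stays there. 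Your phrasing as finite-time arrival at $\bar z$ by $t^*\le T'$, settled via Case~1 of Proposition~\ref{proposition:uniqueness}, is in fact a slightly cleaner and sharper packaging of the same computation (the paper's detour through Proposition~\ref{proposition:sequence_to_convergence} is not actually needed for the contradiction).
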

\begin{proof}
	Let us note that in the case when there exists $t^\prime\in [t_0,+\infty)$ such that $x(t^\prime)=\bar{z}$, then $x(t)=\bar{z}$ for all $t>t^\prime$ since $F(x(t^\prime))=F(\bar{z})=0$.
	
	Consider now the situation that $x(t)\neq \bar{z}$ for any $t\in [t_0,+\infty)$. 
	By contradiction, suppose that $x(t)\not\rightarrow \bar{z}$. Then, in view of Proposition \ref{proposition:sequence_to_convergence}, there exists $\varepsilon>0$ such that $x(t)\notin B(\bar{z},\varepsilon)$ for all $t\in [t_0,+\infty)$. 
	%Let $t<T'$ and $c(t)=\sup\limits_{s\in [t_{0},t]} \alpha(s)< \frac{\|\bar{w}-\bar{z}\|^2-\|x(t_0)-\bar{w}\|^2}{-2(t-t_0)}  $.
	
	We have that for all $t>T'$
	\begin{align*}
	&\|x(t)-\bar{w}\|^2-\|x(t_0)-\bar{w}\|^2 = \int_{t_0}^{t} \frac{d}{ds} \|x(s)-\bar{w}\|^2\, ds \\
	&= 2\int_{t_0}^{t} \langle F(x(s)) \mid x(s)-\bar{w}\rangle  \, ds \geq -2\int_{t_0}^{t} \alpha(s)\, ds \\
	&\geq -2(t-t_0)\cdot  \sup_{s\in [t_0,t]} \alpha(s)\\
	%&\geq -2(t-t_0)\cdot  \sup_{x\in \setD \setminus B(\bar{z},\varepsilon)} a(x) 
	&\geq -2(t-t_0)\cdot \varepsilon.
	\end{align*}
	Thereby for such $t> \frac{\|\bar{w}-\bar{z}\|^2}{2c}+t_0 $ we arrive to a contradiction with $x(t)\in \hat{\setD}\subset \setD$.  
\end{proof}

%{\color{blue}

\begin{proposition}  %Suppose that assumptions \ref{assumption:A1}, \ref{assumption:A2} and \ref{assumption:A3} hold. 
	Let $\hilbertX$ be an infinite-dimensional space and let $x(t)$, $t\in [t_0,+\infty)$ be a solution of \eqref{dynamics:DS}. Assume that for all $\varepsilon$ such that $0<\varepsilon<\|x_0-\bar{z}\|$ we have 
	$\inf\limits_{x \in \hat{\setD} \setminus B(\bar{z},\varepsilon)}\langle F(x) \mid \bar{w}-x \rangle  <0$. Then $\lim\limits_{t\rightarrow +\infty} x(t) =\bar{z}$.
\end{proposition}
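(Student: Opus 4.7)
The plan is to argue by contradiction, in the same spirit as the preceding propositions, using the monotonicity of $t\mapsto \|x(t)-\bar w\|^2$ together with the boundedness of $\hat{\setD}$ to derive a contradiction once the trajectory is forced to stay away from $\bar z$. The engine of the argument is Proposition~\ref{proposition:sequence_to_convergence}, which reduces the norm convergence $x(t)\to\bar z$ to the existence of some sequence $x(t_n)\to\bar z$ along $t_n\to +\infty$.

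First I would negate the conclusion: suppose $x(t)\not\to\bar z$. Then I would apply the contrapositive of Proposition~\ref{proposition:sequence_to_convergence} in the following strong form: if for every $\varepsilon>0$ and every $T\ge t_0$ there were some $t\ge T$ with $\|x(t)-\bar z\|<\varepsilon$, then diagonalizing would produce $t_n\to+\infty$ with $x(t_n)\to \bar z$, whence $x(t)\to\bar z$ by Proposition~\ref{proposition:sequence_to_convergence}. Hence there exist $\varepsilon\in (0,\|x_0-\bar z\|)$ and $T^*\ge t_0$ such that $x(t)\in \hat{\setD}\setminus B(\bar z,\varepsilon)$ for all $t\ge T^*$.

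Next I would use the hypothesis (read, in conjunction with \ref{assumption:A3} which already forces $\langle F(x)\mid\bar w-x\rangle\le 0$, as providing a uniform strictly negative upper bound): there is $c=c_\varepsilon>0$ with $\langle F(x)\mid \bar w-x\rangle\le -c$ for all $x\in \hat{\setD}\setminus B(\bar z,\varepsilon)$. Plugging in the trajectory and using $\dot x(t)=F(x(t))$, for every $t\ge T^*$,
\begin{equation*}
\frac{d}{dt}\|x(t)-\bar w\|^2 = 2\langle F(x(t))\mid x(t)-\bar w\rangle \ge 2c.
\end{equation*}
Integrating from $T^*$ to $t$ yields
\begin{equation*}
\|x(t)-\bar w\|^2 \ge \|x(T^*)-\bar w\|^2 + 2c(t-T^*)\xrightarrow[t\to +\infty]{}+\infty.
\end{equation*}
This contradicts $\hat{\setD}\subset \setD\subset \bar{\mathbb B}\bigl((\bar w+\bar z)/2,\|\bar w-\bar z\|/2\bigr)$, which forces $\|x(t)-\bar w\|\le \|\bar w-\bar z\|$ for every $t$. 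The contradiction shows $x(t)\to \bar z$.

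The main obstacle, and the only delicate point, is the reduction from $x(t)\not\to\bar z$ to the assertion that the trajectory eventually stays outside $B(\bar z,\varepsilon)$; this requires Proposition~\ref{proposition:sequence_to_convergence} applied via its contrapositive, not merely the direct negation of convergence. Once that is in hand, extracting the constant $c>0$ from the hypothesis and running the Gr\"onwall-style growth estimate against the a priori bound $\|x(t)-\bar w\|\le \|\bar w-\bar z\|$ is routine.
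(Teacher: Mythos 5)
Your proof is correct and reaches the conclusion, but it is organized differently from the paper's own argument for this proposition. The paper proceeds directly: from the integral identity $\|x(t)-\bar w\|^2-\|x(t_0)-\bar w\|^2=2\int_{t_0}^t\langle F(x(s))\mid x(s)-\bar w\rangle\,ds$ and the boundedness of $\hat{\setD}$ it first extracts an increasing sequence $t_n\to+\infty$ along which $\langle F(x(t_n))\mid \bar w-x(t_n)\rangle\to 0$, then uses the hypothesis to force $x(t_n)\to\bar z$ (arguing by contradiction only at the level of this sequence), and finally invokes Proposition~\ref{proposition:sequence_to_convergence}. You instead negate the conclusion at the outset, use Proposition~\ref{proposition:sequence_to_convergence} in contrapositive form to confine the trajectory to $\hat{\setD}\setminus B(\bar z,\varepsilon)$ for all large $t$, and then derive linear growth of $\|x(t)-\bar w\|^2$, contradicting $\hat{\setD}\subset\bar{\mathbb B}\bigl(\tfrac{\bar w+\bar z}{2},\tfrac{\|\bar w-\bar z\|}{2}\bigr)$. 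This is in fact the same scheme the paper uses for the neighbouring proposition under condition \eqref{cond:Cprimeprime}, transplanted here; it buys you a shorter argument that avoids the somewhat delicate intermediate step of showing that the points where $\langle F(x(s))\mid \bar w-x(s)\rangle$ is close to zero must escape to $+\infty$. The only caveat, which you correctly flag, is that both your proof and the paper's require reading the hypothesis as a \emph{uniform} strictly negative bound on $\langle F(x)\mid\bar w-x\rangle$ over $\hat{\setD}\setminus B(\bar z,\varepsilon)$ (i.e.\ a supremum condition, $\sup_{x\in\hat{\setD}\setminus B(\bar z,\varepsilon)}\langle F(x)\mid\bar w-x\rangle<0$), whereas the statement literally asserts only that the infimum is negative, which together with assumption \ref{assumption:A3} carries no information; since the paper's own proof makes exactly the same use of the hypothesis, this is a defect of the statement rather than of your argument.
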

\begin{proof}
	If there exists $t'\in [0,+\infty)$ such that $\langle F(x(t')) \mid w-x(t')\rangle =0$ then we are done - in view of assumptions of the Proposition, $x(t^\prime)=\bar{z}$, and by 
	\eqref{inequality:setD}
	%Lemma \ref{inequaltiy:triangle_ball}
	, Proposition \ref{proposition:increasing}, $x(t)=\bar{z}$ for all $t\geq t'$.
	
	Suppose that for all $t\in [0,+\infty)$ we have $\langle F(x(t)) \mid w-x(t)\rangle <0$.	For any $t>t_0$
	\begin{align*}
	&\|\bar{z}-\bar{w}\|^2-\|x(t_0)-\bar{w}\|^2\geq \|x(t)-\bar{w}\|^2-\|x(t_0)-\bar{w}\|^2 = \int_{t_0}^t \frac{d}{ds} \|x(s)-\bar{w}\|^2\, ds \\
	&= 2\int_{t_0}^t \langle F(x(s)) \mid x(s)-\bar{w}\rangle  \, ds \geq -2(t-t_0)\cdot  \sup_{s\in [t_0,T]} \langle F(x(s)) \mid x(s)-\bar{w}\rangle \\
	&=2(t-t_0)\cdot  \inf_{s\in [t_0,T]} \langle F(x(s)) \mid w-x(s)\rangle
	%&\geq -2(t-t_0)\cdot  \sup_{x\in \setD \setminus B(\bar{z},\varepsilon)} a(x) 
	\end{align*}
	Therefore $\inf\limits_{s\in [t_0,t]} \langle F(x(s)) \mid w-x(s)\rangle \rightarrow 0$ as $t\rightarrow +\infty$. Note that $\alpha(s):=\langle F(x(s)) \mid w-x(s)\rangle$ is a continuous function on every $[t_0,t]$, $t>t_0$. 
	Hence, there exists an increasing sequence $\{t_n\}_{n\in \mathbb{R}_+}$, $t_n\rightarrow +\infty$ such that $ \langle F(x(t_n)) \mid x(t_n)-\bar{w}\rangle \rightarrow 0$. We claim that $x(t_n)\rightarrow \bar{z}$.
	
	Suppose on the contrary, that $x(t_n)\not \rightarrow \bar{z}$ as $n\rightarrow +\infty$. Then there exists $\varepsilon>0$ and a subsequence $\{t_{n_k}\}_{k\in \mathbb{N}}$ such that $x(t_{n_k})\in \hat{\setD} \setminus B(\bar{z},\varepsilon)$. Since $\inf\limits_{x \in \hat{\setD} \setminus B(\bar{z},\varepsilon)}\langle F(x) \mid \bar{w}-x \rangle  <0$ we have that there exists $c<0$ such that $\langle F(x(t_{k_n})) \mid \bar{w}-x(t_{k_n}) \rangle  <c$, a contradiction to $\lim\limits _{k\rightarrow +\infty}\langle F(x(t_{k_n})) \mid \bar{w}-x(t_{k_n}) \rangle=0$ .	
	
	%	Since $x(t_n)\in \hat{\setD}$ and $\inf\limits_{x \in \hat{\setD} \setminus B(\bar{z},\varepsilon)}\langle F(x) \mid \bar{w}-x \rangle  <0$ for any $\varepsilon$ such that $0<\varepsilon<\|x_0-\bar{z}\|$, we obtain that $x(t_n)\rightarrow \bar{z}$.
	Hence $x(t_n)\rightarrow \bar{z}$. Now the assertion  follows from Proposition \ref{proposition:sequence_to_convergence}.   
\end{proof}

%}
\section{Projective dynamical system}\label{section:PDS}

In this section, we give an example of the system \eqref{dynamics:DS}. Let $\bar{w}, \bar{z} \in \hilbertX$. We consider the projective dynamical system 
\begin{align}\tag{PDS}\label{system:PDS}
\begin{aligned}
&\dot{x}(t)=P_{\multifC(x(t))}(\bar{w})-x(t),\\
&x(t_0)=x_0\in \hat{\setD},\ t_0\geq 0,
\end{aligned}
\end{align}
where $\multifC:\ \hatD \rightrightarrows \hilbertX$ is a multifunction such that:
\begin{enumerate}[label=(\Alph*$^\prime$)]
	\item\label{assumption:A1:proj} for all $x\in \hatD$, $\bar{z}\in \multifC(x)$ and  $P_{\multifC(x)}(\bar{w})=x$ iff $x=\bar{z}$, 
	\item\label{assumption:A2:proj} for all $x\in \hat{\setD}$ we have $P_{\multifC(x)}(\bar{w})\in \setD$,
	\setcounter{enumi}{2}
	\item \label{assumption:A3:proj} $\langle P_{\multifC(x)}(\bar{w})-x \mid \bar{w} - x \rangle \leq 0$  for all $x\in \hatD$,
	\item\label{assumption:A4:proj} for all $x\in \hatD$, $\multifC(x)$ is closed and convex.
\end{enumerate}

Condition \ref{assumption:A4:proj} ensures that the projection onto $\multifC(x)$, $x\in \hat{\setD}$ is uniquely defined. 

The condition  $\langle P_{\multifC(x)}(\bar{w})-x \mid \bar{w} - x \rangle \leq 0$  for all $x\in \hatD$ is equivalent to the condition that $P_{\multifC(x)}(\bar{w}) \in H(\bar{w},x)$ for any $x\in \hatD$. This implies that for any $x\in \hatD$ and for any $h\in \multifC(x)$ we have $\langle h-x \mid \bar{w} - x \rangle \leq 0$. The later implies  $P_{\multifC(x)}(\bar{w}) \in H(\bar{w},x)$. Therefore,  \ref{assumption:A3:proj} is equivalent to the condition:
\begin{equation*}
\forall x\in \hatD\ \forall h\in \multifC(x), \quad \langle h-x \mid \bar{w}-x \rangle \leq 0. 
\end{equation*}

% 	\begin{equation*}
% 	    x+h(P_{\multifC(x)}(\bar{w})-x)=(1-h)x + h P_{\multifC(x)}(\bar{w}) \in \hat{\setD}.
% 	\end{equation*}

\begin{remark}
	Let us comment on the conditions \ref{assumption:A1:proj},  \ref{assumption:A2:proj}, \ref{assumption:A3:proj}. The condition \ref{assumption:A1:proj} is equivalent to saying that $\bar{z}$ is the only stationary point of the vector field $F(x)=P_{\multifC(x)}(\bar{w})-x$ inside the considered set $\hatD$. 
	The condition \ref{assumption:A2:proj} together with the convexity of set $\setD$ ensures that for any $\lambda\in [0,1]$, and for any $x\in \hatD\subset \setD$ it is $(1-\lambda) x + \lambda P_{\multifC(x)}(\bar{w}) \in \setD$. 
	The condition \ref{assumption:A3:proj} ensures that $P_{\multifC(x)}\in H(\bar{w},x)$ and the  function $t\mapsto \|x(t)-\bar{w}\|$ is nondecreasing (see e.g., Proposition \ref{proposition:increasing}), where $x(t)$ is a solution of \eqref{system:PDS} (whenever it exists). 
	%     Moreover, \ref{assumption:A2:proj} and \ref{assumption:A3:proj} implies that for any $\lambda\in [0,1]$, and for any $x\in \hatD\subset \setD$ it is $(1-\lambda) x + \lambda P_{\multifC(x)}(\bar{w}) \in \hatD$. Indeed, we have
	%     \begin{align*}
	% % &\langle x+h(P_{\multifC(x)}(\bar{w})-x) - x \mid \bar{w}-x \rangle
	% %	    = h \langle P_{\multifC(x)}(\bar{w})-x \mid \bar{w}-x \rangle\leq 0,\\
	% & x+\lambda(P_{\multifC(x)}(\bar{w})-x)=(1-\lambda)x + \lambda P_{\multifC(x)}(\bar{w}) \in \setD,\\
	% & \|x+\lambda(P_{\multifC(x)}(\bar{w})-x)-\bar{w}\|^2=\|x-\bar{w}\|^2 \\
	% & - 2\lambda \langle P_{\multifC(x)}(\bar{w})-x \mid \bar{w}-x \rangle + \lambda^2 \|P_{\multifC(x)}(\bar{w})-x\|^2\geq \|x-\bar{w}\|^2\geq r ,
	% \end{align*}
	% i.e. $x+\lambda(P_{\multifC(x)}(\bar{w})-x)\in \hat{\setD}$.
\end{remark}

As  consequences of Theorem \ref{proposition_prolongation} and Theorem \ref{proposition:conditions_convergence} we can formulate the following theorems.
\begin{theorem}\label{theorem:PDSexistence}
	Suppose that \ref{assumption:A1:proj}, \ref{assumption:A2:proj}, \ref{assumption:A3:proj}, \ref{assumption:A4:proj} holds. Assume that $x\mapsto P_{\multifC(x)}(\bar{w})$ is  locally Lipschitz continuous on $\hat{\setD}\setminus \{\bar{z}\}$ and continuous on $\hatD$. Then the system \eqref{system:PDS} has a unique solution on $[t_0,+\infty)$.
\end{theorem}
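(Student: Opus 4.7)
The plan is to deduce the result from Theorem~\ref{proposition_prolongation} applied to the vector field $F(x):=P_{\multifC(x)}(\bar{w})-x$. Thus it suffices to verify, for this specific $F$, the hypotheses of Theorem~\ref{proposition_prolongation}: continuity on $\hat{\setD}$, local Lipschitzness on $\hat{\setD}\setminus\{\bar{z}\}$, boundedness on $\hat{\setD}$, together with \ref{assumption:A1}, \ref{assumption:A2} and \ref{assumption:A3}. Assumption \ref{assumption:A4:proj} guarantees that $F$ is well-defined (single-valued projection), and \ref{assumption:A1}, \ref{assumption:A3} are immediate translations of \ref{assumption:A1:proj} and \ref{assumption:A3:proj}. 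Continuity of $F$ on $\hat{\setD}$ and its local Lipschitzness on $\hat{\setD}\setminus\{\bar{z}\}$ are granted by the hypothesis. Boundedness follows because $\hat{\setD}\subset\setD\subset\bar{\mathbb{B}}\bigl(\tfrac{\bar{w}+\bar{z}}{2},\tfrac{\|\bar{w}-\bar{z}\|}{2}\bigr)$ and, by \ref{assumption:A2:proj}, $P_{\multifC(x)}(\bar{w})\in\setD$ as well, so $\|F(x)\|\leq \|\bar{w}-\bar{z}\|$ for every $x\in\hat{\setD}$.

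The only step that is not entirely routine is \ref{assumption:A2}, namely that $y(h):=x+hF(x)=(1-h)x+hP_{\multifC(x)}(\bar{w})\in\hat{\setD}$ for all $x\in\hat{\setD}$ and $h\in[0,1]$. Membership in $\setD$ is clear: $x\in\hat{\setD}\subset\setD$, $P_{\multifC(x)}(\bar{w})\in\setD$ by \ref{assumption:A2:proj}, and $\setD$ is convex. To obtain the annular condition $\|y(h)-\bar{w}\|^{2}\geq r$ from~\eqref{set:Dhat}, one expands
\[
\|y(h)-\bar{w}\|^{2} = \|x-\bar{w}\|^{2} - 2h\,\langle P_{\multifC(x)}(\bar{w})-x \mid \bar{w}-x\rangle + h^{2}\,\|P_{\multifC(x)}(\bar{w})-x\|^{2}.
\]
By \ref{assumption:A3:proj} the inner-product term is nonpositive, so $-2h$ times it is nonnegative; the squared term is obviously nonnegative. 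Consequently $\|y(h)-\bar{w}\|^{2}\geq\|x-\bar{w}\|^{2}\geq r$, which places $y(h)$ in $\hat{\setD}$.

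All hypotheses of Theorem~\ref{proposition_prolongation} being verified, the existence and uniqueness of a solution on $[t_0,+\infty)$ follow at once. The principal obstacle in the argument is the forward-invariance of the annular set $\hat{\setD}$ under the chord from $x$ to $P_{\multifC(x)}(\bar{w})$; it is precisely the geometric condition \ref{assumption:A3:proj} (ensuring the move is directed away from $\bar{w}$) that makes this invariance work, and this is why \ref{assumption:A3:proj} has been imposed on $\multifC$ in addition to \ref{assumption:A1:proj}, \ref{assumption:A2:proj}, \ref{assumption:A4:proj}.
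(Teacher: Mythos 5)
Your proposal is correct and follows essentially the same route as the paper: reduce to Theorem~\ref{proposition_prolongation} by checking \ref{assumption:A1}--\ref{assumption:A3} and boundedness for $F(x)=P_{\multifC(x)}(\bar{w})-x$, with the key step being the same norm expansion showing $\|x+hF(x)-\bar{w}\|^2\geq\|x-\bar{w}\|^2\geq r$ via \ref{assumption:A3:proj}. The only cosmetic difference is your boundedness estimate $\|F(x)\|\leq\|\bar{w}-\bar{z}\|$ (using that both points lie in $\setD$, whose diameter is $\|\bar{w}-\bar{z}\|$) versus the paper's cruder $2\sup_{x\in\hatD}\|x\|$; both suffice.
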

\begin{proof}
	First, let us show that \ref{assumption:A1}, \ref{assumption:A2}, \ref{assumption:A3} hold. \ref{assumption:A1:proj} implies that $\bar{z}$ is the only stationary point of \eqref{system:PDS}, hence \ref{assumption:A1} holds.
	
	Recall that $\setD$ is a closed, convex subset of $\mathbb{B}(\frac{\bar{w}+\bar{z}}{2},\frac{\|\bar{w}-\bar{z}\|}{2})$ and  $\hatD$ is given as in \eqref{set:Dhat}. 
	By \ref{assumption:A4:proj}, the projection $P_{\multifC(x)}(\bar{w})$ is well defined for all $x\in \hatD$. By \ref{assumption:A2:proj} and  \ref{assumption:A3:proj},  assumption \ref{assumption:A2} is satisfied since for all $x\in \hat{\setD}\subset \setD$ and for any $h\in [0,1]$
	\begin{align*}
	% &\langle x+h(P_{\multifC(x)}(\bar{w})-x) - x \mid \bar{w}-x \rangle
	%	    = h \langle P_{\multifC(x)}(\bar{w})-x \mid \bar{w}-x \rangle\leq 0,\\
	& x+h(P_{\multifC(x)}(\bar{w})-x)=(1-h)x + h P_{\multifC(x)}(\bar{w}) \in \setD,\\
	& \|x+h(P_{\multifC(x)}(\bar{w})-x)-\bar{w}\|^2=\|x-\bar{w}\|^2 \\
	& - 2h \langle P_{\multifC(x)}(\bar{w})-x \mid \bar{w}-x \rangle + h^2 \|P_{\multifC(x)}(\bar{w})-x\|^2\geq \|x-\bar{w}\|^2\geq r ,
	\end{align*}
	i.e. $x+h(P_{\multifC(x)}(\bar{w})-x)\in \hat{\setD}$. Note that by taking $h=1$ we obtain that $P_{\multifC(x)}(\bar{w})\in \hat{\setD}$ for any $x\in \hatD$. 
	Assumption \ref{assumption:A3:proj} is equivalent to \ref{assumption:A3} for $F(x)=P_{\multifC(x)}(\bar{w})-x$. Observe that the mapping $F(x)=P_{\multifC(x)}(\bar{w})-x$ is bounded on $\hatD$. Indeed for any $x\in \hatD$ we have
	\begin{equation*}
	\|P_{\multifC(x)}(\bar{w})-x\|\leq \|P_{\multifC(x)}(\bar{w})\| + \|x\|\leq 2R,
	\end{equation*}
	where $R=\sup_{x\in \hatD} \|x\|$. Now, system \eqref{system:PDS} is of the form of \eqref{dynamics:DS} with $F(x)=P_{\multifC(x)}(\bar{w})-x$ and all the assumptions of Theorem \ref{proposition_prolongation} are satisfied.	The assertion of the theorem follows from Theorem \ref{proposition_prolongation}.  
\end{proof}

\begin{theorem}
	Suppose that \ref{assumption:A1:proj}, \ref{assumption:A2:proj}, \ref{assumption:A3:proj}, \ref{assumption:A4:proj} holds. Assume that $x\mapsto P_{\multifC(x)}(\bar{w})$ is  locally Lipschitz continuous on $\hat{\setD}\setminus \{\bar{z}\}$ and continuous on $\hatD$. Let $x(t)$ be a solution of \eqref{system:PDS}.  Assume that	for every increasing sequence $\{t_n\}_{n\in\mathbb{N}}$, $t_n\rightarrow +\infty$
	\begin{equation}\label{assumption:weakPDS}
	x(t_{n})\rightharpoonup \tilde{x} \implies \tilde{x}=\bar{z},
	\end{equation}
	Then $x(t)\rightarrow \bar{z}$ as $t\rightarrow +\infty$.
\end{theorem}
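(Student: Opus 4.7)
The plan is to reduce the statement directly to Theorem~\ref{proposition:conditions_convergence}. Following the same rewriting used in the proof of Theorem~\ref{theorem:PDSexistence}, I would set $F(x):=P_{\multifC(x)}(\bar{w})-x$, so that \eqref{system:PDS} takes the autonomous form
$$
\dot{x}(t)=F(x(t)),\quad x(t_0)=x_0,
$$
i.e.\ the Cauchy problem \eqref{dynamics:DS} studied in the general framework.

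Next I would import, verbatim, the verification carried out inside the proof of Theorem~\ref{theorem:PDSexistence}: assumption \ref{assumption:A4:proj} makes $F$ well-defined on $\hatD$; \ref{assumption:A1:proj} yields \ref{assumption:A1}; \ref{assumption:A2:proj} together with \ref{assumption:A3:proj} yields \ref{assumption:A2} (using the elementary identity $\|x+h(P_{\multifC(x)}(\bar w)-x)-\bar w\|^2\geq\|x-\bar w\|^2$ following from $\langle P_{\multifC(x)}(\bar{w})-x\mid\bar{w}-x\rangle\le 0$); and \ref{assumption:A3:proj} is \ref{assumption:A3} written out. The local Lipschitzness and continuity assumptions imposed on $x\mapsto P_{\multifC(x)}(\bar{w})$ carry over to $F$, and $F$ is bounded on $\hatD$ because $\|P_{\multifC(x)}(\bar{w})-x\|\leq 2\sup_{y\in\hatD}\|y\|<+\infty$. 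Therefore all hypotheses of the framework \eqref{dynamics:DS} are in force; in particular Theorem~\ref{theorem:PDSexistence} ensures that the function $x(t)$ from the statement is the unique solution of this framework on $[t_0,+\infty)$.

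Finally, observe that the hypothesis \eqref{assumption:weakPDS} coincides verbatim with the hypothesis \eqref{assumption:weak2} of Theorem~\ref{proposition:conditions_convergence}. Applying that theorem to the unique solution $x(t)$ of \eqref{dynamics:DS} identified above gives $\lim_{t\to +\infty}\|x(t)-\bar z\|=0$ in $\hilbertX$, which is precisely the desired conclusion. I do not foresee a genuine obstacle here: the whole argument is an exercise in bookkeeping, and the only point requiring minor attention is confirming that the solution referred to in the statement is the same object to which Theorem~\ref{proposition:conditions_convergence} applies, which is immediate from the uniqueness clause of Theorem~\ref{theorem:PDSexistence}.
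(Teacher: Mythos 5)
Your proposal is correct and follows essentially the same route as the paper: the paper's proof likewise invokes the verification of assumptions \ref{assumption:A1}--\ref{assumption:A3} carried out in the proof of Theorem~\ref{theorem:PDSexistence} and then applies Theorem~\ref{proposition:conditions_convergence} to the resulting instance of \eqref{dynamics:DS}. Your added bookkeeping (boundedness of $F$, identification of the unique solution) is consistent with, and slightly more explicit than, the paper's two-line argument.
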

\begin{proof}
	By the proof of Theorem \ref{theorem:PDSexistence},  \eqref{system:PDS}, assumptions \ref{assumption:A1}, \ref{assumption:A2} and \ref{assumption:A3} are satisfied, and by assumption $F(x)=P_{\multifC(x)}(\bar{w})$ is locally Lipschitz continuous. Now the assertion follows from Theorem  \ref{proposition:conditions_convergence}.
\end{proof}

To investigate the local Lipschitzness of $x \mapsto P_{\multifC(x)}(\bar{w})$   on $\hat{\setD}\setminus\{\bar{z}\}$ (and the continuity of  $x\mapsto P_{\multifC(x)}(\bar{w})$   on $\hat{\setD}$) one should take into account the  form of  multifunction $\multifC$. 	Behaviour of the projection of a given $\bar{w}$ onto polyhedral multifunction $\multifC$ given by a finite number of linear inequalities and equalities were investigated in e.g.  \cite[Corollary 2]{Bednarczuk2020}, see also \cite[Theorem 6.5]{full_stability_of_general_parametric_variational_systems}.

\begin{proposition}\label{proposition:opertPDS}
	Let $\opert:\ \hilbertX \rightarrow \hilbertX$, which appears in  system \eqref{system:Haugazeau},  %Assume that  $\opert:\ \hilbertX \rightarrow \hilbertX$ is a 
	be a firmly quasinonexpansive operator, i.e.,
	\begin{equation*}
	\forall x \in \hilbertX \ \forall y \in \operatorname{Fix}\, \opert ,\quad \|\opert x-y \|^2 + \|\opert x - x \|^2 \leq \|x-y\|^2.
	\end{equation*}
	Assume that $\bar{w}\in \hilbertX$, $\bar{w}\notin \operatorname{Fix}\, \opert$ and $\bar{z}=P_{\operatorname{Fix}\, \opert}(\bar{w})$,  and let $\setD=\mathbb{B}(\frac{\bar{w}+\bar{z}}{2}, \frac{\|\bar{w}-\bar{z}\|}{2})$, $\hatD=\{ x \in \setD \mid \|x-\bar{w}\|^2\geq r  \}$ for some $r\in (0,\|\bar{w}-\bar{z}\|^2)$.
	Then the assumptions \ref{assumption:A1:proj}, \ref{assumption:A2:proj},  \ref{assumption:A3:proj}, \ref{assumption:A4:proj} holds for the system \eqref{system:PDS} with $\multifC:\ \setD \rightarrow \hilbertX$ defined as  $\multifC(x):=H(\bar{w},x)\cap H(x,\opert x)$.
\end{proposition}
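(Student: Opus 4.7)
The plan is to verify the four conditions \ref{assumption:A1:proj}--\ref{assumption:A4:proj} in turn, exploiting the structure of $\multifC(x)$ as an intersection of two closed half-spaces together with the firmly quasi-nonexpansive property of $\opert$.

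Condition \ref{assumption:A4:proj} is immediate: each of $H(\bar w,x)$ and $H(x,\opert x)$ is closed and convex (a closed half-space, or the whole $\hilbertX$ in degenerate cases), so $\multifC(x)$ is closed and convex. Condition \ref{assumption:A3:proj} then also comes for free, since $\multifC(x)\subseteq H(\bar w,x)$ forces $P_{\multifC(x)}(\bar w)\in H(\bar w,x)$, which by the definition of $H(\bar w,x)$ is exactly $\langle P_{\multifC(x)}(\bar w)-x\mid \bar w-x\rangle\le 0$.

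The key intermediate step is to establish $\bar z\in \multifC(x)$ for every $x\in \hatD$. The inclusion $\bar z\in H(\bar w,x)$ is a direct consequence of $\hatD\subset\setD\subset\bar{\mathbb B}(\frac{\bar w+\bar z}{2},\frac{\|\bar w-\bar z\|}{2})$ together with the equivalence \eqref{fact:subset_ball}. For $\bar z\in H(x,\opert x)$, I would rewrite the firmly quasi-nonexpansive inequality with $y=\bar z\in\operatorname{Fix}\opert$, using the expansion $\|x-\bar z\|^2=\|\opert x-\bar z\|^2-2\langle \opert x-\bar z\mid \opert x-x\rangle+\|\opert x-x\|^2$, to obtain $\langle \opert x-\bar z\mid \opert x-x\rangle\le 0$, which is precisely $\bar z\in H(x,\opert x)$. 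Once $\bar z\in \multifC(x)$ is secured, condition \ref{assumption:A2:proj} follows by plugging $z=\bar z$ into the variational inequality characterizing $y:=P_{\multifC(x)}(\bar w)$: one obtains $\langle \bar w-y\mid \bar z-y\rangle\le 0$, which by \eqref{fact:subset_ball} says $y\in\setD$.

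What remains is the equivalence $P_{\multifC(x)}(\bar w)=x\Leftrightarrow x=\bar z$ in \ref{assumption:A1:proj}. For the reverse implication, if $x=\bar z$ then $\opert x=\bar z$, so $H(x,\opert x)=\hilbertX$ and $\multifC(\bar z)=H(\bar w,\bar z)$; a direct one-line projection formula onto this tangent half-space returns $\bar z$. For the forward implication, $P_{\multifC(x)}(\bar w)=x$ forces $x\in\multifC(x)\subseteq H(x,\opert x)$, and $x\in H(x,\opert x)$ means $\|x-\opert x\|^2\le 0$, so $x\in\operatorname{Fix}\opert$. Since $\bar z=P_{\operatorname{Fix}\opert}(\bar w)$, the characterization of metric projection gives $x\in H(\bar w,\bar z)$. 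The delicate point, and the main geometric obstacle, is to conclude $x=\bar z$ from $x\in\setD\cap H(\bar w,\bar z)$: I would add the two inequalities $\langle x-\bar z\mid \bar w-\bar z\rangle\le 0$ (from $x\in H(\bar w,\bar z)$) and $\langle \bar z-x\mid \bar w-x\rangle\le 0$ (from $x\in\setD$ via \eqref{fact:subset_ball}), which telescope to $\|x-\bar z\|^2\le 0$. This last cancellation is the quantitative form of the tangency of the hyperplane $\partial H(\bar w,\bar z)$ to the ball $\setD$ at $\bar z$, and is the step where signs need to be handled with most care.
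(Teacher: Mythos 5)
Your proposal is correct and follows essentially the same route as the paper: verify (D$'$) from closedness/convexity of the two half-spaces, (C$'$) from $\multifC(x)\subseteq H(\bar w,x)$, (B$'$) from $\bar z\in\multifC(x)$ plus the projection's variational inequality combined with \eqref{fact:subset_ball}, and (A$'$) by reducing to $x\in\operatorname{Fix}\opert\cap\hatD\iff x=\bar z$. The only difference is one of detail: where the paper invokes the characterization $\operatorname{Fix}\opert=\bigcap_{x}H(x,\opert x)$ from \cite[Corollary 4.25]{MR3616647} and states the uniqueness of the stationary point without computation, you derive the inclusion $\bar z\in H(x,\opert x)$ directly from the firmly quasinonexpansive inequality and carry out the tangency cancellation $\|x-\bar z\|^2\le 0$ explicitly, both of which are correct.
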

\begin{proof}

	By \cite[Corollary 4.25]{MR3616647}, we have 
	\begin{align}\label{assumption:A1:proj:fquasinonexpansive}
	\begin{aligned}
	\operatorname{Fix}\, \opert &= \bigcap_{ x \in \hilbertX} \{ y \in \hilbertX \mid \langle y - \opert x \mid x-\opert x\rangle \leq 0      
	&=\bigcap_{ x \in \hilbertX} H(x,\opert x).
	\end{aligned}
	\end{align}
	% For the multifunction $\multifC$, defined as  $\multifC(x):=H(\bar{w},x)\cap H(x,\opert x)$ assumptions \ref{assumption:A1:proj}, \ref{assumption:A2:proj} \ref{assumption:A3:proj} , \ref{assumption:A4:proj} hold. 
	
	% Let us now discuss assumptions  \ref{assumption:A1:proj}, \ref{assumption:A2:proj},  \ref{assumption:A3:proj}, \ref{assumption:A4:proj} for the system \eqref{system:PDS} with $\multifC:\ \setD \rightarrow \hilbertX$ defined as  $\multifC(x):=H(\bar{w},x)\cap H(x,\opert x)$. 
	Assumption \ref{assumption:A1:proj} follows from \eqref{assumption:A1:proj:fquasinonexpansive} i.e., $\operatorname{Fix}\, \opert \ni \bar{z}\in H(x,\opert x)$ for all $x\in \hatD$ and $x\in \operatorname{Fix}\, \opert \cap \hatD \iff x = \bar{z}$.
	Assumption \ref{assumption:A2:proj} follows from fact that for any $x\in \hatD\subset \setD$, $\bar{z}\in \multifC(x)$, hence $\bar{z}\in H(\bar{w},P_{\multifC(x)}(\bar{w}))$ and therefore $P_{\multifC(x)}(\bar{w})\in \mathbb{B}(\frac{\bar{w}+\bar{z}}{2}, \frac{\|\bar{w}-\bar{z}\|}{2})=\setD$ (see  \eqref{fact:subset_ball}). Assumption \ref{assumption:A3:proj} follows from fact that for any $x\in \setD$, $\multifC(x)\subset H(\bar{w},x)$, hence
	$P_{\multifC(x)}(\bar{w})\in H(\bar{w},x)$. Assumption \ref{assumption:A4:proj} is satisfied since $H(\bar{w},x)\cap H(x,\opert x)$ is closed, convex.  \end{proof}
% Consequently,  in the case where $F(x):=P_{\multifC(x)}(\bar{w})-x=Q(\bar{w},x,\opert x)-x,$ we obtain the  dynamical systems of the form~\eqref{dynamics:DS}.
%related to different solution schemes 
Depending upon the choice of the operator $\opert$ in Proposition \ref{proposition:opertPDS} we obtain dynamical systems of the form \eqref{system:PDS} related to different algorithms. %Let us note that for such defined multifunction, assumptions \ref{assumption:A1:proj}, \ref{assumption:A3:proj} , \ref{assumption:A4:proj} hold.
Within our  approach we encompass the following dynamical systems related to the following algorithms.

\begin{enumerate}[label={Ex \arabic*.}]
	\item When $\opert:\ \hilbertX \rightarrow \hilbertX$ is firmly quasinonexpansive and $(Id-\opert)$ is demiclosed at $0$,   dynamical system \eqref{dynamics:DS} corresponds to the best approximation algorithm for finding a point $\bar{z}$ from the set of fixed points of $\opert$, i.e.,
	for finding $\bar{z}\in \hilbertX$ such that 
	%$z\in \operatorname{Fix}\, \opert$ and
	$\bar{z}=P_{\operatorname{Fix}\opert}(\bar{w})$ (see \cite[Theorem  30.8]{MR3616647}).
	
	\item When $\opert=J_{A}$, where $A:\hilbertX \rightrightarrows \hilbertX$ is maximally monotone,  dynamical system \eqref{dynamics:DS} corresponds   to the best approximation algorithm for finding $x\in \hilbertX$ such that $0\in Ax$ (see \cite[Corollary 30.11]{MR3616647}). Let us recall that resolvent operator of $A$ is defined as  $J_{A}:\ \hilbertX \rightarrow \hilbertX$,
	%is the resolvent of operator $A$,
	$J_{A}=(Id-A)^{-1}$.
	
	\item  When $\opert=(1/2)(Id+J_{\gamma A}\circ (Id-\gamma B))$, $A:\hilbertX \rightrightarrows \hilbertX$ is maximally monotone, $B:\ \hilbertX \rightarrow \hilbertX$ is $\beta$-cocoercive, $\gamma \in [0,2\beta]$,  dynamical system \eqref{dynamics:DS} corresponds   to
	the best approximation algorithm  for finding  $x\in \hilbertX$ such that  $0\in Ax+Bx$ (see \cite[Corollary 30.12]{MR3616647}).
	
	\item When $\opert:\hilbertH\times\hilbertG\rightarrow \hilbertH\times\hilbertG$ is defined as 
	% 	For instance, in the primal-dual iterative scheme for finding zeros of sum of maximally monotone operators (see \cite{Alotaibi_2015_best}), the operator $\opert:\hilbertH\times\hilbertG\to\hilbertH\times\hilbertG$ is defined as
	\begin{align} 
	\label{operatort} 
	\begin{aligned}
	\opert(x)=P_{H(x)}(x),\  H(x):=\{ h \in \hilbertH\times \hilbertG \mid \langle  h \mid s^*(x)\rangle \leq \eta(x) \},
	\end{aligned}
	\end{align}
	and, for any 
	$x=(p,v^*)\in \hilbertH\times \hilbertG$,
	\begin{align}\label{block}
	\left.
	\begin{aligned}
	& s^*(x) := (a^*(x) + L^*b^*(x), b(x) - La(x)); \\ %\
	&\eta(x) :=\langle a(x) \mid a^*(x) \rangle + \langle b(x) \mid b^*(x) \rangle ;\\
	& a(x) := J_{\gamma A} (p - \gamma L^* v^*),\quad b(x) := J_{\mu B} (L p + \mu v^*) ;  \\
	& a^*(x) := {}^{\gamma}A(p - \gamma L^* v^*),\quad b^*(x) := {}^{\mu}{B}(Lp+ \mu v^*),\ \gamma,\mu \in (0,1),
	%;\\
	%& A: \hilbertH \rightarrow \hilbertH, \ B:\ \hilbertG \rightarrow \hilbertG - \text{maximally monotone operators};\\
	%& L: \hilbertH \rightarrow \hilbertG - \text{bounded, linear operator};
	\end{aligned}
	\right\}
	\end{align}
	% 
	% and, for any $\gamma>0$, ${}^\gamma A:\hilbertH\to\hilbertH$ is Yosida approximation of $A$,  ${}^\gamma A  = \frac{1}{\gamma} (Id - J_{\gamma}A)$, 
	% 	in \eqref{block}, 
	dynamical system \eqref{dynamics:DS} corresponds   to
	the best approximation algorithm  for finding  $(p,v^*)\in \hilbertH\times \hilbertG$ such that
	\begin{align*}
	0\in Ap+B(Lp)  \quad \text{and}\quad 0\in -LA^{-1}(-Lv^*)+B^{-1}v^*
	\end{align*}
	(see \cite{Alotaibi_2015_best}). Let us recall that for any $\gamma>0$, ${}^\gamma A:\hilbertH\to\hilbertH$ is Yosida approximation of $A$,  ${}^\gamma A  = \frac{1}{\gamma} (Id - J_{\gamma}A)$. 
\end{enumerate}

For other multifunctions $\multifC$ and other properties of projections onto moving sets, see, e.g., \cite[Theorem 3.1]{MR935276},  \cite[Theorem 3.10]{MR358181}, \cite[Theorem 2.1]{MR1354777},  \cite[Proposition 5.2]{full_lipschitzian_and_holderian_stability_Mordukhovich} and   \cite[Example 6.4]{full_stability_of_general_parametric_variational_systems}.\\[1cm]
%\cite{MR2034199} \cite{lectures}

\bibliographystyle{plain}      % mathematics and physical sciences

\bibliography{bibliography_ENG}   % name your BibTeX data base

\section{Appendix - auxiliary Lemmas}

\begin{remark}\label{remark:setD_ball_bounded}
	Fact \ref{fact:subset_ball} implies that $\setD\subset \bar{\mathbb{B}}(\frac{\bar{w}+\bar{z}}{2},\frac{\|\bar{w}-\bar{z}\|}{2})$, hence $\setD$ is bounded. Moreover, this easily implies
	\begin{equation*}
	\|\bar{w}-\bar{z}\|=d:=\sup_{x,y\in \setD } \|y-x\|,
	\end{equation*}
	that is, the pair $\bar{z}$, $\bar{w}$ realizes maximal distance between two points in $\setD$ (the diameter of $\setD$).
\end{remark}

% We also consider the  Cauchy problem with the initial condition at any moment $t_{0}>0$
% \begin{align}\tag{DS-approx2}\label{dynamics:DS_2}
% \begin{aligned}
% & \dot{x}(t)=F(x(t)),\quad \text{ }t\geq t_0> 0,\\
% & x(t_0)=x_0\in \hat{\setD}\setminus \{ \bar{z} \}. 
% \end{aligned}	
% \end{align}
\begin{lemma}\label{lemma:continuous}
	Let $x(\cdot)\in C([a,b],\hat{\setD})$, $[a,b]\subset \mathbb{R}_{+}$. Then the function $f(t):=F(x(t))$ is continuous: $f(\cdot)\in C([a,b],\hilbertX)$
\end{lemma}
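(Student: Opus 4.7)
The plan is to exploit the fact that $f = F \circ x$ is a composition of two continuous maps and conclude continuity on $[a,b]$ by the standard $\varepsilon$-$\delta$ argument. The key hypothesis from the paper's setup for \eqref{dynamics:DS} is that $F \colon \hat{\setD} \to \hilbertX$ is continuous on the whole of $\hat{\setD}$ (not merely locally Lipschitz on $\hat{\setD}\setminus\{\bar z\}$). Combined with the assumption $x(\cdot) \in C([a,b],\hat{\setD})$, the result is immediate.

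Concretely, I would fix an arbitrary $t_\ast \in [a,b]$ and argue pointwise continuity of $f$ at $t_\ast$. Given $\varepsilon>0$, by continuity of $F$ at the point $x(t_\ast) \in \hat{\setD}$ there exists $\delta>0$ such that for every $y \in \hat{\setD}$ with $\|y - x(t_\ast)\|_{\hilbertX} < \delta$ one has $\|F(y) - F(x(t_\ast))\|_{\hilbertX} < \varepsilon$. Then by continuity of $x(\cdot) \colon [a,b] \to \hat{\setD}$ at $t_\ast$ there exists $\eta>0$ such that $|t - t_\ast| < \eta$, $t \in [a,b]$, implies $\|x(t) - x(t_\ast)\|_{\hilbertX} < \delta$. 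Since $x(t) \in \hat{\setD}$ by hypothesis, the two estimates combine to yield
\[
\|f(t) - f(t_\ast)\|_{\hilbertX} \;=\; \|F(x(t)) - F(x(t_\ast))\|_{\hilbertX} \;<\; \varepsilon
\]
for all $t \in [a,b]$ with $|t-t_\ast|<\eta$. Since $t_\ast$ was arbitrary, $f \in C([a,b],\hilbertX)$.

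There is no real obstacle here; the only thing worth flagging is that one must use continuity of $F$ on all of $\hat{\setD}$ (including the point $\bar z$), because $x(\cdot)$ could a priori pass through $\bar z$; invoking only local Lipschitz continuity on $\hat{\setD}\setminus\{\bar z\}$ would not suffice. That stronger continuity is, however, explicitly part of the standing hypotheses introduced with \eqref{dynamics:DS}, so the argument above goes through without modification.
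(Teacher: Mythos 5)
Your argument is correct and complete: it is the standard $\varepsilon$-$\delta$ proof that a composition of continuous maps is continuous, correctly invoking the standing hypothesis for \eqref{dynamics:DS} that $F$ is continuous on all of $\hat{\setD}$ (not just Lipschitz on $\hat{\setD}\setminus\{\bar z\}$), and your remark about why the weaker local Lipschitz property alone would not suffice is apt. The paper in fact states Lemma~\ref{lemma:continuous} without proof, treating it as immediate, so your write-up supplies exactly the argument the authors leave implicit.
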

%\begin{proof}
%	Since $x(t)$ is a continuous function mapping from $[a,b]$ to $\hat{\setD}$ and $F$ is continuous on $\hat{\setD}$, function $f:\ [a,b]\rightarrow \hilbertX$ is continuous as a composition of continuous functions.  
%\end{proof}

\begin{lemma}(about extendibility%, see Lecture 1
	)\label{lemma:prolongation}
	Let $x(t)$ be defined and differentiable in a continuous way in left-sided neighbourhood of $t_0$, i.e.
	\begin{equation}\label{eq_9}
	x(\cdot)\in C^1((t_0-\gamma,t_0),\hat{\setD})
	\end{equation}
	and assume that the limit
	\begin{equation}\label{eq_10}
	x_1:=\lim_{t\rightarrow t_0{-}} \dot{x}(t)
	\end{equation}
	exists and $x_1\in F(\hat{\setD})$. Then
	\begin{enumerate}
		\item $x(t)$ is extendable in a continuous way to function $\tilde{x}(\cdot)\in  C^1((t_0-\gamma,t_0],\hat{\setD}) $
		\item $\dot{\tilde{x}}_\ell = x_1$ (where $\ell$ denotes the left derivative of $x(\cdot)$ at $t_0$)
	\end{enumerate}
\end{lemma}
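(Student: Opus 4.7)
The plan is to construct the extension by defining $\tilde{x}(t_{0}):=\lim_{t\to t_{0}^{-}}x(t)$, and then to verify the three claims: this limit exists, it lies in $\hat{\setD}$, and the left derivative of $\tilde{x}$ at $t_{0}$ equals $x_{1}$.

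First I would show that the limit of $x(t)$ as $t\to t_{0}^{-}$ exists. By \eqref{eq_10}, $\dot{x}(t)$ converges to $x_{1}$, hence there is some $\delta\in(0,\gamma)$ and $K>0$ such that $\|\dot{x}(t)\|\le K$ for all $t\in(t_{0}-\delta,t_{0})$. From the fundamental theorem of calculus applied on $(t_{0}-\gamma,t_{0})$ (valid by \eqref{eq_9}), one obtains
\begin{equation*}
\|x(t)-x(s)\|=\Bigl\|\int_{s}^{t}\dot{x}(u)\,du\Bigr\|\le K|t-s|,\quad s,t\in(t_{0}-\delta,t_{0}),
\end{equation*}
so $x(\cdot)$ is Cauchy as $t\to t_{0}^{-}$. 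Completeness of $\hilbertX$ yields the existence of $\tilde{x}(t_{0}):=\lim_{t\to t_{0}^{-}}x(t)$. Since $x(t)\in\hat{\setD}$ for $t\in(t_{0}-\gamma,t_{0})$ and $\hat{\setD}$ is closed in $\hilbertX$, it follows that $\tilde{x}(t_{0})\in\hat{\setD}$. Defining $\tilde{x}(t):=x(t)$ for $t\in(t_{0}-\gamma,t_{0})$ gives a continuous extension $\tilde{x}(\cdot)\in C((t_{0}-\gamma,t_{0}],\hat{\setD})$.

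Next I would identify the left derivative of $\tilde{x}$ at $t_{0}$. For $t\in(t_{0}-\delta,t_{0})$ write
\begin{equation*}
\tilde{x}(t_{0})-\tilde{x}(t)=\lim_{s\to t_{0}^{-}}\bigl(x(s)-x(t)\bigr)=\lim_{s\to t_{0}^{-}}\int_{t}^{s}\dot{x}(u)\,du=\int_{t}^{t_{0}}\dot{x}(u)\,du,
\end{equation*}
where the last equality uses continuity of the integral in its upper limit together with the boundedness of $\dot{x}$. Consequently
\begin{equation*}
\Bigl\|\frac{\tilde{x}(t_{0})-\tilde{x}(t)}{t_{0}-t}-x_{1}\Bigr\|=\Bigl\|\frac{1}{t_{0}-t}\int_{t}^{t_{0}}\bigl(\dot{x}(u)-x_{1}\bigr)\,du\Bigr\|\le\sup_{u\in[t,t_{0})}\|\dot{x}(u)-x_{1}\|,
\end{equation*}
and the right-hand side tends to $0$ as $t\to t_{0}^{-}$ by \eqref{eq_10}. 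Thus $\dot{\tilde{x}}_{\ell}(t_{0})$ exists and equals $x_{1}$.

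Finally, for the $C^{1}$ regularity of $\tilde{x}$ on $(t_{0}-\gamma,t_{0}]$, I would extend the derivative by setting $\dot{\tilde{x}}(t_{0}):=x_{1}$; continuity from the left at $t_{0}$ holds by \eqref{eq_10}, while on the open part the derivative coincides with the (continuous) derivative of $x$ by \eqref{eq_9}. The hypothesis $x_{1}\in F(\hat{\setD})$ is used only to assert that this left derivative is an admissible value of the vector field (it will match $F(\tilde{x}(t_{0}))$ in the application within Theorem~\ref{proposition_prolongation} via Lemma~\ref{lemma:continuous}). The main technical step is the passage to the limit inside the integral to obtain the identity $\tilde{x}(t_{0})-\tilde{x}(t)=\int_{t}^{t_{0}}\dot{x}(u)\,du$; once this is in place, everything else is straightforward estimation.
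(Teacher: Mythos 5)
Your proof is correct and follows essentially the same route as the paper's: boundedness of $\dot{x}$ near $t_0$ gives a Lipschitz bound and hence the Cauchy property for $x(t)$, the Newton--Leibniz identity is pushed to the closed interval by a limit argument, and the left derivative is identified as $x_1$ (the paper does this last step via the auxiliary continuous extension $z$ of the derivative and differentiation of the integral with respect to its upper limit, whereas you estimate the difference quotient directly by $\sup_{u\in[t,t_0)}\|\dot{x}(u)-x_1\|$ --- a cosmetic difference). Your explicit remark that $\tilde{x}(t_0)\in\hat{\setD}$ by closedness of $\hat{\setD}$ is a detail the paper leaves implicit.
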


\begin{proof}[ of Lemma~\ref{lemma:prolongation}]
	It follows from the existence of the left-hand limit that  the derivative is bounded in some left-sided half-neighbourhood of $t_0$:
	\begin{equation} 
	\label{eq_11}
	\exists \zeta\in(0,\gamma],\ \exists L>0\ \forall t\in (t_{0}-\zeta, t_{0})\ \|\dot{x}(t)\|\le L.
	\end{equation}

	By the weakened formula for finite increments, we obtain Lipschitz continuity of the function $x(t)$ on $(t_{0}-\zeta, t_{0})$ with some constant $L$. Therefore, for the function $ x (t) $, the Cauchy condition for the existence of the left derivative at time $t_0$ is satisfied, and
	%Therefore, for function $x(t)$	the Cauchy condition is satisfied of existence left-sided derivative at $t_{0}$ and 
	\begin{equation} 
	\label{eq_12}
	\exists\ x_{0}=\lim_{t\rightarrow t_{0}{-}} x(t).
	\end{equation}
	Put
	$$
	\tilde{x}(t)=\left\{\begin{array}{ll}
	x(t),& t\in (t_{0}-\gamma, t_{0});\\
	x_{0}& t=t_{0}.
	\end{array}
	\right.
	$$
	It is obvious that, the function constructed in this way is continuous on $(t_{0}-\zeta, t_{0}]$. Now, it is enough to show that $\dot{\tilde{x}}_\ell(t_{0})=x_{1}$, i.e. 
	$$
	\lim_{t\rightarrow t_{0}^{-}}\frac{1}{t-t_{0}}(x(t)-x_{0})=x_{1},
	$$
	or
	$$
	\lim_{\Delta t\rightarrow 0}\frac{1}{\Delta t}(x_{0}-x(t_{0}-\Delta t))=x_{1}.
	$$
	To use the formula of Newton-Leibniz % in this formulation,
	we introduce a function
	$$
	z(t)=\left\{\begin{array}{ll}
	x'(t),& t\in (t_{0}-\gamma, t_{0});\\
	x_{1}& t=t_{0}.
	\end{array}
	\right.
	$$
	By  \eqref{eq_9} and \eqref{eq_10}, function $z(t)$ is continuous on $(t_{0}-\zeta, t_{0}]$. We cannot yet claim that  $\dot{\tilde{x}}(t_{0})=x_{1}$, our aim is  to prove it.
	%By now we can claim that $\tilde{x}'(t_{0})=x_{1}$; our aim - show it.)
	
	For any $\delta\in(0,\zeta)$ we can rewrite the formula of  Newton-Leibniz as
	\begin{equation} 
	\label{eq_13}
	\int_{t_{0}-\Delta t}^{t_{0}-\delta} z(t) dt=x(t_{0}-\delta)-x(t_{0}-\Delta t).
	\end{equation}
	
	We take the limit with $\delta $ tending to zero. Then on the one hand, $x(t_{0}-\delta)\rightarrow x_{0}$ (see \eqref{eq_12}). On the other hand,
	$$\int_{t_{0}-\Delta t}^{t_{0}-\delta} z(t) dt \rightarrow \int_{t_{0}-\Delta t}^{t_{0}} z(t)\ dt,
	$$
	because
	$$
	\|\int_{t_{0}-\Delta t}^{t_{0}} z(t) dt-\int_{t_{0}-\Delta t}^{t_{0}-\delta} z(t) dt\|=\|\int_{t_{0}-\delta}^{t_{0}} z(t) dt\|\le \int_{t_{0}-\delta}^{t_{0}}\|z(t)\| dt\le \delta L\rightarrow 0.
	$$
	Here, we used continuity of $z(t)$, estimation \eqref{eq_11} and, from fact \eqref{eq_10} with \eqref{eq_10}, estimation $\|z(t_{0})\|\le L$.
	
	Taking the limit on both sides in \eqref{eq_13} we obtain
	\begin{equation} 
	\label{eq_14}
	\int_{t_{0}-\Delta t}^{t_{0}} z(t) dt = x_{0}-x(t_{0}-\Delta t).
	\end{equation}
	Then from \eqref{eq_13} and \eqref{eq_14} we have
	$$
	\int_{t_{0}-\Delta t}^{t}-\tilde{x}(t)-\tilde(t_{0}-\Delta t)
	$$
	for all $t\in[t_{0}-\Delta t, t_{0}]$, where the function inside the integral is continuous. 
	Now applying at the point $t = t_0$ the theorem on differentiation of the integral with respect to the upper limit, we obtain  
	%Now, by using in a point $t=t_{0}$ theorem of differentiation of integral with respect to the upper limit of integration we obtain (property 7.) we get
	$$
	\dot{\tilde{x}}_{\ell}(t_{0})=z(t_{0})=x_{1},
	$$ 
	as required.  
	%	which was about to show.
\end{proof}

\begin{lemma}\label{Lemma:limit_Cauchy}
	Let $x(t)$ be a Lipschtiz function on $(a,b)$, $a,b\in \mathbb{R}$ with Lipschitz constant $L$ and values in Hilbert space $\hilbertX$. Then the limit $\lim_{t\rightarrow b^-} x(t)$ exists.  
\end{lemma}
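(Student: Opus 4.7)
The plan is to apply the Cauchy criterion together with completeness of the Hilbert space $\hilbertX$. The key observation is that Lipschitz continuity of $x(\cdot)$ transports the Cauchy property of real sequences tending to $b^{-}$ into the Cauchy property of their images in $\hilbertX$.

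First, I would take an arbitrary sequence $\{t_n\}_{n\in\mathbb{N}}\subset (a,b)$ with $t_n\to b^{-}$. Since $\{t_n\}$ converges in $\mathbb{R}$, it is Cauchy, so for every $\varepsilon>0$ there exists $N\in\mathbb{N}$ such that $|t_n-t_m|<\varepsilon/L$ for $n,m\geq N$. The Lipschitz bound then gives
\[
\|x(t_n)-x(t_m)\|\leq L|t_n-t_m|<\varepsilon,\qquad n,m\geq N,
\]
so $\{x(t_n)\}$ is Cauchy in $\hilbertX$. By completeness of $\hilbertX$, there exists $y\in \hilbertX$ with $x(t_n)\to y$.

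Next, I would verify that $y$ is independent of the chosen sequence. Given a second sequence $\{s_n\}\subset(a,b)$ with $s_n\to b^{-}$, the same argument produces a limit $y'\in\hilbertX$. Interleaving the two sequences into $u_{2k-1}:=t_k$, $u_{2k}:=s_k$ yields another sequence in $(a,b)$ with $u_n\to b^{-}$, whose image is again Cauchy, hence convergent to a single limit in $\hilbertX$; since both $\{x(t_n)\}$ and $\{x(s_n)\}$ are subsequences of $\{x(u_n)\}$, one concludes $y=y'$.

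Finally, to promote sequential convergence to the full one-sided limit, I would fix $\varepsilon>0$, pick $n$ large enough that $\|x(t_n)-y\|<\varepsilon/2$ and $|t_n-b|<\varepsilon/(2L)$, and set $\delta:=\varepsilon/(2L)$; then for any $t\in(a,b)$ with $|t-b|<\delta$, Lipschitz continuity gives
\[
\|x(t)-y\|\leq \|x(t)-x(t_n)\|+\|x(t_n)-y\|\leq L|t-t_n|+\varepsilon/2<\varepsilon.
\]
No real obstacle arises; the only point requiring care is that convergence of sequences does not a priori imply existence of the one-sided limit in the Weierstrass sense, which is why the interleaving argument and the final $\varepsilon$-$\delta$ step are needed.
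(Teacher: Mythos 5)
Your proposal is correct and follows essentially the same route as the paper: both deduce from the Lipschitz bound that $x(\cdot)$ satisfies the Cauchy condition at $b^-$ and then invoke completeness of $\hilbertX$. The only difference is that you spell out the passage from the Cauchy property to the existence of the limit via sequences and interleaving (which is in fact already guaranteed by the sequential characterization of limits in metric spaces), whereas the paper takes this standard implication for granted.
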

\begin{proof}
	It is enough to show that $x(t)$ has the Cauchy property at $b^-$, in the sense that
	\begin{align}\label{condition:Cauchy}
	\begin{aligned}
	&\forall \varepsilon>0\ \exists \delta>0 \ \forall t_1,t_2\in (a,b) \\
	& b-t_1<\delta \ \wedge b-t_2<\delta \implies \|x(t_1)-x(t_2)\|<\varepsilon.
	\end{aligned}
	\end{align}
	Since $x(t)$ is Lipschitz on $(a,b)$ we have
	\begin{equation*}
	\forall t_1,t_2 \in (a,b) \quad \|x(t_1)-x(t_2)\|< L |t_1-t_2|
	\end{equation*}
	Let us take any $\varepsilon >0$ and $\delta = \frac{\varepsilon}{2L}$. Then for any $t_1,t_2$, $0<b-t_1<\delta$, $0<b-t_2<\delta$ we have
	\begin{equation*}
	\|x(t_1)-x(t_2)|< L |t_1-t_2|< L (|b-t_1|+|b-t_2|)< \varepsilon,
	\end{equation*}
	which proves \eqref{condition:Cauchy}.  
\end{proof}
\begin{lemma}\label{inequaltiy:triangle_ball}	
	% Under assumptions (C\ref{cond:C2}), (C\ref{cond:C3}) we have that 
	For all $x\in \setD$ we have
	\begin{equation*}
	\|x-\bar{z}\|^2\leq \|\bar{w}-\bar{z}\|^2 - \|\bar{w}-x\|^2.
	\end{equation*}
\end{lemma}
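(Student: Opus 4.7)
The plan is to derive the inequality directly from the assumption \eqref{inequality:setD} by expanding $\|\bar{w}-\bar{z}\|^{2}$ around the point $x$.

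First, I would write
\begin{equation*}
\bar{w}-\bar{z} = (\bar{w}-x) - (\bar{z}-x)
\end{equation*}
and expand the squared norm, obtaining
\begin{equation*}
\|\bar{w}-\bar{z}\|^{2} = \|\bar{w}-x\|^{2} + \|\bar{z}-x\|^{2} - 2\langle \bar{w}-x \mid \bar{z}-x\rangle.
\end{equation*}
Rearranging yields
\begin{equation*}
\|\bar{w}-\bar{z}\|^{2} - \|\bar{w}-x\|^{2} - \|x-\bar{z}\|^{2} = -2\langle \bar{z}-x \mid \bar{w}-x\rangle.
\end{equation*}

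Next, I would invoke the standing assumption \eqref{inequality:setD}, which asserts that $\langle \bar{z}-x \mid \bar{w}-x \rangle \leq 0$ for every $x \in \setD$. This makes the right-hand side above nonnegative, and the desired inequality
\begin{equation*}
\|x-\bar{z}\|^{2} \leq \|\bar{w}-\bar{z}\|^{2} - \|\bar{w}-x\|^{2}
\end{equation*}
follows immediately. There is no real obstacle here; the statement is essentially a restatement of the ball characterization \eqref{fact:subset_ball} (which says $x\in \setD \subset \bar{\mathbb{B}}(\tfrac{\bar{w}+\bar{z}}{2},\tfrac{\|\bar{w}-\bar{z}\|}{2})$ is equivalent to the inner-product condition used above), combined with a single polarization identity.
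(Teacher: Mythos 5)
Your proof is correct and follows essentially the same route as the paper's: both expand $\|\bar{w}-\bar{z}\|^{2}$ via the decomposition through $x$ and then discard the cross term $-2\langle \bar{z}-x\mid \bar{w}-x\rangle\geq 0$ using \eqref{inequality:setD}. No issues.
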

\begin{proof}
	% 	By \eqref{inequality:setD} we have that for all $x\in \setD$
	% 	\begin{equation*}
	% 	\langle \bar{w}-x \mid x-\bar{z} \rangle = -\langle \bar{z}-x \mid \bar{w}-x \rangle \geq 0.
	% 	\end{equation*}
	This  follows from \eqref{inequality:setD}:  we have
	\begin{equation*}%\label{ineq:in_set_ball}
	\|\bar{w}-\bar{z}\|^2=\|\bar{w}-x\|^2+2\langle \bar{w}-x \mid x-\bar{z} \rangle + \|x-\bar{z}\|^2\geq \|\bar{w}-x\|^2+ \|x-\bar{z}\|^2,
	\end{equation*}
	for all $x\in \setD$.   
	% 	which gives the conclusion.
	%Hence, for all $x\in \setD$
	% 	\begin{equation*}
	%$\|x-\bar{z}\|^2\leq \|\bar{w}-\bar{z}\|^2 - \|\bar{w}-x\|^2.$
	% 	\end{equation*}
\end{proof}

\end{document}